\def\reals{{{\rm l} \kern-.15em {\rm R}}}
\definecolor{purpleheart}{rgb}{0.41, 0.21, 0.6}
\definecolor{dgreen}{rgb}{0.0, 0.5, 0.0}
\newcommand\tb{}
\newcommand\tr{}
\newcommand\tp{}
\newcommand\tg{}
\newcommand\bx{\mathbf{x}}
\newcommand\by{\mathbf{y}}
\newcommand\bc{\mathbf{c}}
\newcommand\bk{\mathbf{k}}
\newcommand\normk{\| \bk \|}
\newcommand\ei{\mathbf{e}_i}
\newcommand\be{\begin{equation}}
\newcommand\ee{\end{equation}}
\newtheorem{theorem}{Theorem}[section]
\newtheorem{lemma}[theorem]{Lemma}
\newcommand\slayer{\mathcal{S}}
\newcommand\dlayer{\mathcal{D}}
\newcommand\dlayloc{\dlayer_L}
\newcommand\dlayloch{\dlayer_L^{h}}
\newcommand{\zloc}{z^{L}}
\newcommand{\zloch}{z^{L,h}}
\newcommand{\saelem}{W}
\newcommand{\nptsx}{n_{\bx_i}}
\newcommand{\setofpanels}{{\Gamma^{loc}(\bx)}}
\newcommand{\hpanel}{\bar{h}}
\newcommand{\dup}{d_{up}}
\newcommand{\dqbx}{d_{QBX}}
\newcommand\floor[1]{\lfloor#1\rfloor}
\newcommand\ceil[1]{\lceil#1\rceil}
\date{}
\title{A local target-specific QBX method for Laplace's equation in 3D
  multiply-connected domains. }
\title{A local target specific quadrature by expansion method for evaluation
of layer potentials in 3D}
\author{Michael Siegel$^1$ and Anna-Karin Tornberg$^2$\\$^1$ Department of Mathematical Sciences, New Jersey Institute of Technology, Newark, NJ 07102 (misieg@njit.edu)\\ $^2$ KTH Mathematics, Linn\'{e} Flow Centre/Swedish e-Science Research Center, 100 44 Stockholm, Sweden (akto@kth.se)}
\begin{document}

\maketitle

\begin{abstract}
Accurate evaluation of layer potentials is crucial when boundary
integral equation methods are used to solve partial differential
equations. Quadrature by expansion (QBX) is a recently introduced
method that can offer high accuracy  for singular and nearly
singular integrals, using truncated expansions to locally
represent the potential. The QBX method is typically based on a spherical harmonics expansion which when truncated at order $p$ has $O(p^2)$ terms. This expansion can equivalently be written with $p$ terms,
however paying the price that the expansion coefficients will depend
on the evaluation/target point.
Based on this observation, we develop a target specific QBX method,
and apply it to  Laplace's equation on multiply-connected
domains. The method is local in that the QBX expansions only involve
information from a neighborhood of the target point. 
An analysis of the truncation error in the QBX expansions is presented,
practical parameter choices are discussed and the method is
validated and tested on various problems.
\end{abstract}

{\bf Keywords:} Layer potentials; integral equations; quadrature by expansion; exterior Dirichlet problem; spherical harmonics expansions; \tb{multiply-connected domain}



\section{Introduction}

Numerical methods based on boundary integral equations have the
advantage that only the boundaries of the domain must be discretized,
which both simplifies the handling of the geometry and reduces
the number of discretization points. 
The resulting linear system after
discretization is however dense, and the evaluation of layer potentials requires accurate quadrature methods for singular and nearly singular integrals.  \tb{ Nearly singular integrals arise when evaluating solutions close to boundaries during a post-processing step, after the integral equation has already been solved. They also arise in problems involving multiply-connected domains, when the integral equation is to be solved and separate boundary components are nearly touching. Such problems are important in applications including, for example, electromagnetic scattering in media with multiple inclusions and particle Stokes flow.}

With a discretization based on a second kind integral equation, the
resulting matrix is well conditioned, with a condition number
independent on the fineness of the discretization.  The number of
iterations in an iterative method such as GMRES hence stays constant
as the discretization of the boundaries is refined, yielding a total
cost of $O(N^2)$ to solve the system, where $N$ is the number of
unknowns.  The $O(N^2)$ comes from the cost of the matrix-vector
multiply for a full matrix, and can be reduced to $O(N)$ or
$O(N \log N)$ using a fast method, such as the fast multipole method
(FMM) \cite{Greengard1987}, or an FFT based method such as a fast
Ewald method (commonly for periodic problems
\cite{Lindbo2011},  recently also for non-periodic ones \cite{KlintebergFree}).

Regarding efficient and accurate quadrature methods for the evaluation of singular and nearly singular integrals,  excellent methods that utilize a complex variable formulation are available in two dimensions   \cite{Barnett2015,Helsing2008,Ojala2015}. 
Considering arbitrary geometries in three dimensions, this remains a
topic of current research where several methods have been introduced and
contributed different advances
\cite{Beale2004,Bremer2012,Bremer2013,Tlupova2013,Ying2006,Zhao2010}.

Quadrature by expansion (QBX) is a rather recent method \cite{Barnett2014,
  Klockner} for the numerical evaluation of singular and nearly
singular integrals. It was introduced for the Helmholtz kernel in two
dimensions, but the central principle of the method can be generalized to other
kernels in both two and three dimensions. Noting that the layer
potential is smooth away from the boundary, it can locally be
represented using an expansion centered about a point or {\it expansion
center} which is located just off the surface. Once the coefficients of this expansion have been computed,
the potential can be evaluated at a target point closer to the
surface, or even on the boundary \cite{Epstein2013} using this local
expansion \tb{(see Figure \ref{fig_qbx})}. 
Such expansions are used also in the FMM, and it is hence attractive
to integrate the QBX method into an FMM. In \cite{Rachh2016}, a
first such step is taken in two dimensions. There, the QBX method is
``global'', meaning that all information from all boundaries will
enter each QBX local expansion before evaluation. Localizing the QBX
treatment by using only information from boundaries that are near the expansion center would reduce the cost, but introduces other algorithmical
challenges, even more so in three dimensions.

\begin{figure}
\vspace{0.in}
  \center{\includegraphics[clip=true, viewport=70 100 510 345, scale=.75]{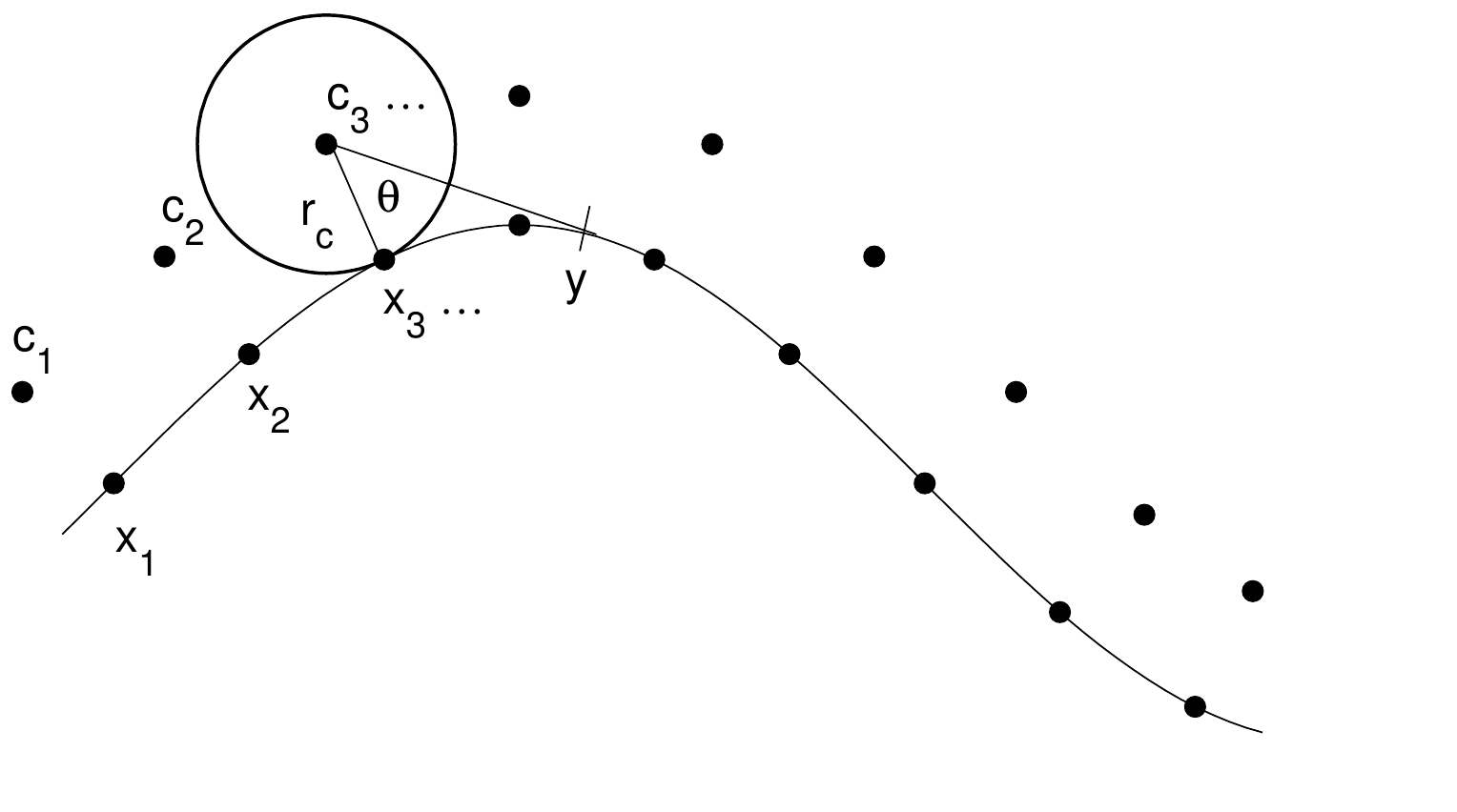}}   
  \caption{\tb{QBX  with expansion centers $\bc_1, \bc_2, \ldots$ and targets $\bx_1, \bx_2, \ldots$. The domain of convergence for the expansion has radius $r_c$, and  $\theta$ is defined in (\ref{eqn:Pn_expand}).  }} 
  \label{fig_qbx} 
\end{figure}

In \cite{Klinteberg2016}, a QBX method was presented for spheroidal
particles in three-dimensional Stokes flow. Whenever an evaluation
point is on or close to a spheroidal surface, a QBX expansion is used
to evaluate the layer potential over that surface. Contributions from
different surfaces are kept separate, even if the surfaces are
close. Once QBX centers have been chosen relative to the surface,
precomputations can be made to strongly accelerate the computation of
the QBX coefficients.  Using the axisymmetry of the body, the
storage need can be greatly reduced, and the same precomputed values
can be used for all spheroids of the same shape. 
This method is combined with an
FFT based Ewald summation method - the Spectral Ewald method
\cite{Klinteberg2014,Lindbo2010}, and yields an $O(N \log N)$ method (with $N$ the total number of
gridpoints) as the number of spheroids is increased while the
resolution on each spheroid is kept fixed.

The error in the QBX method for evaluating a layer potential on or
close to the boundary has two main sources: the error from truncation of
the local series expansion and the computation of the expansion
coefficients.
The truncation error was analyzed by Epstein et al.\ \cite{Epstein2013}
for the Laplace and Helmholtz kernels in two and three dimensions,
assuming a ``global'' QBX approach. The error due to computations of
the expansion coefficients is again a quadrature error in evaluating
the integrals defining these coefficients. This was analyzed in 
\cite{Klinteberg2016b}
using a method based on contour integration and calculus of residues in two dimensions and for some three dimensional cases. Once a procedure for taking the panel shape into account is introduced, the error estimates in 2D are remarkably precise, and allow for the development of an adaptive QBX method where parameters are selected automatically, given an error tolerance \cite{Klinteberg2017}.
  
In this paper, we consider the Laplace's equation in multiply-
connected domains in three dimensions, and we focus on the further
development of a QBX based method. In contrast to
\cite{Klinteberg2016}, we assume no specific shapes of the boundaries \tb{other than that they are smooth,}
but we will not take on the daunting task of integrating QBX into a
FMM framework for a general three dimensional problem.  Instead, we
view the QBX technique as a local correction.  We divide the surface
into panels, and only for panels close to the evaluation point will
the QBX approach be applied, for other panels the integral is well
resolved using regular quadrature. \tr{This local point of view is not new, and versions of  a local QBX method have been developed for 2D problems in \cite{Barnett2014}, \cite{RachhPhD}. } 
In extending this idea to 3D, 
it is
essential that the number of terms in the QBX expansion is as small as
possible. We however do not need a separation between source and
target in these expansions, which is essential e.g.\ for the
FMM. Therefore, we will use a {\em target specific} QBX expansion,
that will need $p$ terms to achieve the same accuracy as a spherical
harmonics expansion with $p^2$ terms.

We refer to the method that we have developed as a {\em local
  target-specific} QBX method. \tr{ Its main significance is that it can compute both singular and nearly singular integrals for general surfaces in 3D with high accuracy and $O(p\cdot N)$ complexity. } 
Even though we are considering Laplace's equation
in this paper, the method may also be extended for the accurate
evaluation of single and double layer potentials arising in other
boundary value problems, such as potential flow, Stokes flow, electromagnetics, or
elasticity.  One main motivation for developing this method is to
efficiently and accurately evaluate nearly singular integrals for a time-dependent
geometry where precomputation is not possible \tb{ (although the method presented here makes use of precomputation, it is not necessary)}. 
  This can involve interacting drops, vesicles or blood
cells, where the geometry is changing due to the movement and
deformation of these objects, and where accurate evaluation of nearly
singular integrals is needed to resolve close interactions.


The principal results justifying our use of a local QBX expansion scheme are the analyses of the two main sources of error, truncation and coefficient error,  presented in Section \ref{sect:qbxerrs}.  There, we provide new estimates for the truncation error of the series expansion  of  a local layer potential for the Laplace kernel in 3D (in contrast to the global layer potentials analyzed in \cite{Epstein2013}). We also make use of the analysis in \cite{Klinteberg2016b} to quantify the quadrature error in computing the QBX expansion coefficients. This latter error will be called the {\it coefficient error}.  Crucially, the estimates highlight the interplay between the grid size $h$, the distance $r_c$ of the expansion centers from the boundary, and the order of truncation $p$ on the accuracy of our method. The error estimates also give a rational basis for the choice of numerical parameters.  

The method presented here can be made to have $O(N)$ complexity, where $N$ is the total number of target points,  by scaling  the size of the local correction patch so that it has a constant number of source points as $N$ grows. Optimal complexity is then achieved by combining our local QBX method with a fast hierarchical method, such as the Fast Multipole Method, to compute the contribution to the layer potential from source points that are outside of the local correction patch \tb{ (see \S \ref{sect:full_alg})}.  One advantage of this approach is that by  decoupling the FMM from the QBX expansions the algorithm \tb{allows for the use of pre-existing or standard FMM software,} and is greatly simplified.  We will see that for this scaling, the coefficient and truncation errors remain fixed as $N$ increases, but can be made controllably small. Used in this way our QBX method is not classically convergent, but has controlled precision \cite{Klockner}.  A classically convergent scheme can be achieved by letting the number of local patch points grow (even slowly)  with $N$, at the expense of optimal complexity.   

The rest of this paper is as follows. We start by giving the
problem statement and the integral equations in \S \ref{sect:inteq}. Surprisingly enough, we
could not find a derivation in the literature of a uniquely solvable second kind integral equation for
the multiply-connected external Dirichlet problem in three dimensions, and we will here
provide a brief derivation. In \S \ref{sect:direct_quad}, we
introduce the surface discretization and the regular quadrature rules,
before we describe the local target specific QBX expansions in
\S \ref{sect:localexp}. Results from our error analysis are presented in \S \ref{sect:qbxerrs}. A
description of the full algorithm is given in \S \ref{sect:local_qbx_alg},
before we turn to presenting the numerical results in \S \ref{sec:results}.


\section{Problem statement, layer potentials and the integral equations}
\label{sect:inteq}

\tp{
Let $D_1, \ldots, D_M$ be a collection of disjoint, bounded, and  open
regions in $\mathbb{R}^3$, each with $C^2$  connected boundary
$\partial D_j$, and let $D=D_1 \cup \ldots D_M$.  
We consider the  
Dirichlet problem for Laplace's equation
\begin{eqnarray}
 \nabla^2 u &=& 0~~~\mbox{in the interior or exterior of } D\\
u &=& f ~~\mbox{on} ~~\partial D
\end{eqnarray}
for continuous boundary data $f$.  For the exterior problem, it is required that $u(\bx) \rightarrow 0$ uniformly in all directions.  }   



It is well known that each of these boundary value problems has a
unique solution which depends continuously on the boundary data.  We
now want to formulate the integral equations for solving both the
interior and exterior Dirichlet problem. We wish to have a second kind
integral equation which will yield a well-conditioned discrete
problem. This is straight forward for the interior problem, but much
less so for the exterior problem, and we therefore start by presenting
this formulation.  
In doing so, we will need to introduce layer potentials and  jump relations \cite{Kress}.

Let 
\[
G(\bx,\by)=\frac{1}{4 \pi} \frac{1}{|\bx-\by|}
\]
be the fundamental solution or free-space Green's function for Laplace's equation in $\mathbb{R}^3$.
Given a function $\sigma$ which is continuous on the boundary  $\partial D$ of a region $D$, the functions
\be  \label{single_layer}
u(\bx)=\slayer \sigma(\bx) = \int_{\partial D} \sigma(\by) G(\bx,\by) \ dS_{\by}, 
~~~\bx \in \mathbb{R}^3 \backslash \partial D,
\ee
and
\be \label{double_layer}
v(\bx)=\dlayer \sigma(\bx) = \int_{\partial D} \sigma(\by)  \frac{\partial G(\bx,\by)}{\partial \nu(\by)} \ dS_{\by},
~~~\bx \in \mathbb{R}^3 \backslash \partial D,
\ee
are called, respectively, the single layer and double layer potential with density $\sigma$. In the above, $\nu(\bx)$ is the outward normal at a point $\bx \in \partial D$, that is, pointing into the exterior domain $\mathbb{R}^3 \backslash \bar{D}$.

The single and double layer potentials represent harmonic functions
in $D$ and $\mathbb{R}^3 \backslash \bar{D}$. They are used to represent solutions to boundary value problems for Laplace's equation, with the density $\sigma(\by)$ determined by the boundary data. The solution to the Dirichlet problem can be written in terms of the double layer potential alone, while the single layer potential applies to the Neumann problem (there are also combined representations involving both the single and double layer).  For concreteness, we focus on the Dirichlet problem and henceforth our QBX method will be described for the double layer potential. Analogous methods for the single layer potential follow with obvious modifications.

The layer potentials become
singular on the boundary $\partial D$ and are difficult to evaluate
accurately by a numerical method, not only when the evaluation or
{\em target} point $\bx$ is on the boundary $\partial D$, but also 
when it is close to the boundary and the integral is nearly singular. 
If we let $\bx$ approach the boundary from either the interior or
exterior domain for the double layer potential, the limits are
different. This is expressed by the following   {\it jump relation}  \cite{Kress} that will be used in our formulation:

\begin{theorem}  
Under the given assumptions on $\partial D$ and $\sigma$,
the double layer potential $v$ with density $\sigma$ can be continuously extended from $D$ to $\bar{D}$ and from $\mathbb{R}^3 \backslash \bar{D} $ to  $\mathbb{R}^3 \backslash {D} $ with limiting values
\be \label{double_layer_jump}
v_\pm(\bx) = \int_{\partial D} \sigma(\by)  \frac{\partial
  G(\bx,\by)}{\partial \nu(\by)} \ dS_{\by} \pm \frac{1}{2} \sigma(\bx),
~~~\bx \in \partial D,
\ee
where 
\[
v_\pm(\bx)= \lim_{h \rightarrow 0} v(\bx \pm h \nu(\bx))),
\]
$\nu(\bx)$ is the outward normal at a point $\bx \in \partial D$, and the integral exists as an improper integral.
\end{theorem}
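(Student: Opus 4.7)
My plan is to follow the classical potential-theoretic approach, which exploits the fact that a smooth surface makes the double layer kernel weakly (rather than strongly) singular, together with the special case of the constant density. Let me fix $\bx_0 \in \partial D$ and study $v(\bx)$ as $\bx$ approaches $\bx_0$ along the normal from either side.

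The first step is to recall (or independently derive via the divergence theorem on $D$ punctured by a small ball about $\bx$, together with the mean value property for harmonic functions) Gauss's formula for the double layer with constant density $\sigma \equiv 1$:
\[
I(\bx) := \int_{\partial D} \frac{\partial G(\bx,\by)}{\partial \nu(\by)}\, dS_\by = \begin{cases} -1, & \bx \in D,\\ -\tfrac{1}{2}, & \bx \in \partial D,\\ 0, & \bx \in \mathbb{R}^3 \setminus \bar D.\end{cases}
\]
(Signs depend on conventions; what matters is the jump of $1$ across $\partial D$ with value equal to the average of the one-sided limits at the surface.) This already establishes the theorem in the case $\sigma \equiv 1$.

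The second step is the standard subtraction trick. Write
\[
v(\bx) = \underbrace{\int_{\partial D} \bigl[\sigma(\by) - \sigma(\bx_0)\bigr] \frac{\partial G(\bx,\by)}{\partial \nu(\by)}\, dS_\by}_{=: w(\bx)} + \sigma(\bx_0)\, I(\bx).
\]
By Gauss's formula, $\sigma(\bx_0) I(\bx)$ tends to $\sigma(\bx_0) I(\bx_0) \mp \tfrac{1}{2}\sigma(\bx_0)$ as $\bx \to \bx_0$ from the interior/exterior sides. So it remains to prove that $w(\bx)$ is continuous across $\partial D$ at $\bx_0$, i.e.\ that
\[
\lim_{\bx \to \bx_0} w(\bx) = \int_{\partial D} \bigl[\sigma(\by) - \sigma(\bx_0)\bigr] \frac{\partial G(\bx_0,\by)}{\partial \nu(\by)}\, dS_\by,
\]
with the right-hand integral existing as an improper integral. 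Adding these two pieces gives the claimed formula.

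The main obstacle, and essentially the content of the theorem, is establishing the continuity of $w$ through $\bx_0$. The key geometric input is that for a $C^2$ surface the quantity $\nu(\by)\cdot(\bx_0-\by)$ vanishes to second order in $|\bx_0 - \by|$, so the kernel $\partial G(\bx_0,\by)/\partial \nu(\by)$ is $O(|\bx_0-\by|^{-1})$ rather than $O(|\bx_0-\by|^{-2})$; this is the standard ``weakly singular on a smooth surface'' estimate obtained by working in local coordinates on a tubular neighborhood of $\bx_0$ and Taylor expanding the surface. For a near-surface point $\bx = \bx_0 \pm h\nu(\bx_0)$ a uniform analogue holds in a small surface patch $\Sigma_\delta$ about $\bx_0$. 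Given $\varepsilon > 0$, continuity of $\sigma$ lets us choose $\delta$ so that $|\sigma(\by)-\sigma(\bx_0)| < \varepsilon$ on $\Sigma_\delta$, whence the contribution of $\Sigma_\delta$ to $w(\bx)$ is $O(\varepsilon)$ uniformly in $\bx$ near $\bx_0$ by the weakly singular estimate. On the complement $\partial D \setminus \Sigma_\delta$ the kernel is smooth in $\bx$ near $\bx_0$, so dominated convergence gives continuity of that piece. Sending first $\bx \to \bx_0$ and then $\varepsilon \to 0$ yields the desired limit and finishes the proof; existence of the improper integral at $\bx_0$ follows from the same weakly singular bound.
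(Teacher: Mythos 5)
Your argument is the classical potential-theory proof of this jump relation (Gauss's identity for the constant density, the subtraction trick, and the uniform weak-singularity bound on the double-layer kernel for a $C^2$ surface), which is exactly the treatment in Kress that the paper cites in lieu of a proof; the sign bookkeeping matches the paper's convention for $v_\pm$. It is correct as a sketch, with the only implicit debts being the uniform kernel estimate for points on the normal segment near $\bx_0$ and the routine upgrade from normal limits to continuity of the extensions to $\bar{D}$ and $\mathbb{R}^3\setminus D$.
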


\subsection{Integral equation formulation}
 Consider the exterior Dirichlet problem for a multiply-connected domain.
Let $v(\bx)=\dlayer \sigma(\bx)$, where $\dlayer$ defines the double layer potential, as in
(\ref{double_layer}).  
The jump relation (\ref{double_layer_jump}) provides a second kind integral equation for the
density $\sigma$
\be \label{second_kind}
\frac{\sigma(\bx)}{2}+\dlayer \sigma(\bx) = f(\bx)~~\mbox{for}~~\bx \in \partial D,
\ee
where we associate the function  $v_+(\bx)$ in (\ref{double_layer_jump})  with the Dirichlet data $f$.  The homogeneous version of this equation is
\be \label{homogeneous}
\left( \frac{1}{2} + \dlayer \right) \sigma(\bx)=0 ~~\mbox{for}~~\bx \in \partial D.
\ee
Unfortunately, the above equation has nontrivial solutions, for example, $\sigma_k = \chi_k$ where $\chi_k$ is the characteristic function on
boundary component $\partial D_k$.   For such $\sigma_k$, we have from a well-known result in classical potential theory \cite{Kress} that 
${\cal D} \sigma_k(\bx)=-1/2$ if $\bx \in \partial D_k$,
and it immediately follows that $\sigma_k$ satisfies (\ref{homogeneous}).
Moreover, it can be shown \cite{Kress} that the $M$  linearly independent functions
$\sigma_k$ for $k=1, \ldots, M$  provide a basis for the null space of the homogeneous equation.  In other words, the
dimension of the 
null space of the homogeneous operator  in (\ref{homogeneous}) is equal to the number of boundary components in our domain. 

\tp{
We will modify the second kind integral equation (\ref{second_kind}) so that it has a unique solution (see \cite{Kress} for a modified equation in the case of a single boundary component $M=1$, and \cite{Greenbaum1993}, \cite{HelsingWadbro2005} for a similar approach in 2D).  Our approach is motivated by Tausch and White \cite{TauschWhite},  who considered the so-called capacitance problem, which is the adjoint of the problem considered here. 
Let 
\begin{equation}
A \sigma(\bx) = \sum_{k=1}^M \left( \frac{1}{\sqrt|S_k|} \int_{\partial D_k} \sigma(\by) \ dS_{\by} \right) G(\bx_k,\bx),
\label{eq:BAprime}
\end{equation}
where $\bx \in \partial D$ and we recall $G$ is the free-space Green's
function. 
Here $\bx_k$ is any point in the interior of region $D_k$ and 
\[
\left| S_k \right| = \int_{\partial D_k} dS_{\by} 
\]
is the surface area of $D_k$.  
The modified second kind equation for the density $\sigma$ is
\be \label{modified_second_kind}
\left( \frac{1}{2} + \dlayer + A \right) \sigma(\bx)=f(\bx)~~\mbox{for}~~\bx \in \partial D.
\ee
}
For the modified  equation we have
\begin{theorem} \label{thm1}
The second kind integral equation (\ref{modified_second_kind}) has a unique solution. Moreover, $u= (\dlayer  + A) \sigma$ where $\sigma$ satisfies (\ref{modified_second_kind}) is a solution to the exterior Dirichlet problem.
\end{theorem}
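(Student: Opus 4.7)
The plan is to (i) establish uniqueness of (\ref{modified_second_kind}) by analyzing the homogeneous equation, (ii) use the Fredholm alternative to upgrade uniqueness to invertibility, and (iii) verify by direct computation that $u=(\dlayer+A)\sigma$ satisfies the exterior Dirichlet problem. Step (ii) will be immediate once (i) is in place: $\dlayer$ is compact on $C(\partial D)$ by classical potential theory (Kress), $A$ has rank at most $M$ and is therefore compact, so $\tfrac{1}{2}I+\dlayer+A$ is Fredholm of index zero, and injectivity implies invertibility.

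For step (i), I would take $\sigma$ with $(\tfrac{1}{2}+\dlayer+A)\sigma=0$ and set $u=(\dlayer+A)\sigma$. Since each source $\bx_k$ lies strictly inside $D_k$, the term $A\sigma$ is smooth in a neighborhood of $\partial D$, so the jump relation (\ref{double_layer_jump}) gives $u_+=0$ on $\partial D$. Because $u$ is harmonic in $\mathbb{R}^3\setminus\bar D$ and decays at infinity, uniqueness for the exterior Dirichlet problem forces $u\equiv 0$ there. The standard value and normal-derivative jump relations of the double layer (the latter being continuous for sufficiently regular $\sigma$, which holds here by standard regularity theory for second kind equations on $C^2$ boundaries) then give $u_-=-\sigma$ and $\p u_-/\p\nu=0$ on $\partial D$.

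The main obstacle, and the step that actually uses the modification $A$, will be to show $\int_{\partial D_k}\sigma\,dS=0$ for each $k$. Inside each $D_k$ the function $u$ is harmonic away from $\bx_k$, where it has a prescribed point singularity of strength $c_k$ proportional to $\int_{\partial D_k}\sigma\,dS_\by$. Applying the divergence theorem on $D_k\setminus B_\epsilon(\bx_k)$ and sending $\epsilon\to 0$ converts the flux of $u$ through $\partial D_k$ into $-c_k$; combined with $\p u_-/\p\nu=0$ this forces $c_k=0$, hence $A\sigma\equiv 0$. Then $u=\dlayer\sigma$ is genuinely harmonic throughout $D_k$ with vanishing Neumann data, so $u$ is constant on each $D_k$ and $\sigma=-u_-$ is constant on each $\partial D_k$, i.e., $\sigma=\sum_k a_k\chi_k$. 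Re-using the already-proved $\int_{\partial D_k}\sigma=0$ forces each $a_k=0$, so $\sigma\equiv 0$.

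Finally, for step (iii), given the unique $\sigma$ solving (\ref{modified_second_kind}), the representation $u=(\dlayer+A)\sigma$ is harmonic in $\mathbb{R}^3\setminus\bar D$ (the point singularities of $A\sigma$ sit safely inside the $D_k$, and $\dlayer\sigma$ is harmonic off $\partial D$), decays uniformly like $1/|\bx|$ at infinity, and has exterior boundary limit $(\tfrac{1}{2}+\dlayer+A)\sigma=f$ by (\ref{double_layer_jump}), verifying all three requirements of the exterior Dirichlet problem. The only delicate technical point throughout is invoking continuity of the normal derivative of the double layer across $\partial D$, which will be justified by the standard $C^2$-boundary regularity results for the kernel appearing in (\ref{second_kind}).
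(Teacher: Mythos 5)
Your proposal is correct, but it proves injectivity by a genuinely different route than the paper. The paper's proof is brief: it establishes uniqueness by showing that the null space of the \emph{adjoint} of the operator in (\ref{modified_second_kind}) is trivial, deferring that computation to the proof of Theorem 2.1 of Tausch and White for the capacitance problem (which is exactly the adjoint problem), and then invokes the Fredholm alternative; the second claim is handled, as you do, by observing that $A\sigma$ is smooth near $\partial D$ and so does not alter the jump relation (\ref{double_layer_jump}). You instead attack the null space of the operator itself with a classical potential-theoretic argument: exterior Dirichlet uniqueness gives $u\equiv 0$ outside, the value jump gives $u_-=-\sigma$, the Lyapunov--Tauber continuity of the normal derivative of the double layer gives $\partial u_-/\partial\nu=0$, a Gauss flux computation around each point charge $\bx_k$ forces $\int_{\partial D_k}\sigma\,dS=0$ (hence $A\sigma\equiv 0$) -- this is precisely where the modification $A$ earns its keep -- and interior Neumann uniqueness then shows $\sigma$ is constant on each $\partial D_k$, whence $\sigma\equiv 0$; compactness of $\dlayer$ plus the finite rank of $A$ upgrades injectivity to invertibility. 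Your version is self-contained and makes transparent why the rank-$M$ correction removes exactly the $M$-dimensional null space of (\ref{homogeneous}), at the cost of needing the Lyapunov--Tauber theorem and a mild regularity argument for $\sigma$; the paper's version is shorter but leans on the external adjoint/capacitance computation and the fact that an operator and its adjoint have null spaces of equal dimension. Both arguments treat the existence-of-solution claim identically.
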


\begin{proof}
\tp{
The uniqueness theorem can be established by showing that the null space of the adjoint of the second-kind
operator  in (\ref{modified_second_kind}) is $\left\{ 0 \right\}$ (by the First Fredholm Alternative \cite{Kress}, the dimensions of the null space of an operator  and its adjoint are the same).
This result follows similarly to the proof
of Theorem 2.1 in  
\cite{TauschWhite}. To get the solution to the exterior Dirichlet problem, note that $\tilde{\dlayer} \sigma = \dlayer \sigma + A  \sigma$ satisfies the same jump condition as $\dlayer \sigma$, so  $u(\bx) = \tilde{\dlayer} \sigma (\bx)$ for $\bx \in 
\mathbb{R}^3 \backslash \bar{D}$, where $\sigma$ is a solution to (\ref{modified_second_kind}), is a solution to our problem. }
\end{proof} 

\noindent
For the exterior problem we solve
(\ref{modified_second_kind}) for $\sigma$ and find the solution for
$\bx \in \mathbb{R}^3 \backslash \bar{D}$ by
$u= (\dlayer + A) \sigma$.


For the interior Dirichlet problem we again let $u(\bx)=\dlayer
\sigma(\bx)$, 
now for $\bx \in D$.  Using the jump relation
(\ref{double_layer_jump}) for the double layer potential, we obtain
the integral equation
\begin{equation}
-\frac{\sigma(\bx) }{2}+\dlayer \sigma(\bx) = f(\bx)~~\mbox{for}~~\bx \in \partial D.
\label{eq:interior_second_kind}
\end{equation}
The homogeneous version of this integral equation has no non-trivial
solution and hence does not need to be modified. This is a second kind
integral equation that we need to solve for $\sigma$, and then find the
solution by $u(\bx)=\dlayer \sigma(\bx)$ for $\bx \in D$.

\subsection{The discrete problem}

To solve the integral equation for either the interior
 or exterior
 problem, we first need a method to
numerically evaluate integrals over $\partial D$, the boundary of the
(possibly multiply-connected) domain.  We use the notation
\[
I[f]=\int_{\partial D} f(\by) dS_{\by},
\]
which we numerically evaluate by a quadrature rule $Q_N$, that
defines a set of $N$ nodes $\by_i$ and weights $w_i$ on $\partial D$,
such that 
\begin{equation}
I[f] \approx Q_N[f]= \sum_{i=1}^N f(\by_i) w_i. 
\label{eq:QN}
\end{equation}
Details of the quadrature we actually use will be discussed in the
following two sections. 
Let us indicate by a superscript $h$ a quantity that is computed using
$Q_N$, e.g.\ 
\[
\dlayer^h \sigma (\bx) = Q_N \left[ \sigma(\cdot) K(\bx,\cdot)    \right], 
\]
where $K(\bx,\by)=\nu(\by) \cdot \nabla_{\by} G(\bx,\by)$ .  

We apply the Nystr\"{o}m method, where the integral equation is
enforced at the quadrature nodes, i.e.,  for $\bx_i=\by_i$, $i=1,\ldots,N$.
The boundary integral equation for the interior problem
 is approximated by the $N \times N$
linear system for the density values $\sigma(\bx_i)$,
\begin{equation}
-\frac{1}{2} \sigma(\bx_i)+\dlayer^h \sigma(\bx_i)=f(\bx_i), \quad i=1,\ldots,N. 
\label{eq:interior_discrete}
\end{equation}
and similarly for the exterior problem (\ref{modified_second_kind}), 
\begin{equation}
\frac{1}{2} \sigma(\bx_i)+\dlayer^h \sigma(\bx_i) +A^h \sigma(\bx_i)
=f(\bx_i), \quad i=1,\ldots,N. 
\label{eq:exterior_discrete}
\end{equation}
where $A$ is defined in (\ref{eq:BAprime}). 

After these discrete values of $\sigma$ have been determined, the
solution can be computed at any point in the domain by
\begin{equation}
u^h(\bx)=\dlayer^h \sigma(\bx), \quad \bx \in D, 
\label{eq:interior_postproc}
\end{equation}
for the interior problem, and 
\begin{equation}
u^h(\bx)=(\dlayer^h +A^h) \sigma(\bx), \quad \bx \in \mathbb{R}^3
\backslash \bar{D}, 
\label{eq:exterior_postproc}
\end{equation}
for the exterior problem. 

The matrix for the linear system ((\ref{eq:interior_discrete}) or
(\ref{eq:exterior_discrete})) is dense, and solving it directly would
incur a cost of $O(N^3)$. For these well-conditioned formulations, the
number of iterations needed in an iterative solution method such as
GMRES is independent of the discretization. The problem can hence be
solved at $O(N^2)$ cost, where the constant depends on the geometry of
the problem. The $O(N^2)$ cost arise from the matrix-vector multiply
for a dense system, i.e.\ evaluating the discretization of
$\dlayer^h \sigma$.  As discussed in the introduction, applying a fast
method such as a fast multipole method, a treecode or a method based
on FFTs, this cost can be further reduced to $O(N)$ or $O(N \log N)$
which means that the linear system can be solved in (essentially)
linear time. In the numerical examples given in Section \ref{sec:results}, we simply use the $O(N^2)$ direct summation to perform the matrix-vector multiply in cases that involve only a few (one or two) domains $D_k$. For problems with more domains, we use the $O(N \log N)$ treecode algorithm described in \cite{Krasny}. \tr{Details are given in Section \ref{sect:full_alg}.}

\section{Direct quadrature based on surface panels} 
\label{sect:direct_quad}

Before describing our QBX scheme, we introduce a direct surface integration method based on Gauss-Legendre quadrature that will
be used by our scheme. 
In this work, we consider as our boundaries surfaces of genus 0, that
each has a parameterization in spherical coordinates

\[
\partial D_k= \left\{ \bx(\theta, \varphi): 0 \leq \theta \leq
 \tr{\pi} , 0 \leq \varphi < \tr{2 \pi }\right\}, k=1,\ldots,M, 
\]
and an integral over the surface is written as 
\be \label{eqn:surface_int}
\int_{\partial D_k} f(\by) \, dS_{\by}= 
 \int_0^{2 \pi} \int_0^{\pi} f(\by(\theta,\varphi)) \saelem(\theta,\varphi) \ d\theta d\varphi,
\ee
where $\saelem(\theta,\varphi)=|\by_\theta (\theta,\varphi) \times \by_\varphi(\theta, \varphi)|$ denotes the surface area element.

For this discretization, the $(\theta,\varphi)$ plane is for each surface
$\partial D_k$ divided so that the surface is tiled with
$N^k_P=N^k_{\theta} \times N^k_{\varphi}$ surface panels, with a total of
$N_P$ panels over all surfaces.  On each panel, we use an $q$-point
Gauss-Legendre quadrature rule in both coordinate directions, and
hence $q \times q$ quadrature points over the patch.  This gives us a
total of $N= N_P \times q^2$ quadrature points on the surfaces.

For ease of notation we can let one index cover all the quadrature points $\by_i$ and weights $w_i$, where $i=1, \ldots ,N=q^2N_p$. Then the quadrature rule is given by (\ref{eq:QN}), with the weights $w_i$ being the product of the surface area element $W(\by_i)$ and the Gauss-Legendre quadrature weights. This quadrature rule will be referred to as the direct quadrature.
We will henceforth seek the solution $\sigma$ of the
discretized integral equation (\ref{eq:exterior_discrete}) at the
quadrature points of this direct quadrature. 
When applied to compute the double layer potential,
the direct integration $Q_N \left[ \sigma(\cdot) K(\bx,\cdot) \right]$  or its upsampled version described in  \S \ref{sect:upsamp} below can be sufficiently accurate if the target point is not too close to any source  panel. If it gets too close, the QBX method will be applied as a local correction (see Figure \ref{fig_upsample}). 


For smooth integrals, it is often advantageous to let the whole
surface be only one panel, and refine the grid by increasing $q$,
thereby achieving spectral accuracy. We however want to keep a
locality in our discretizations, since we will modify the direct quadrature
locally close to a singular or nearly singular point. 
In our computations, $q$ is typically set to $7$ for the direct quadrature,
which gives a
high order quadrature rule on each panel (15th order), without each
panel becoming too large. 


\begin{figure}
\vspace{-1.5in}
  \center{\includegraphics[clip=true, viewport=80 120 800 600, scale=.5]{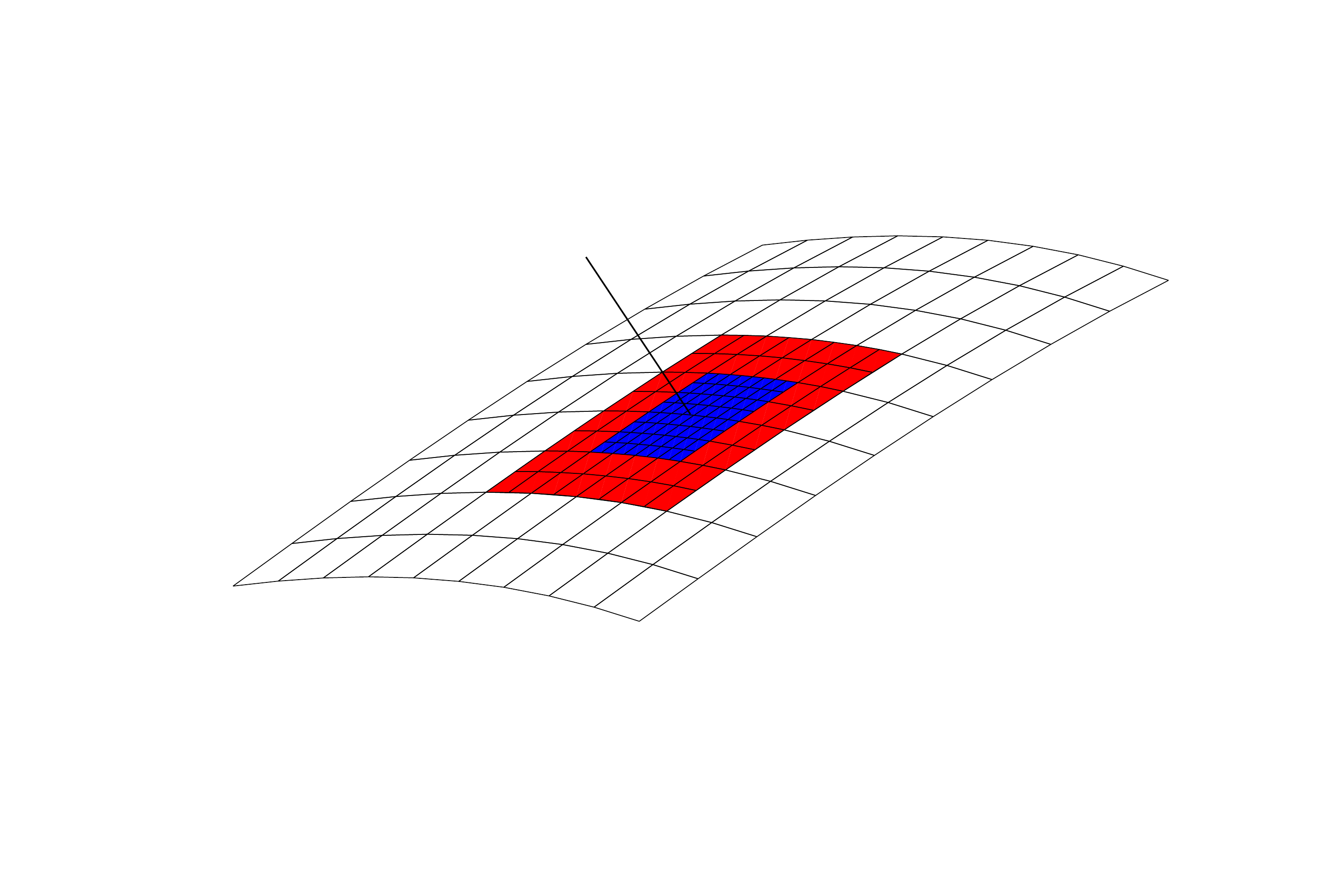}}   
	\put(-196,106){$\CIRCLE$}
       \put(-208,106){{\Large $\bx$}}
       \put(-212,130){$\CIRCLE$} 
        \put(-222,130){{\Large $\bc$}}
  \caption{\tb{ Tiling of the surface by Gauss-Legendre  panels. In this example, ${\cal D}\sigma(\bx)$ is computed using direct quadrature over the white region,   upsampled direct quadrature over the red region (with $\kappa=2$),  and  upsampled QBX over  the blue  region ($\kappa=4$).   }} 
  \label{fig_upsample} 
\end{figure}

\subsection{Upsampling for quadrature}
\label{sect:upsamp}

Assume that we want to evaluate an integral 
\be \label{eqn:surface_kernel_int}
\int_{\partial D_k} K(\bx,\by) \sigma(\by) \, dS_{\by}= 
 \int_0^{2 \pi} \int_0^{\pi} K(\bx,\by(\theta,\varphi)) \sigma(\by(\theta,\varphi)) \saelem(\theta,\varphi) \ d\theta d\varphi,
\ee
where $K(\bx,\by)$ is,  e.g.,  the kernel of the double layer
potential (\ref{double_layer}). 
Assume that the density $\sigma$ is known at 
the $N$ nodes of the direct quadrature rule.
If the kernel $K$ varies rapidly and is not well resolved on that grid, a
refinement can be made to increase the accuracy of the quadrature. 
The refinement is effective since the kernel is known analytically, but  the density $\sigma$ must be
interpolated and evaluated at these new points. 

For a panel $P$, we denote the interpolant of $\sigma$ by $\Pi^P
\sigma$. This interpolant is defined through the values of $\sigma$ at
the $q \times q$ Gauss-Legendre points. Now, split the panel $P$ into
$\kappa^2$ subpanels, each with $q^2$ Gauss-Legendre points. The
values of $\Pi^P \sigma$ at these $\kappa^2 q^2$ points can be evaluated
using barycentric Lagrange interpolation  \cite{Trefethen}.

We will refer to this interpolation procedure as an upsampling by a
factor of $\kappa$.  The upsampled quadrature of a function $f$ over a panel $P$ is a sum over all
$\kappa^2$ sub-panels of $P$, for each one using the $q^2$ interpolated
values of $f$. 
If the original quadrature over one panel is denoted $Q^P_q[f]$ 
then the  upsampled quadrature is denoted by  
\begin{equation}
Q^P_{q,\kappa}[f].
\label{eqn:Q_P_kappa}
\end{equation}
For the upsampled quadrature  of the double layer potential $Q^P_{q,\kappa}[K \sigma]$, $K$ is known analytically, so as described
above, this entails upsampling of $\sigma$. 

\section{Local expansions and the QBX method} \label{sect:localexp}

The QBX method  is used to accurately compute the double layer integrals when the source panel is close to the  target point. It makes essential use  of an expansion (Taylor's, spherical harmonic, etc.) centered about a point that is located just off of the surface, but further away  from the source panel than the target.  
To introduce the basic idea of the QBX
method, we first consider expansions of the Green's function. 
Introduce two points $\bx,\by \in \reals^3$, and a 
point $\bc \in \reals^3$ such that
$|\bx-\bc|<|\by-\bc|$. Furthermore, let 
$\theta$ be the angle between $\bx-\bc$ and $\by-\bc$ (see Figure \ref{fig_qbx}). 
We then have the following expansion around the center $\bc$,
\begin{equation}
\frac{1}{|\bx-\by|}=\sum_{n=0}^{\infty} 
\frac{|\bx-\bc|^n \ \ }{|\by-\bc|^{n+1}} P_n(\cos \theta), 
\label{eqn:Pn_expand}
\end{equation}
where $P_n$ is the Legendre polynomial of degree $n$. 

The addition theorem for Legendre polynomials, also called
the spherical harmonics addition theorem reads
\begin{equation} \label{addition_thm}
P_n (\cos \theta)=\frac{4 \pi}{2n+1} \sum_{m=-n}^n Y_n^{m}(\theta_y,\varphi_y) Y_n^{-m} (\theta_x,\varphi_x),
\end{equation}
where $(\theta_x,\varphi_x)$ and  $(\theta_y,\varphi_y)$ are the
spherical coordinates of $\bx-\bc$ and $\by-\bc$. 
Using this theorem, (\ref{eqn:Pn_expand}) can be further expanded as
\begin{equation}
\frac{1}{|\bx-\by|}=\sum_{n=0}^{\infty} \frac{4\pi}{2n+1}
\sum_{m=-n}^{n} \frac{|\bx-\bc|^n \ \ }{|\by-\bc|^{n+1}}
Y_n^{-m}(\theta_x,\varphi_x) Y_n^m(\theta_y,\varphi_y), 
\label{eqn:sph_expansion}
\end{equation}
where $Y_n^m$ is the spherical harmonic of degree $n$ and order $m$,
and is defined as
\be \label{Ynm_def}
Y_n^m(\theta,\phi)= \sqrt{\frac{2n+1}{4 \pi} \cdot \frac{(n-|m|)!}{(n+|m|)!}} P_n^{|m|}(\cos \theta) e^{i m \phi}
\ee
where $P_n^m$ is the associated Legendre function of degree $n$ and order $m$.

Consider now the double layer potential in (\ref{double_layer}). 
Using the spherical harmonics expansion (\ref{eqn:sph_expansion}), 
this can be written as
\be \label{eq:qbx_sum}
\dlayer \sigma(\bx)=
\sum_{n=0}^{\infty} \frac{4\pi}{2n+1}
\sum_{m=-n}^{n} z_{nm} |\bx-\bc|^n Y_n^{-m}(\theta_x,\varphi_x) , 
\ee
where 
\be \label{eq:qbx_znm}
z_{nm}=
\frac{1}{4 \pi} \int_{\partial D}  \sigma(\by) \, 
\nu(\by) \cdot \nabla_\by \left[\frac{1 }{|\by-\bc|^{n+1}}
Y_n^m(\theta_y,\varphi_y)  \right] dS_{\by}.
\ee
The domain of convergence for this local expansion is the ball
centered at $\bc$ with a radius $r_c$, 
\be \label{eqn:rc}
r_c=\min_{\by \in \partial D} |\bc -\by|,  
\ee
and as shown in Figure \ref{fig_qbx} includes the point  where the ball touches the surface \cite{Epstein2013}. 

In the QBX method, such expansions are used to compute layer
potentials when the target point $\bx$ is such that the integral is singular
or nearly singular. An expansion center $\bc$ is placed further away
from $\partial D$ than $\bx$, and a truncated expansion is formed
where coefficients are evaluated by numerical approximation of the integrals  (\ref{eq:qbx_znm})
defining them. 
High accuracy can be achieved if sufficiently many terms  are kept in
the expansion and care is taken in the evaluation of the
coefficients.

\subsection{Target specific QBX expansions}

In the QBX method, one expansion center is usually associated with
each discretization point on the boundary $\partial D$. This means
that the number of target evaluations based on each expansion typically is
small.  Here, we view the QBX technique as a local correction, not to
be built into an FMM.  We also want to use the method in situations
where geometry considerations do not allow for
pre-computation to speed up the QBX evaluations (although we will use precomputation when it is applicable).  In this case, it is
important that the number of terms in the expansion is as small
as possible, and that the coefficients can be computed efficiently. 
The separation between source and target that is achieved by
the spherical harmonics expansion (\ref{eqn:sph_expansion}), meaning
that the coefficients $z_{nm}$ in (\ref{eq:qbx_znm}) are independent
of the evaluation/target point, will not be of significant use.  The number of
coefficients to be computed in a 3D spherical harmonics expansion of degree $p-1$ is $p^2$.
By considering instead the
equivalent expansion using the original formulation (\ref{eqn:Pn_expand}), there are only $p$ terms, and in addition, we only have to work with
the Legendre polynomials. This yields significant cost savings.

This {\em target specific} expansion for the 
 double layer potential  (\ref{double_layer}) 
can be written as  
\be
\dlayer \sigma(\bx)=
\sum_{n=0}^{\infty} z_{n}(\bx) |\bx-\bc|^n , 
\label{eq:target_spec_exp}
\ee
where 
\be
z_{n}(\bx)=
\frac{1}{4 \pi} \int_{\partial D}  \sigma(\by) \, 
\mathbf{\nu}(\by) \cdot \nabla_\by \left[ \frac{1 }{|\by-\bc|^{n+1}}
P_n(\cos\theta_{\bx,\bc,\by})   \right] dS_{\by}, 
\label{eq:qbx_znD}
\ee
and the notation $\theta_{\bx,\bc,\by} $ is to emphasize that this
is the angle between $\bx-\bc$ and $\by-\bc$, and hence depends on the
target.    

We can also relate the expansion  (\ref{eqn:Pn_expand}) to a Taylor expansion in Cartesian
coordinates that with $\mathbf{k}=(k_1,k_2,k_3)$, $k_i \in \mathbb{Z}_{\geq 0}$, reads
\begin{align}
\frac{1}{|\bx-\by|} = \sum_{ \mathbf{k} } \left( \frac{1}{\mathbf{k}!}
  D_x^\mathbf{k} \frac{1}{|\bc-\by|}  \right)
(\bx-\bc)^\mathbf{k} 
=\sum_{ \mathbf{k} } b_{\bk}(\bc,\by) (\bx-\bc)^\mathbf{k}. 
\label{Taylors_approx} 
\end{align}
The coefficients $b_{\bk}$ obey the following recursion relation 
\cite{DuanKrasny, Krasny}, 
\begin{equation}
\normk R^2 b_{\bk}-(2 \normk -1) \sum_{i=1}^3 (y_i-c_i)b_{\bk-\ei}
+(\normk -1) \sum_{i=1}^3 b_{\bk-2\ei}=0, 
\label{eqn:rec_rel}
\end{equation}
where $\| \bk \|=k_1+k_2+k_3$, $b_0=1/|\by-\bc|$, and
$R=|\by -\bc|$.
Any coefficient $b_{\bk}$ with a negative index is set to $0$.  

In Appendix B
we show that the error incurred by
truncating the Taylor expansion (\ref{Taylors_approx}) after including
all spherical shells such that $\|\bk\| \le p$ is the same as the
error obtained when truncating the spherical harmonics expansion
(\ref{eqn:sph_expansion}) at $n=p$, i.e., once all spherical harmonics
up to degree $p$ have been included.  Naturally, this is also  the same as
truncating the original expansion  (\ref{eqn:Pn_expand}) at $n=p$.  

As defined above, the coefficients  $z_n$ are referred to
as {\em global}, since they contain information from all of
$\partial D$.  In the following, we will introduce our local approach.
For the surface discretization in Section
\ref{sect:direct_quad}, each surface is divided into panels. Only
for panels close to the evaluation point will the QBX technique be
applied, meaning that the integrals defining 
$z_n$ will
only be over a part of $\partial D$ (see Figure \ref{fig_upsample})..  For smooth geometries and layer
densities, the total field produced by the layer potential is smooth,
and the coefficients in a local expansion decay rapidly.  If we
consider the contribution only from a patch of the surface, then the
field that this part produces will not be as smooth. The decay of the
coefficients in the QBX expansion will depend on the ratio of the
distance from the expansion center to the surface as compared to the
distance from the center to the edge of the surface patch.  This will
be further discussed in Section \ref{sect:qbxerrs}.

\subsection{Local target-specific QBX evaluation} 
\label{sect:localQBX} 

When we use the QBX evaluation for a given target point $\bx$, we want
to do so including the contribution only from a set of panels close to
$\bx$. Assume that a set of panels have been selected, and denote the
part of $\partial D$ that they constitute by $\setofpanels$ (e.g., the blue panels in Figure \ref{fig_upsample}). 

We denote this local part of the double layer potential by
$\dlayloc \sigma(\bx)$, and the expansion in
(\ref{eq:target_spec_exp}) around the expansion center $\bc$ is now
modified to become
\begin{equation} 
\dlayloc
\sigma(\bx)= \sum_{n=0}^{\infty} \zloc_{n}(\bx) |\bx-\bc|^n ,
\label{eq:qbx_loc_exp}
\end{equation}
where 
\begin{equation} 
\zloc_{n}(\bx)=\frac{1}{4 \pi}
\int_{\setofpanels}  \sigma(\by) \, 
\mathbf{\nu}(\by) \cdot \nabla_\by \left[ \frac{1}{|\by-\bc|^{n+1}}
P_n(\cos\theta_{\bx,\bc,\by})   \right] dS_{\by}. 
\label{eq:qbx_zloc}
\end{equation} 
The difference between the definition of $\zloc_{n}(\bx)$ and 
$z_{n}(\bx)$ in (\ref{eq:qbx_znD}) is that the integral in the
former is only over $\setofpanels$ where as the integral in the
definition of $z_{n}(\bx)$ is over all of $\partial D$. 

We now give some details on the choice of the expansion centers.
Let us introduce 
\[
r=|\bx-\bc|, 
\quad \alpha=(\by-\bc)\cdot(\bx-\bc), ~~\mbox{and recall that} ~~ R=|\by-\bc|.
\]
We also further decompose the integral in (\ref{eq:qbx_zloc}) as $\int_{\Gamma^{loc}(\bx)}= \sum_k \int_{\Gamma^{loc}(\bx) \cap \partial D_k}$.  The choice of expansion center $\bc$ will depend on both the target point $\bx$ and the boundary component $\partial D_k$ over which the integration is performed.
\tb{In particular, the expansion center for $\bx$  will be chosen such that
\be \label{center_loc}
r=|\bx -\bc| < \min_{\by \in \partial D_k} |\bx -\by|.
\ee
as depicted in Figure \ref{fig_trunc} of \S \ref{sect:qbxerrs}. }
An additional requirement on  the choice of $\bc$ is that 
$\bx-\bc$ be normal to the surface $\partial D_k$ at the point $\by$ which minimizes (\ref{center_loc}).  The distance of the expansion center   from the surface $\partial D_k$  is typically chosen to be similar to  the (maximum)  grid spacing for the direct, non-oversampled  quadrature.

Expanding the derivative in (\ref{eq:qbx_zloc}) yields
\[
\nabla_\by  \left[  \frac{1}{R^{n+1}} P_n\left(\frac{\alpha}{r R}\right)  \right] =
\frac{(n+1) (\bc-\by)}{  R^{n+3}} P_n \left(\frac{\alpha}{r R} \right) 
 + \left( \frac{(\bx-\bc)}{r R} - \frac{\alpha (\bc-\by)}{r R^3}
  \right)  \frac{1}{ R^{n+1}}   P_n^\prime \left(\frac{\alpha}{r R} \right).
\]
Well-known recursion relations \cite{abramowitz} can be used to efficiently compute the functions $P_n(x)$ and $P_n^\prime (x)$ which appear in the above equation.  

Now note that each coefficient $\zloc_{n}(\bx)$ is defined with a
smooth kernel, even if the original integral was singular, allowing
for discretization by regular quadrature. 
In the local target specific QBX evaluation, the expansion 
in (\ref{eq:qbx_loc_exp}) is truncated at $n=p$, and the coefficients
$\zloc_{n}(\bx)$ are evaluated using discrete quadrature. 
We denote the coefficients by $\zloch_{n}(\bx)$ and evaluate
\begin{equation} 
\dlayloch
\sigma(\bx)= \sum_{n=0}^{p} \zloch_{n}(\bx) |\bx-\bc|^n .
\label{eq:qbx_loc_exp_h}
\end{equation}
When $\bx$ lies on the boundary component $\partial D_k$, one has the choice of using an expansion center that lies in the interior or exterior of  $\partial D_k$,  or taking an average of (\ref{eq:qbx_loc_exp_h}) over both centers. This is further discussed in Section \ref{sect:onsurface}.

Next we discuss the errors introduced by the local QBX  procedure. 



\section{Errors in QBX evaluation}
\label{sect:qbxerrs}

The error $E(\bx,\bc) =|\dlayloc \sigma(\bx)-\dlayloch \sigma(\bx)|$
with the two terms defined in (\ref{eq:qbx_loc_exp}) and
(\ref{eq:qbx_loc_exp_h}), respectively, has three parts. The first
part is that due to the truncation of the expansion at order $p$ and
the second the error from discretization of the integrals when
computing the expansion coefficients. There is a third part that
arises when we upsample (interpolate) the density $\sigma$ to a finer
grid, before the expansion coefficients are computed, which is of order $h^{q_{int}}$.  
 If this becomes
the dominating source of error, the underlying discretization must be
refined before errors can be further decreased by improved quadrature
treatment.

We use the triangle inequality to obtain
\begin{align}
E(\bx,\bc) &\leq 
\left| \dlayloc \sigma (\bx)- \sum_{n=0}^{p} \zloc_{n}(\bx)
             |\bx-\bc|^n \right|
+ \left| \sum_{n=0}^{p} \zloc_{n}(\bx) |\bx-\bc|^n - \dlayloch \sigma
   (\bx) \right|  \notag \\
&=  E_T(\bx,\bc)+E_Q(\bx,\bc)
\label{eq:qbx_err_T_Q}
\end{align}
Using  (\ref{eq:qbx_loc_exp}) and (\ref{eq:qbx_loc_exp_h}), this yields
the truncation error
\begin{equation}
E_T=\left| \sum_{n=p+1}^{\infty} \zloc_{n}(\bx)  |\bx-\bc|^n \right|
\label{eq:Etrunc}
\end{equation}
and the coefficient error
\begin{equation}
E_Q=\left| \sum_{n=0}^{p} (\zloc_{n}(\bx) -\zloch_{n}(\bx)) |\bx-\bc|^n \right|
\label{eq:Equad}
\end{equation}\\

\noindent
\underline{\em Error in the QBX coefficients} \\

Consider first the coefficient  error in (\ref{eq:Equad}).
The error that is introduced by the discretization of the integrals
defining the expansion coefficients in a QBX framework was analyzed by
af Klinteberg and Tornberg in \cite{Klinteberg2016b}.  
\tp{  There, both the single layer Laplace and Helmholtz
kernels are considered in two and three dimensions. For the three dimensional
case, error estimates are derived for the single layer Laplace
potential for two different cases, when the surface geometry is that
of a spheroid and for a flat  surface panel that is discretized by a $q
\times q$ Gauss-Legendre quadrature rule. 
Assuming a flat panel of size $h \times h$, 
the error estimate that is derived for the single layer potential is 
\be \label{eq:coeff_error}
E_Q^{S,one \ panel}(\bx)  \lesssim |\sigma(\bx)| \frac{h}{q} \sum_{l=0}^{p} 
\frac{2 \pi^{3/2} (2l)!}{\Gamma(l+1/2)(l!)^2} \left(
  \frac{qr}{h}\right)^l e^{-4 q r_P/h}
\ee
where $\Gamma$ is the Gamma function.
The error from the closest panel dominates, and $r_P$ is the
closest distance between the expansion center and that panel; we also  recall that $r$ is the distance from
the evaluation point $\bx$ to the expansion center.  
 The notation $a(q) \lesssim b(q)$ denotes ``approximately less than or
equal to'' 
in the sense that  there exists a  $K(q)$  such that  $a(q) \leq K(q)$ and 
$\lim_{q \rightarrow \infty} K(q)/b(q) = O(1)$.
The estimate has been derived assuming that $q$ is large, but in
practice it also works well for moderate $q$.  The corresponding estimate for the double layer potential remains to be derived, but crucially, it is expected to contain the same exponential term. We make essential  use of this below. \\
}


\noindent
\underline{\em Truncation error}\\

The truncation error was analyzed by Epstein et
al.\ \cite{Epstein2013} assuming a  ``global'' QBX approach. 
Here, we  consider the truncation error for the {\it  local} QBX evaluation in
(\ref{eq:Etrunc}).  The  result is presented  for the double layer potential. In Appendix B, we give a detailed derivation of the 
truncation error for the simpler case of the single layer potential. The derivation for the double layer potential is similar, and is left for the reader. 

Assume that the local correction $\dlayer_L \sigma(\bx)$ to the double layer potential given in (\ref{eq:qbx_loc_exp}), (\ref{eq:qbx_zloc})  involves integration over a smooth surface patch $\setofpanels$.  Normally $\setofpanels$ is a set of surface panels, but for simplicity we assume here that $\setofpanels$ has a smooth boundary, as shown in Figure \ref{fig_trunc}. 
We  assume that $\bar{\bx}$ is the point on the surface $\setofpanels$ that
is closest to the expansion center $\bc$, that is,   if  $B_c({\bc})$  is a ball of radius $c$ about $\bc$ then $\overline{B_c}(\bc) \cap \setofpanels = \{ \bar{\bx} \}$.  
The surface patch $\setofpanels$ is further assumed to be such that its projection $R_\Gamma$  onto the tangent plane at $\bar{\bx}$ is a disk of radius $\bar{R}$.
We place the  origin  $O$ of a Cartesian coordinate system at $\bar{\bx}$,  and assume the $x_3$ axis is directed along the line $\bar{\bx}-\bc$, i.e., normal to $\setofpanels$ at $\bar{\bx}$.   We make the  additional  assumption that 
\be \label{scaling_b_R}
\bar{R}^2<<|c|<<\bar{R}<<1. 
\ee
The situation is illustrated in Figure \ref{fig_trunc}.  


\begin{figure}
\vspace{0.in}
  \center{\includegraphics[clip=true, viewport=200 180 650 510, scale=.5]{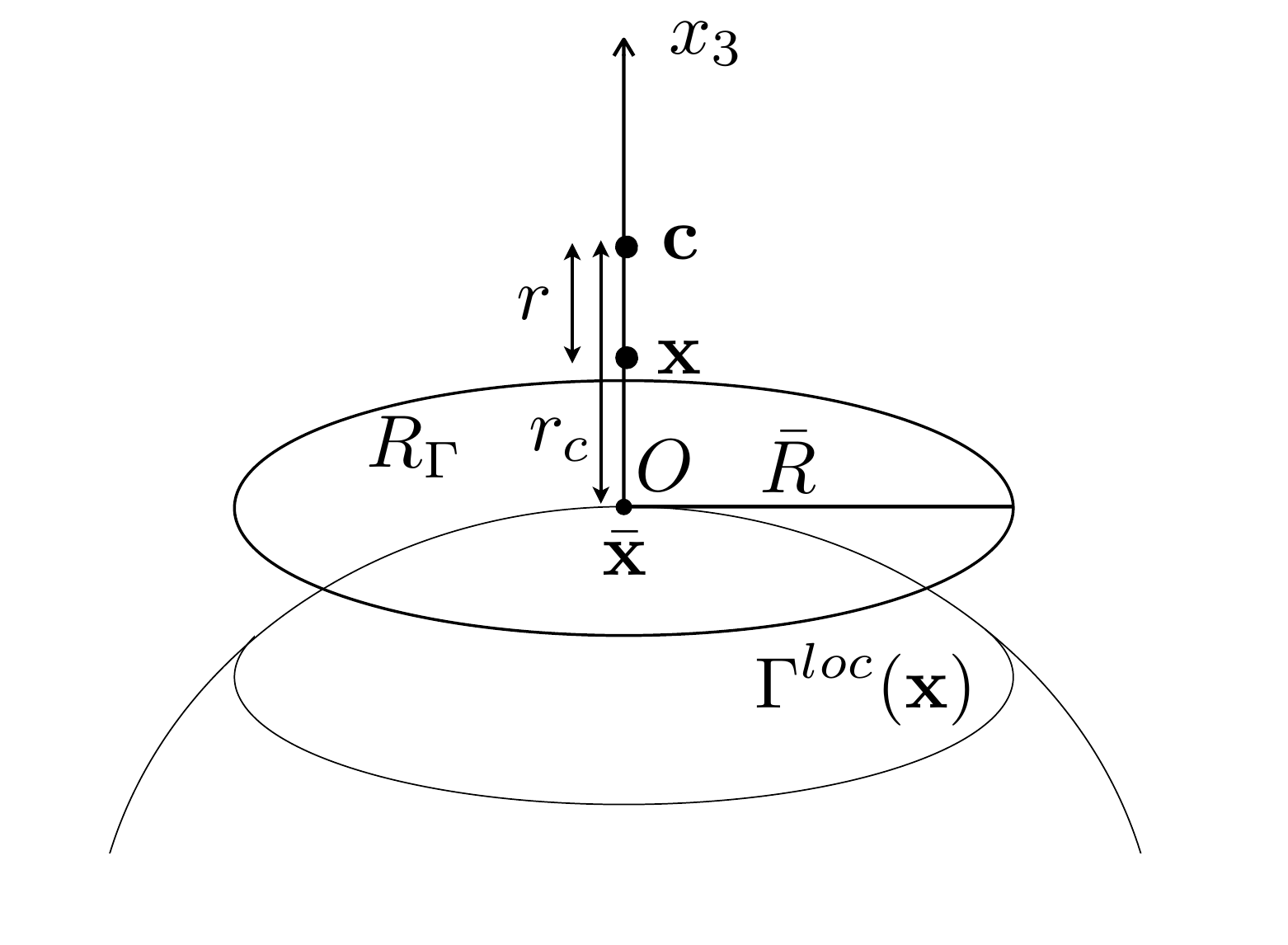}}   
  \caption{Surface patch $\setofpanels$ and its projection $R_\Gamma$ onto the tangent plane at $\bar{\mathbf{x}}$.  The expansion center is $\mathbf{c}$, target point is $\mathbf{x}$, and radius of $R_\Gamma$ is $\bar{R}$.   } 
  \label{fig_trunc} 
\end{figure}

Our  estimate for the truncation error of the double layer potential is the following:  
\begin{theorem} \label{DL_error_nonplanar}
Let  $E_T$ be the truncation error  of the local double layer potential $\dlayer_L \sigma (\bx)$ 
evaluated  by Taylor's expansion of order $p$ about the point $\bc=(0,0,c)$.  Then under the above assumptions,  $E_T$ evaluated at a target point $\bx=(0,0,x_3)$ inside the radius of convergence of the Taylor's series satisfies the bound
\begin{multline}
\label{eq:terror_dbllayer}
E_T \leq   C \ p \ \alpha_{p+1} \left| \sigma({\bf 0}) \right| \frac{\left[(1+\sqrt{2}) \ r \right]^{p+1}}{ \left( \sqrt{c^2+{\bar{R}}^2} \right)^{p+1}}   \left( 1 + O \left( \frac{c^2}{c^2+{\bar{R}}^2} \right) \right)\\
+ O \left( \alpha_{p}  \ p \ |\sigma({\bf 0})| \  {\cal H}  {\bar{R}} \frac{ r^{p+1}}{ (\sqrt{c^2+{\bar{R}}^2})^{p+1}} \right),
\end{multline}
where $r=|x_3-c|$, ${\cal H}$ is the mean curvature of $\setofpanels$ at $\bar{\bx}$, $C$ is a constant, and $\alpha_{p}$ is defined in  Lemma
\ref{lemma_planar}, Appendix C. 
\end{theorem}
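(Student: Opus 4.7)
The plan is to parallel the single-layer derivation in Appendix B (proof of Lemma \ref{lemma_planar}), with the kernel $1/|\bx-\by|$ replaced by its normal derivative $\nu(\by)\cdot\nabla_\by[1/|\bx-\by|]$ and to track how the differentiation modifies each estimate. Starting from
\begin{equation*}
E_T = \left| \frac{1}{4\pi} \int_{\setofpanels} \sigma(\by)\, \nu(\by)\cdot\nabla_\by T_p(\bx,\by)\, dS_\by \right|,\quad T_p(\bx,\by) = \sum_{n=p+1}^{\infty} \frac{r^n}{R^{n+1}} P_n(\cos\theta_{\bx,\bc,\by}),
\end{equation*}
with $r=|\bx-\bc|$ and $R=|\by-\bc|$, I would first use continuity of $\sigma$ to replace $\sigma(\by)$ by $\sigma(\mathbf{0})$ plus a remainder that is $O(\bar R)$ smaller and is absorbed into the $\mathcal H \bar R$ term, and then exploit the three-scale structure $\bar R^2 \ll c \ll \bar R \ll 1$ throughout.

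Next I would parametrize $\setofpanels$ as a graph $\by=(y_1,y_2,\phi(y_1,y_2))$ over the tangent disk $R_\Gamma$ of radius $\bar R$, with $\phi(0,0)=0$, $\nabla\phi(0,0)=\mathbf 0$, and $\phi=\tfrac{1}{2}(\kappa_1 y_1^2+\kappa_2 y_2^2)+O(\rho^3)$ in the principal directions at $\bar\bx$, so that $2\mathcal H=\kappa_1+\kappa_2$. Then $\nu=(-\phi_{y_1},-\phi_{y_2},1)/\sqrt{1+|\nabla\phi|^2}$ and $dS=\sqrt{1+|\nabla\phi|^2}\,dy_1\,dy_2$. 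Taylor expanding the integrand about $y_3=0$ splits it as
\begin{equation*}
\nu(\by)\cdot\nabla_\by T_p(\bx,\by) = \partial_{y_3} T_p(\bx,\by)\bigr|_{y_3=0} + [\,\text{curvature correction}\,],
\end{equation*}
where the correction is $O(\mathcal H\rho)|\nabla T_p|+O(\mathcal H\rho^2)|\nabla^2 T_p|$. The first piece will produce the leading term of (\ref{eq:terror_dbllayer}) and the second the $\mathcal H\bar R$ remainder.

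For the planar contribution, on $R_\Gamma$ one has $R=\sqrt{c^2+\rho^2}$ and $\cos\theta=c/R$, so the tail reduces to $\sum_{n\ge p+1}(r^n/R^{n+1})P_n(c/R)$. The derivative $\partial_{y_3}$ shifts the denominator by one power of $R$ and brings down a factor of $n+1$, which is precisely the mechanism that upgrades $\alpha_p$ to $\alpha_{p+1}$ in Lemma \ref{lemma_planar} and produces the factor $p$ out front. The $(1+\sqrt 2)^{p+1}$ emerges from bounding the differentiated Legendre series on $R_\Gamma$ through Bernstein's estimate $|P_n(z)|\le|z+\sqrt{z^2-1}|^n$ on the ellipse with parameter $R_e=1+\sqrt 2$ (semimajor axis $\sqrt 2$, semiminor axis $1$), obtained by deforming the contour used in the single-layer proof so as to enclose the correct analytic continuation of the kernel. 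The $O(c^2/(c^2+\bar R^2))$ remainder arises by Taylor expanding the planar integrand near the outer edge $\rho=\bar R$ (the dominant region when $c\ll\bar R$) and separately bounding the neighbourhood of $\rho=0$. For the curvature correction I would substitute $\phi=O(\mathcal H\rho^2)$ and $\nabla\phi=O(\mathcal H\rho)$, apply the Bernstein-type bounds to $|\nabla T_p|$ (one fewer derivative, hence $\alpha_p$ instead of $\alpha_{p+1}$), and integrate in $\rho$ over $[0,\bar R]$ to obtain the second term.

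The main obstacle is the clean identification of the constant $1+\sqrt 2$: it forces the bound on $P_n(c/R)$ and its derivatives to be carried out by contour deformation onto a specific Bernstein ellipse rather than by the naive real-variable estimate $|P_n|\le 1$, and the choice of ellipse must be compatible with the analytic structure of the planar integrand on $R_\Gamma$. A secondary technical difficulty is keeping the $O(c^2/(c^2+\bar R^2))$ correction uniform in $p$, which requires explicit Taylor-remainder control rather than abstract Landau-notation arguments. Once those two pieces are settled, the remainder is bookkeeping that follows the single-layer derivation in Appendix B almost line for line.
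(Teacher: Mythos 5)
Your overall plan (mimic the Appendix B single-layer argument, parametrize $\setofpanels$ as a graph over the tangent disk, split into a planar contribution plus a curvature correction that produces the ${\cal H}\bar{R}$ term, and track how the normal derivative shifts one power of $R$, brings down a factor $\sim n$, and flips the parity factor $\alpha_p\to\alpha_{p+1}$) matches the paper's route and the bookkeeping claims are consistent with how Theorem \ref{Ep_nonplanar} differs from Theorem \ref{DL_error_nonplanar}. However, there is a genuine gap at the heart of the planar estimate: you attribute the factor $(1+\sqrt{2})^{p+1}$ and the denominator $(\sqrt{c^2+\bar{R}^2})^{p+1}$ to a Bernstein-ellipse bound $|P_n(z)|\le|z+\sqrt{z^2-1}|^n$ applied pointwise to the Legendre tail. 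No such contour argument exists in the paper, and more importantly a pointwise bound on $P_n$ cannot give the stated result. On the disk one has $z=c/\sqrt{c^2+r_y^2}\in[0,1]$, where already $|P_n(z)|\le 1$ (sharper than any Bernstein bound), yet inserting any pointwise bound and integrating in $r_y$ makes the region $r_y\approx 0$ dominate: each term then contributes $O\!\left(r^n/|c|^{\,n-1}\right)$, so the tail is of size $(r/|c|)^{p+1}$, which is $O(1)$ when the target sits near the edge of the convergence ball and is far weaker than the claimed $(r/\sqrt{c^2+\bar{R}^2})^{p+1}$ with $|c|\ll\bar{R}$.

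The mechanism you are missing is cancellation, not a better pointwise bound. In the paper, $P_n(z)$ is expanded into its monomials, the $r_y$-integral of each monomial term is computed exactly by repeated integration by parts (\ref{ry_int_by_parts}), and the dangerous boundary terms at $r_y=0$, which carry the large factor $c^{-n+2j+2s-1}$, are shown to sum to zero over the monomial index $j$ by the binomial identity of Lemma \ref{lemma_bin1}; this cancellation (reflecting the oscillation of $P_n(c/R)$ as $r_y$ sweeps from $0$ to $\bar{R}$) is what replaces the would-be $|c|^{-(p+1)}$ behavior with $(\sqrt{c^2+\bar{R}^2})^{-(p+1)}$. The constant $1+\sqrt{2}$ then enters only afterwards, through Lemma \ref{lemma_bin2}, as a bound on the monomial coefficients of $P_n$ obtained from the three-term recursion ($b^{n+1}=2b^n+b^{n-1}$), not from any ellipse of analyticity. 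Without reproducing this expand--integrate-by-parts--cancel structure (carried through the extra $y_3$-derivative of the double-layer kernel), your argument cannot reach the bound (\ref{eq:terror_dbllayer}); the curvature-correction and parity bookkeeping are fine, but they sit on top of a planar estimate that, as proposed, fails.
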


\noindent
 The error estimates are chiefly used  to assess the order of convergence for different relative orderings of the numerical parameters. We give some examples.\\


\noindent
1.   In the error expressions (\ref{eq:coeff_error}) and (\ref{eq:terror_dbllayer}), we identify 
both $|c|$ and the minimum panel-to-center distance $r_P$ with $r_c$ (cf. (\ref{eqn:rc})), and note that $r \leq r_c$.   If we place the expansion center a distance $r_c=O(h^{1/2})$ from the surface and fix $\bar{R}$,   then the truncation error is at most $O(h^{(p+1)/2})$.   The dominant term in the coefficient error is $e^{-4qr_c/h}$ which is $O(e^{-1/h^{1/2}})$. The third source of error, i.e., the interpolation error from upsampling, is $O(h^{q_{int}})$.  All the errors tend to  zero, and high order can be achieved by choice of $p$ and $q_{int}$  (cf. Table \ref{table1}, \S  \ref{sec:results}). In this case, the method is classically convergent.\\

\noindent
 2.  We may also desire that  the local QBX correction take  $O(1)$ work per target as the grid is refined, so that its overall complexity is  $O(N)$.  This can be achieved by setting $\bar{R}=O(h)$.    (Later, we will identify $\bar{R}$ with a numerical `distance' parameter $d_{QBX}$.) The error estimates require that we also set $r_c=O(h)$, in which case the coefficient and truncation errors remain fixed, but controllably small,  as $N$ increases. The total error then has the form
$E=O(\epsilon + h^{q_{int}})$, where $\epsilon$ is the sum of the coefficient and truncation error, and $h^{q_{int}}$ is the interpolation error.
In this case, our QBX method is not classically convergent, but converges with controlled precision   (cf. Table \ref{table2}, \S \ref{sec:results}). This type of error is also discussed in \cite{Klockner}.    \\

\noindent
3.  Numerical parameters for most of the  computational examples in \S \ref{sec:results}  
are chosen so that the quadrature  and
truncation errors are dominated by the  $O(h^{q_{int}})$ interpolation error. We make the specific choice $q_{int}=7$.


\section{Local QBX algorithm} \label{sect:local_qbx_alg}

\subsection{On-surface evaluation}
\label{sect:onsurface}

We describe in more detail our local QBX computation of the double layer potential in (\ref{eq:qbx_loc_exp}), (\ref{eq:qbx_zloc}).  The process is different for target points on the surface, which makes use of a precomputation, then for target points that are nearby but off the surface, for which the QBX correction  is computed `on the fly.' We begin with the on-surface evaluation.  
We want to compute
\begin{equation}
u_L(\bx_i)=\dlayloch \sigma (\bx_i), \quad i=1,\ldots, N
\label{eq:DL_h_bxi}
\end{equation}
where $\dlayloc$ was defined in (\ref{eq:qbx_loc_exp}), and the
superscript $h$ indicates that this is a numerically computed
approximation. The target points $\bx_i \in \partial D$ are the points for which we
are enforcing (\ref{eq:interior_discrete}),(\ref{eq:exterior_discrete}). 

Let $\bx_i \in \partial D_k$, and assume for now that 
$\setofpanels$ includes a set of neighboring
surface panels on $\partial D_k$, but no other parts of $\partial
D$.
We will call this set of panels the {\em local patch}.  Denote the
total number of discretization points within the local patch by
$\nptsx$, their locations by $\bx_q, \, q=1,\ldots n_{\bx_i}$, and let
$\mathbf{\Sigma}_i \in \reals^{\nptsx}$ contain
$\{ \sigma(\bx_q)\}_{q=1}^{\nptsx}$.

Given an expansion center $\bc_i$ and an expansion order $p$, 
we can  precompute a vector $R_i \in \reals^{\nptsx}$ such that
\begin{equation}
u_L(\bx_i)=R_i \cdot \mathbf{\Sigma}_i.
\label{eq:target_spec_quad}
\end{equation}
The vector $R_i$ represents the resulting action after 
$i)$ upsampling the density $\sigma$ to a finer grid locally on each surface panel
(upsampling factor $\kappa$), 
$ii)$ using the fine grid to compute the expansion coefficients
(\ref{eq:qbx_zloc}) at the expansion center $\bc_i$ up to order $p$,
including only the surface panels in the local patch, and 
$iii)$ finally evaluating the expansion (\ref{eq:qbx_loc_exp}) (summing
from $n=0$ to $p$) at $\bx_i$.

The numbers in $R_i$ are the effective quadrature weights. They are
however {\em target specific}, i.e.\ for each $\bx_i$ we will in
general find a different set of values.  If the surface $\partial D_k$
is e.g.\ axisymmetric, or if there are multiple $D_k$'s with the same surface shape, then several target points can use the same $R_i$
values. Precomputation was also used in \cite{Klinteberg2016} for simulations of Stokes flow with
axisymmetric spheroidal particles.

By scaling $n_{\bx_i}$  to have $O(1)$ size as $N$ is increased, the dot product in (\ref{eq:target_spec_quad}) has $O(1)$ complexity per target, or a total complexity of $O(N)$  for the evaluation of (\ref{eq:target_spec_quad}) over all the targets.



The on-surface target points are the discretization points in the
regular quadrature for $\partial D_k$, $k=1,\ldots,M$. Hence, we know
the location of the target points and we precompute and store the
vectors $R_i$ for all $\bx_i$ on $\partial D$, $i=1,\ldots,N$. They
can then be reused in each GMRES iteration.


By construction, QBX  evaluates the one-sided limit of a layer potential as $\bx$ approaches the boundary on the side of the expansion center. If this one-sided limit is the quantity of interest, then no further post-processing is needed. However, if the  
integral ${\cal D} \sigma (\bx)$ is desired for $\bx$ on the boundary $\partial D$, then 
additional steps are necessary. One option is to add/subtract the relevant quantity from the one-sided limit, as in (\ref{double_layer_jump}). A second option is to compute both one-sided limits using QBX and average, i.e.,
\[
{\cal D}\sigma(\bx) = \frac{1}{2} \left( \lim_{\bx \rightarrow \partial D_+} 
{\cal D}\sigma(\bx)  +  \lim_{\bx \rightarrow \partial D_- }
{\cal D}\sigma(\bx)  \right)
\]
by two applications of QBX. The advantages and disadvantages of these approaches are discussed at length in  \cite{Klockner}. Both options were investigated in  the numerical examples reported in Section  \ref{sec:results}, with little or no difference in the results for the specified error tolerances.

\subsection{Off-surface target points}
\label{sect:offsurface}

Off-surface target points that are close to the surface (the nearly
singular case) occur because another surface is close by as we are
solving the integral equation for the exterior problem 
(\ref{modified_second_kind}), or in the post-processing step when
we want to compute the solution anywhere in the domain. 
If the locations of these target points are not known before hand, 
target specific quadrature weights cannot be precomputed. 
In addition, there is a limit to how many precomputed numbers are
practical to store. 

Hence, these computations are done on the fly, as they are determined
to be necessary.
For each target point $\bx_i$, it needs to be determined what panels 
should be included in $\setofpanels$. This is done
efficiently utilizing a tree structure. 
The density  $\sigma$ must then be upsampled to a finer grid locally
on each panel such that expansion coefficients can be accurately
computed. Here we use an adaptive strategy, where the
distance from the patch to the target point will determine the 
upsampling factor $\kappa$. This will be further discussed in next
section, where the full algorithm is discussed.


\subsection{The full algorithm}
\label{sect:full_alg}

Recall that  each boundary component $\partial D_k$, $k=1,\ldots,M$ is tiled  into $N_\theta^k \times N_\varphi^k$ surface panels,
with $q^2$ Gauss-Legendre points on each, where we use the particular value $q=7$.
We solve the second kind integral equation (\ref{eq:interior_discrete})  or (\ref{eq:exterior_discrete}) for the interior or exterior problem, respectively, discretized by a
Nystr\"{o}m method
 with  GMRES to find the discrete values of the double layer density
$\sigma$.  Then for any given point $\bx$ in the solution domain, we
evaluate (\ref{eq:interior_postproc}) or  (\ref{eq:exterior_postproc}) to
obtain the solution at that point.

When performing a
matrix-vector multiply in a GMRES iteration, or in a post-processing
step to find the solution $u$, the double layer potential $\dlayer^h
\sigma(\bx)$ must be computed. 
In the previous sections, we have described how the direct quadrature
is not sufficient when the evaluation point $\bx$ is either close to a
boundary component, introducing a nearly singular integral, or
actually on a boundary surface (singular integral).  The on-surface
treatment is done with QBX and was discussed in Section
\ref{sect:onsurface}.  The QBX evaluation for off-surface target
points was discussed in Section \ref{sect:offsurface}.  However,
upsampling of panels can be sufficient by itself if the evaluation
(target) point is not too close to the surface.

Denote by $d_P(\bx_i)$ the closest distance from target point $\bx_i$
to panel $P$.  Assume that we are given two distances $\dup$ and
$\dqbx$ such that if $d_P(\bx_i)>\dup$, the direct quadrature on the
original grid is
sufficient, if $\dqbx < d_P(\bx_i) \le \dup$ we want to use upsampling
of the panel, and for $d_P(\bx_i) \le \dqbx$ we will use the QBX
treatment. Figure \ref{fig_localQBX} provides an illustration.

The algorithm for evaluating $\dlayer^h \sigma(\bx)$ then starts by 
evaluating $\dlayer^h \sigma(\bx)$ with the direct (coarse grid) quadrature
  at all target points using a fast hierarchical algorithm (here we use the treecode algorithm in \cite{Krasny}).  \tb{ The incorporation of an FMM or treecode is therefore entirely standard, and a `black box' algorithm can be used. }
However, this quadrature will not be accurate for panels that are close to  each target point $\bx_i$, and a local correction must then be done. 
We start by finding all panels P with $d_P(\bx_i)\le \dup$ (using a
tree structure), and for each panel in the set we subtract off the
contribution based on the direct quadrature over that panel,
$Q^P_q[\sigma K(\bx_i,.)]$, and subsequently add on a more accurate quadrature as described below.

\tb{Then as depicted in Figure \ref{fig_localQBX}: }
\begin{itemize}
\item[i)] If $\bx_i \in \partial D_k$ for some $k$, evaluate the QBX
  contribution from all panels in the local patch $\Gamma^{loc}(\bx_i)$ that are on the same surface component as $\bx_i$,
  using the precomputed target specific weights (\ref{eq:target_spec_quad}).
\item[ii)] For all other panels 
such that $d_{QBX}<d_P \leq \dup $, add
  upsampled quadrature contribution $Q^P_{q,\kappa}[\sigma
  K(\bx_i,.)]$ (see (\ref{eqn:Q_P_kappa})). 
\item[iii)] For all panels in  $\Gamma^{loc}(\bx_i)$ not yet included, pick a center $\bc_i$
  associated with $\bx_i$ and evaluate the contributions
   to the QBX coefficients $\zloch_n(\bx_i)$, $n=0, \ldots,p$, using
   upsampled quadrature (here we use with $q=15$). Form the sum  (\ref{eq:qbx_loc_exp_h}) to  evaluate the QBX expansion 
  at $\bx_i$.
\end{itemize}

\begin{figure}
\vspace{0.in}
  \center{\includegraphics[clip=true, viewport=40 100 510 298, scale=.75]{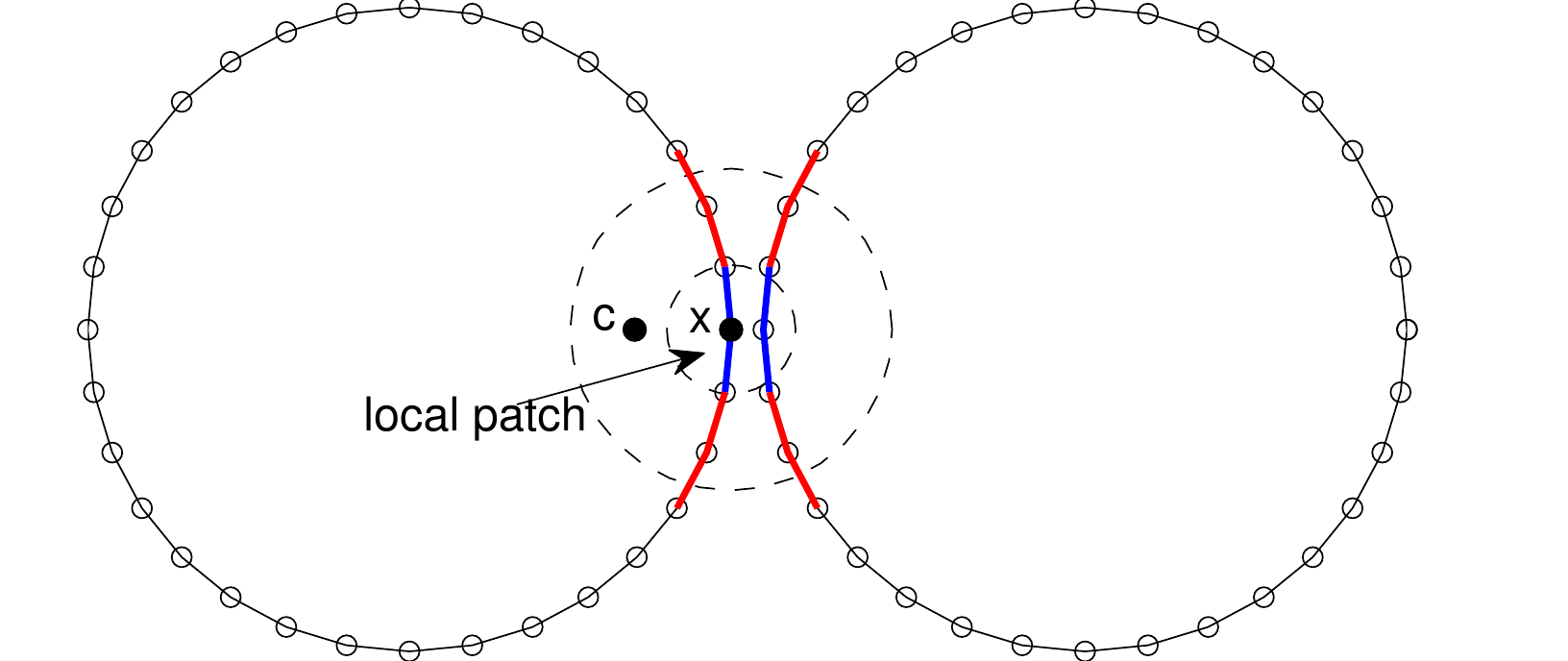}}  
  \caption{\tb{Computation of the local QBX correction at  $\bx$ using center $\bc$.    
${\cal D}_L^h \sigma(\bx)$ is computed over blue panels  using QBX, and over red panels by direct upsampled quadrature. Spheres of radius $d_{up}$ and $d_{QBX}$ (with $d_{up}>d_{QBX}$) are shown.}  } 
  \label{fig_localQBX} 
\end{figure}

There are several parameters that need to be set. 
Assume that the underlying discretization has been set, with
a total of $N_p$ panels and $q^2$ quadrature points on
each, such that  $N=N_P \times q^2$. 
If the error saturates as we increase $\dup$, $\dqbx$, 
upsampling factors and expansion order $p$ for our quadrature method,
we are seeing the error of the underlying discretization. 
Hence, if a smaller error is needed, $N$ must then be increased. 

Now assume that we have a fine enough underlying discretization, and
that we wish to set a tolerance for the relative quadrature
error, and select the parameters for the quadrature method thereafter. 
Hence, we need to set $\dup$, $\dqbx$, $p$, $r_c$ 
(or a way to select the center $\bc_i$ given a target $\bx_i$), and we
also need to set the upsampling rates, both for the direct upsampling
as described below, and for the quadrature when computing the QBX
coefficients. 


At this point, we do not have explicit expressions for exactly how to set
these parameters, although   the estimates for the truncation and coefficient errors given in Section \ref{sect:qbxerrs}  guide our choice. 
In those error expressions (see (\ref{eq:coeff_error}) and
(\ref{eq:terror_dbllayer})), we identify $\bar{R}$ with the parameter
$d_{QBX}$, $r$ with the center-to-target distance $|c_i-\bx_i|$, and as noted earlier
both $|c|$ and the (minimum) panel-to-center distance $r_P$  with $r_c$ (cf. (\ref{eqn:rc})).  We typically  choose  $d_{QBX}$ and the center position $\bc_i$ as described below so that     
$d_{QBX}$, $r$, and $r_c$ are
proportional to  the panel size $h$.  As discussed in Example 3, Section \ref{sect:qbxerrs}, this has the effect of  fixing the coefficient and truncation error at a small value as $h$ is reduced. 
Crucially, this scaling of parameters with $h$
also leads to the desired $O(1)$ work per target in the local correction step.


 In the next section, we introduce the evaluation of the double layer
potential over the unit sphere with the density $\sigma$ set as a
spherical harmonic function $Y_n^m$. For this case, we have an analytical
solution both for on-surface and  off-surface target points. We will
use this example to set our parameters. 

We will first discuss how to set the parameters for the on-surface
evaluation (Section \ref{sect:onsurface}). In $i)$ above, we have
assumed that the target specific weights resulting from the QBX
procedure have been precomputed, as we would do when we want to solve
an integral equation. They could also of course be computed directly
with the same parameter choices. 

Now, let $\hpanel$ denote a typical panel dimension.
Given a collection of on-surface target
points $\bx_i$, $i=1,\ldots,N_{set}$, 
define the local patch for each target by the set of the surface panels for which the
closest distance from $\bx_i$ to the panel is less than $\hpanel$.
Then do the following: 
\begin{itemize}
\item[i)]  Set the expansion radius $r_c$ no larger than $\hpanel/2$. The
  center $\bc_i$ will then be set at a distance $r_c$ from the surface
  normally out from $\bx_i$.  Note that for the on-surface evaluation, $r=r_c$. We also set $d_{QBX}=r_c$, or a small multiple of it.
\item[ii)] Set a large upsampling ratio $\kappa$ for all panels in the
  local patch, and increase $p$ until the desired accuracy is
  reached for all targets. That sets the value of $p$. 
\item[iii)] Now, with $r_c$ and $p$ selected, start to reduce $\kappa$. 
Pick the smallest value possible before it adversely starts to affect
your accuracy. 
\item[iv)] {\it Adaptive reduction of $\kappa$}. The minimum distance from
  the closest panel to the expansion center is $r_c$. Assume that the
  closest distance from panel $P$ is $r_P$. According to the error
  estimate for the coefficient error, the contribution of the error
  from panel $P$ will be comparable to the one from the closest panel
  if $\kappa_P r_P=\kappa r_c$, where $\kappa_P$ is the upsampling ratio for panel $P$.  Hence, depending on the size of $r_P$,
  $\kappa$ could possibly be reduced (always to an integer value).  This type of adaptive reduction is described in \cite{Klockner}, and  a similar version has been implemented here for the off-surface QBX  evaluation. 
\end{itemize}

We will now
discuss the off-surface evaluation, starting with the standard
upsampling.  It is easy to test for off-surface points and find the
limit $\dup$ up to which distance, but no closer, the direct
quadrature is sufficiently accurate. 
According to the error estimate (\ref{eq:coeff_error}), if the upsampling ratio is doubled,
the same accuracy can be retained for targets a distance $d_{up}/2$ from the surface. 
This gives a recipe to choose $\kappa$ at
different distances. To keep it simple, we however take $\kappa$ in
the whole upsampling region  to be the $\kappa$ needed at the distance
$\dqbx$. If the distance is smaller than $\dqbx$, we will switch to
the QBX evaluation.

The off-surface QBX evaluation will use the value of $r_c$ and the
number of expansion terms $p$ selected in the on-surface procedure. 
Here however, if a target point is at a distance $\tilde{r}_c$ from the surface, the
center will be placed in the (approximately) normal direction,
another distance $r_c$ away, yielding a distance from the center to the
surface of approximately $\tilde{r}_c+r_c$. 
The integrals to evaluate for  the QBX coefficients will hence typically
require less resolution than for the on-surface evaluation (since they are not as
nearly singular), but for target points extremely close to the
surface, it will be essentially the same, and we will keep the same
parameters as for the on-surface evaluation.

\section{Validation/Numerical results}
\label{sec:results}

In this section, we illustrate the performance of the target specific QBX method in several examples. Table \ref{table:params} reviews the main numerical parameters. Other than $d_{up}$, which is fixed at $2 \cdot d_{QBX}$, 
the specific values of the parameters are given in each example below.
\begin{table}[htp]
\begin{center}
\begin{tabular}{| l | l | l | }
\hline
  $N_\theta(=N_\phi)$ & no.\ panels on each $\partial D_k$ is $N_\theta \times N_\phi$ & \S   \ref{sect:direct_quad} \\
  $d_{QBX}$ & QBX criteria $0<d_P(\bx) \leq d_{QBX}$ &   \S \ref{sect:full_alg} \\ 
  $d_{up} (=2  d_{QBX})$ & direct upsampling criteria $d_{QBX}<d_P(\bx) \leq d_{up}$ & \S \ref{sect:full_alg}\\
  $\kappa$  & upsampling factor & \S \ref{sect:upsamp} \\
  $r_c$  & minimum distance from expansion  center to surface & \S \ref{sect:localexp}, eq. (\ref{eqn:rc})\\
  $p$  & truncation level & \S \ref{sect:localQBX}, eq. (\ref{eq:qbx_loc_exp_h})\\
  \hline  
\end{tabular}
\end{center}
\caption{Main numerical parameters, and the section where each is introduced. Here $d_P(\bx)$ is the minimum distance
from the target point $\bx$ to panel $P$. 
 }
\label{table:params}
\end{table}

\begin{figure}
\vspace{-.68in}
\center{\includegraphics[clip=true, viewport=40 38 475 360, width=.75\textwidth]{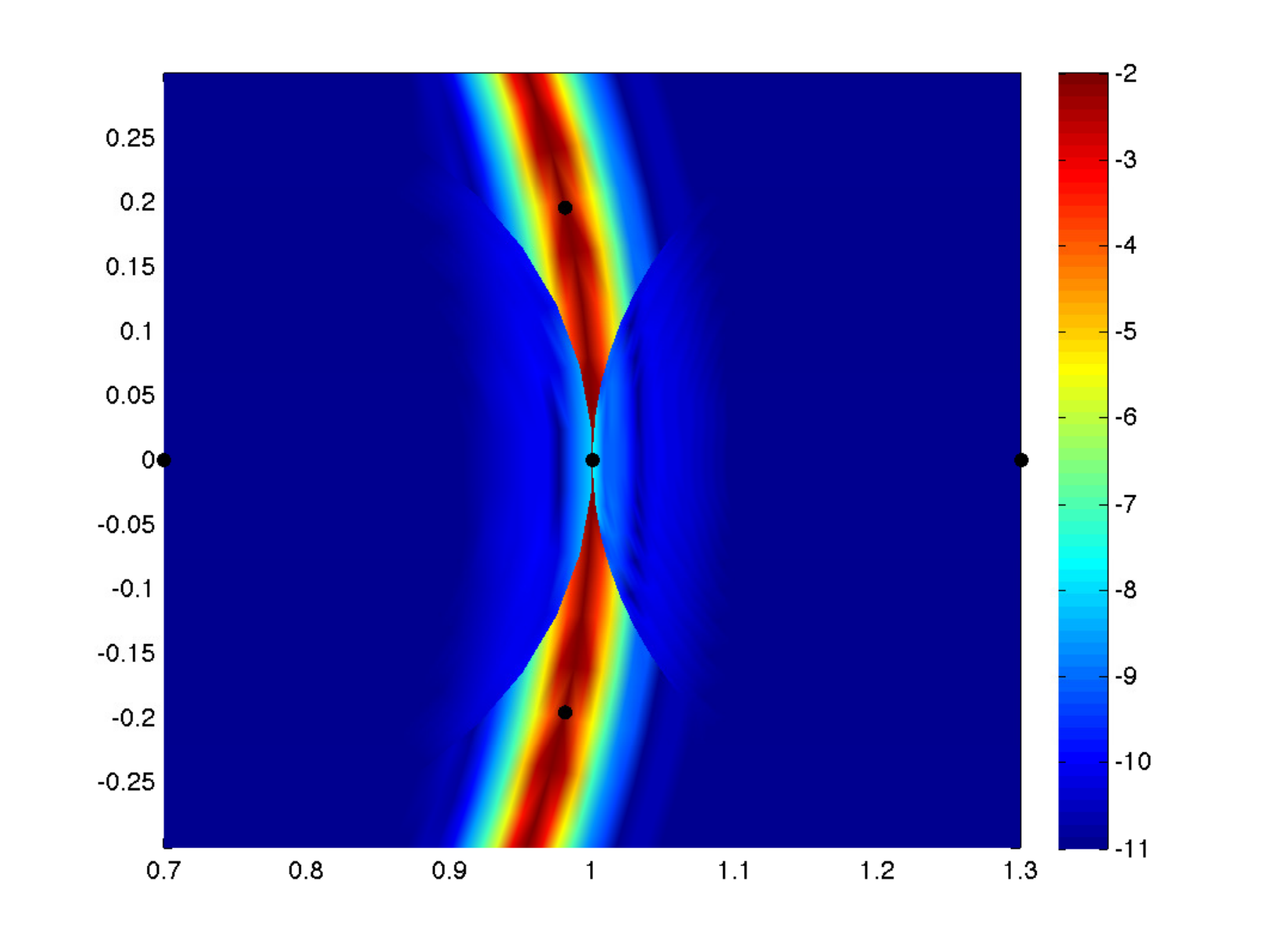}}
  \caption{ $l^\infty$ error in the potential ${\cal D}Y_2^2$ computed  using 15 point Gauss-Legendre quadrature over $32^2$ panels, except in two small spheres $|\bx-\bc_\pm| \leq r_c$ centered at  off-surface points $\bc_\pm$ on either side of $\partial D$  a distance $r_c=0.3$ from the boundary. The potential in the spheres is computed using our local QBX method with parameters $N_\theta=16, \kappa=2,$ and $d_{QBX}=0.7$  Only a small portion of the boundary $\partial D$ is shown. The markers indicate the panel size and locations of $\bc^\pm$.    } 
  \label{fig0} 
\end{figure}


\subsection{Evaluation of layer potentials}


We first validate our numerical method in calculations involving a single sphere
using exact analytical formulae for eigenfunctions of the double layer potential and separation of variable solutions for the interior and exterior Dirichlet problems. For $D$ a unit sphere, it is straightforward to show that 
\begin{equation} \label{eigenfunction}
 \int_{\partial D} Y_l^m(\theta', \phi')  \frac{\partial G(\bx(\theta, \phi), \by(\theta', \phi')) }{\partial \nu(\by (\theta', \phi'))} W(\theta',\phi') ~ d \theta' d \phi' = - \frac{Y_l^m(\theta, \phi)}{4l+2},
\end{equation}
i.e., the spherical harmonic function $Y_l^m$ is an eigenfunction of the double layer potential with eigenvalue $-(4l+2)^{-1}$. Furthermore, substituting  
$\sigma(\theta, \phi)=Y_l^m(\theta, \phi)$  into (\ref{double_layer_jump})   and comparing with the separation of variables solution in spherical coordinates, we see that 
\begin{equation} \label{exact_solution_int_ext}
u(\rho, \theta, \phi) = {\cal D} Y_l^m(\rho,\theta, \phi) =  \left\{
\begin{array}{ll}
 & -\frac{l+1}{2l+1} \ \rho^l Y_l^m(\theta, \phi)~~\mbox{for}~~ |\rho|<1, \\
 &\frac{l}{2l+1} \ \rho^{-(l+1)} Y_l^m(\theta, \phi) ~~\mbox{for}~~|\rho|>1,
\end{array}
\right.
\end{equation}
is the solution to the interior and exterior Dirichlet problems  with boundary data given by the respective solution in
(\ref{exact_solution_int_ext}) evaluated at  $\rho=1$. 


Figure \ref{fig0} provides an illustrative example of our  method, by
superimposing the errors from a QBX calculation of the double layer
potential in an interior and exterior  spherical region  on top of an
error plot from a computation of the potential using a standard Gauss-Legendre quadrature.   The error in ${\cal D} \sigma$ computed by the QBX expansion is much smaller than the error of the standard computation throughout both  spherical regions. These two regions are where the  expansions about the centers $\bc_\pm$ converge; it is here that the QBX method corrects the inaccuracies of the standard quadrature.

Our next example  (Table \ref{table1}) relates the error estimates in Section \ref{sect:qbxerrs} to numerical results. 
We compute the double layer
potential  on the surface of the unit sphere using the eigenfunction
density  $\sigma = Y_2^2$, and compare with the analytical result (\ref{eigenfunction}).  
\tp{ In the top two sets of entries, for $p=3$ and $7$,  the distance  of the expansion center  from the interface is scaled as  $r_c=O(h^{1/2}$). This is the scaling discussed in Example 1 of Section \ref{sect:qbxerrs}, for which the sum of the coefficient and truncation errors  is $O(h^{(p+1)/2})$.  Thus, the analysis predicts a second order method for $p=3$ and a fourth order method for $p=7$, and this is approximately observed in practice. 
The third set of entries is a high accuracy computation  with $p=20$ and fixed $r_c$.
In this example,  the
truncation and coefficient errors are small enough  that the the total
error is dominated by the  interpolation of the density, which is $O(h^{q_{int}})$ with $q_{int}=7$. 
The numerical results are roughly consistent with this expected order of accuracy.  Here and below, $h$ refers to the length  of one side of a Gauss-Legendre panel in $(\theta, \phi)$; more precisely, the panel dimensions are $h \times 2h$ since we use the same number of panels in $\theta$ and $\phi$. In all subsequent examples, parameters are chosen so that the $O(h^{q_{int}})$ interpolation error is the dominant source of error. } 


\begin{table}[htbp]
\begin{center}
\begin{tabular}{| l | l | l | l | l | l |}
\hline
$p$ &  $N_\theta(=N_\phi)$ & $r_c$ & $l^2$ error & $l^\infty$ error & EOC \\
\hline
$3$ &  $4$ & $0.2$ & $3.1  \times 10^{-2}$  & $4.8  \times 10^{-2}$ & $-$ \\
     &  $8$ & $0.15$ & $1.1 \times 10^{-2}$  &  $1.2 \times 10^{-2}$  & $2.0$\\
     &  $16$ & $0.1$ & $1.9 \times 10^{-3}$ & $1.5 \times 10^{-3}$  & $3.0$ \\ 
    &  $32$ & $0.075$ & $2.1 \times 10^{-4}$ & $3.0 \times 10^{-4}$ & $2.2$ \\
  \hline
$7$ &  $4$ & $0.2$ & $1.1  \times 10^{-2}$  & $1.2  \times 10^{-2}$ & $-$ \\
     &  $8$ & $0.15$ & $3.2 \times 10^{-4}$  &  $3.9 \times 10^{-4}$  & $4.9$\\
     &  $16$ & $0.1$ & $2.3 \times 10^{-5}$ & $1.6 \times 10^{-5}$  & $4.6$ \\
     &  $32$ & $0.075$ & $3.8 \times 10^{-7}$ & $5.5 \times 10^{-7}$ & $4.8$ \\
  \hline
$20$ &  $4$ & $0.2$ & $1.8  \times 10^{-5}$  & $2.8  \times 10^{-5}$ & $-$ \\
     &  $8$ & $0.2$ & $4.3 \times 10^{-7}$  &  $5.7 \times 10^{-7}$  & $5.4$\\
     &  $16$ & $0.2$ & $5.8 \times 10^{-9}$ & $8.0 \times 10^{-9}$  & $6.2$ \\ 
     &  $32$ & $0.2$ & $1.5 \times 10^{-11}$ & $2.5 \times 10^{-11}$ & $8.6$ \\
  \hline  
\end{tabular}
\end{center}
\caption{Relative $l^2$ and $l^\infty$ error in the computation of the
  on-surface double layer potential ${\cal D} Y_2^2$.  EOC is the
  empirical order of convergence calculated using the $l^2$ error at
  the previous level of grid refinement.  Other parameter values are
  $\kappa=2, d_{QBX}=0.35$ for $p=3,~7$ and   $\kappa=8, d_{QBX}=0.7$ for $p=20$.   
 }
\label{table1}
\end{table}

Figure \ref{fig1} shows how the relative  $l^\infty$ error varies with $p$ for the
 computation of the on-surface double layer potential ${\cal D} Y_2^2$  in   Table \ref{table1}.  At small values of $p$ the curves for different resolution $N_\theta$  overlap, indicating that the error is dominated by the truncation error.  For larger $p$ the interpolation error dominates, and thus the higher resolution computations are more accurate.  
The error estimates discussed in Section \ref{sect:qbxerrs} suggest that 
the coefficient error grows with $p$ and can become dominant at sufficiently large $p$ when other parameters are held fixed.
This is observed in the lowest resolution ($N_\theta=4$) curve, but is not seen in this $p$ range for the higher resolution computations.

\begin{figure}
\vspace{-.5in}
  \center{\includegraphics[width=.75\textwidth]{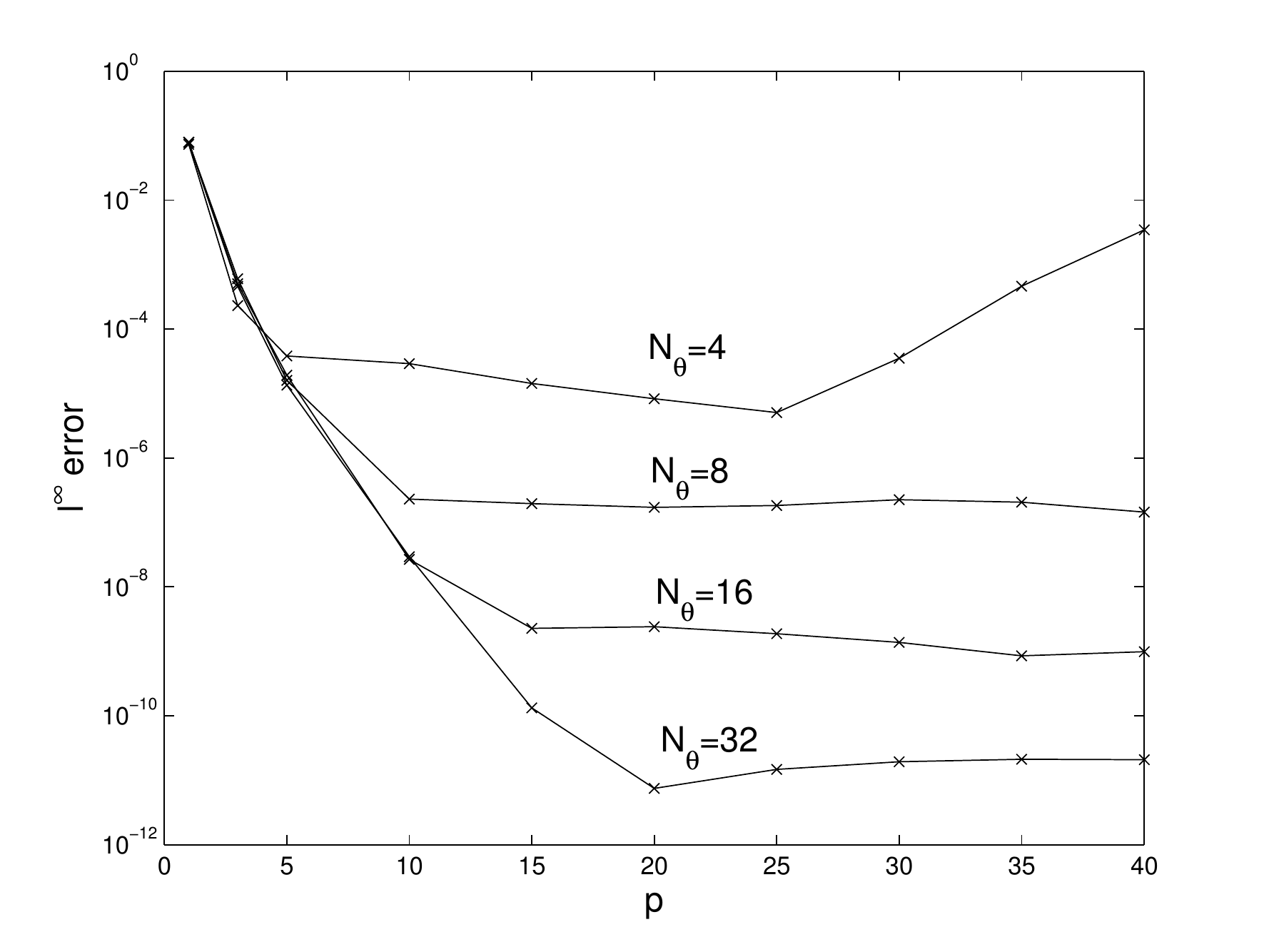}}   
  \caption{Relative $l^\infty$ error versus truncation $p$ in the computation of the on-surface double layer potential ${\cal D} Y_2^2$. Parameter values are $\kappa=8, ~r_c=0.2,~d_{QBX}=0.7$. } 
  \label{fig1} 
\end{figure}

Table \ref{table2} shows the same calculation of the on-surface double
layer potential  ${\cal D}Y_2^2$ as in Table \ref{table1} when $p=20$, but instead
of fixing  the cut-off parameter $d_{QBX}$ 
(as is done there) we now vary it with
the scaling $d_{QBX} \sim r_c \sim h$ (cf. \S \ref{sect:qbxerrs}).  In this and all subsequent
computations using this scaling, we employ an upsampling layer with
$\dup=2 \ d_{QBX}$ and a factor $\kappa=2$. With these parameter scalings, the truncation and coefficient errors are fixed  as the number of panels is increased.  \tp{ The  parameter values are chosen so that these errors are negligible and the dominant source of error is   the $O(h^7)$ interpolation error. This is  consistent with the  data in Table  \ref{table2}.    }

\begin{table}[htbp]
\begin{center}
\begin{tabular}{| l  | l | l | l | l | l | l  | l |}
\hline
  $N_\theta(=N_\phi)$  & $d_{QBX}$ & $r_c$ & $l^2$ error & $l^\infty$
                                                           error &
                                                                   no. ops. &
                                                                              ops. ratio & \tp{EOC}\\
\hline
  $2$ & $1.4$ & $0.4$ &  $5.6  \times 10^{-4}$  & $6.0 \times 10^{-4}$ & $ 3.5 \times 10^4$ &    $~~~~-$  &  $~~-$  \\
 \hline 
  $4$  & $0.7$ & $0.2$ &  $1.3  \times 10^{-5}$  & $1.3  \times 10^{-5}$ & $ 2.0 \times 10^5$ & 5.7 &\tp{ 5.5} \\
 \hline 
  $8$ & $0.35$ & $0.1$ &  $2.7 \times 10^{-7}$  & $1.8  \times 10^{-7}$ & $9.7 \times 10^5$ & 4.9 & \tp{5.6} \\
 \hline 
  $16$ & $0.17$ & $0.05$ &  $2.5 \times 10^{-9}$  & $1.53 \times 10^{-9}$ & $ 4.5 \times 10^6$ & 4.6 & \tp{6.8}  \\
 \hline 
\end{tabular}
\end{center}
\caption{Relative $l^2$ and $l^\infty$
 error  in the computation of the
  on-surface double layer potential ${\cal D} Y_2^2$. The cut-off
  parameter $\dqbx$ and distance of the centers from the boundary $r_c$
  are scaled as $\dqbx \sim r_c \sim h$.   
Parameter values are $\kappa=16$, $p=30$ for the QBX region, and  $\dup=2 \cdot \dqbx$, $\kappa=2$ for the local upsampling region. 
The number of operations in the dot product of the local QBX correction
(\ref{eq:target_spec_quad}) is shown, as is the ratio of two of these values for $N_\theta$ and $2 \cdot N_\theta$  panels.}
\label{table2}
\end{table}

The parameter scaling in Table \ref{table2} has the additional benefit of conferring an $O(N)$ complexity on
the local QBX correction over all targets.   
To verify  this, we show
in the table
the number of operations in the local QBX computation (\ref{eq:target_spec_quad}) over all targets, as well as the ratio of operations for $N_\theta$ and $2 \cdot N_\theta$  panels.   In principle this ratio should be  $4$, since  the number of targets increases by a factor of $4$ when $N_\theta=N_\phi$ is doubled.  The observed ratio is not exactly $4$, due to the  discrete (and nonuniform) panel sizes in physical space, but approaches $4$ asymptotically in $N_\theta$. 

We turn now to computation of the double layer potential at targets
points that are off, but close to, the surface.  Figure \ref{fig2}
show an example calculation of the relative error in the off-surface
computation of the double layer potential ${\cal D} \sigma$ using the
same source density $\sigma=Y_2^2$.  In the figure we evaluate the
potential at a set of targets along the $x$-axis, i.e., at
$\theta= \pi/2$, $\phi=0$ just outside the sphere.  The figure
compares computations using our QBX method with those using a local
upsampling method in which the local correction (i.e., the correction
to the double layer integral for source points close to the target) is
computed directly, without any local expansion, using oversampled
15-point Gauss-Legendre quadrature.  The two methods agree far enough
away from the sphere, but the upsampling method starts to lose
accuracy when the target point is about $4$ to $5$ grid points away
from the boundary (based on the upsampled grid spacing). Sufficiently
close to the boundary the error using upsampling alone is $O(1)$,
regardless of the fineness of the grid.  In contrast, the QBX method
maintains accuracy all the way up to the boundary.

\begin{figure}
\vspace{-.75in}
  \center{\includegraphics[width=.75\textwidth]{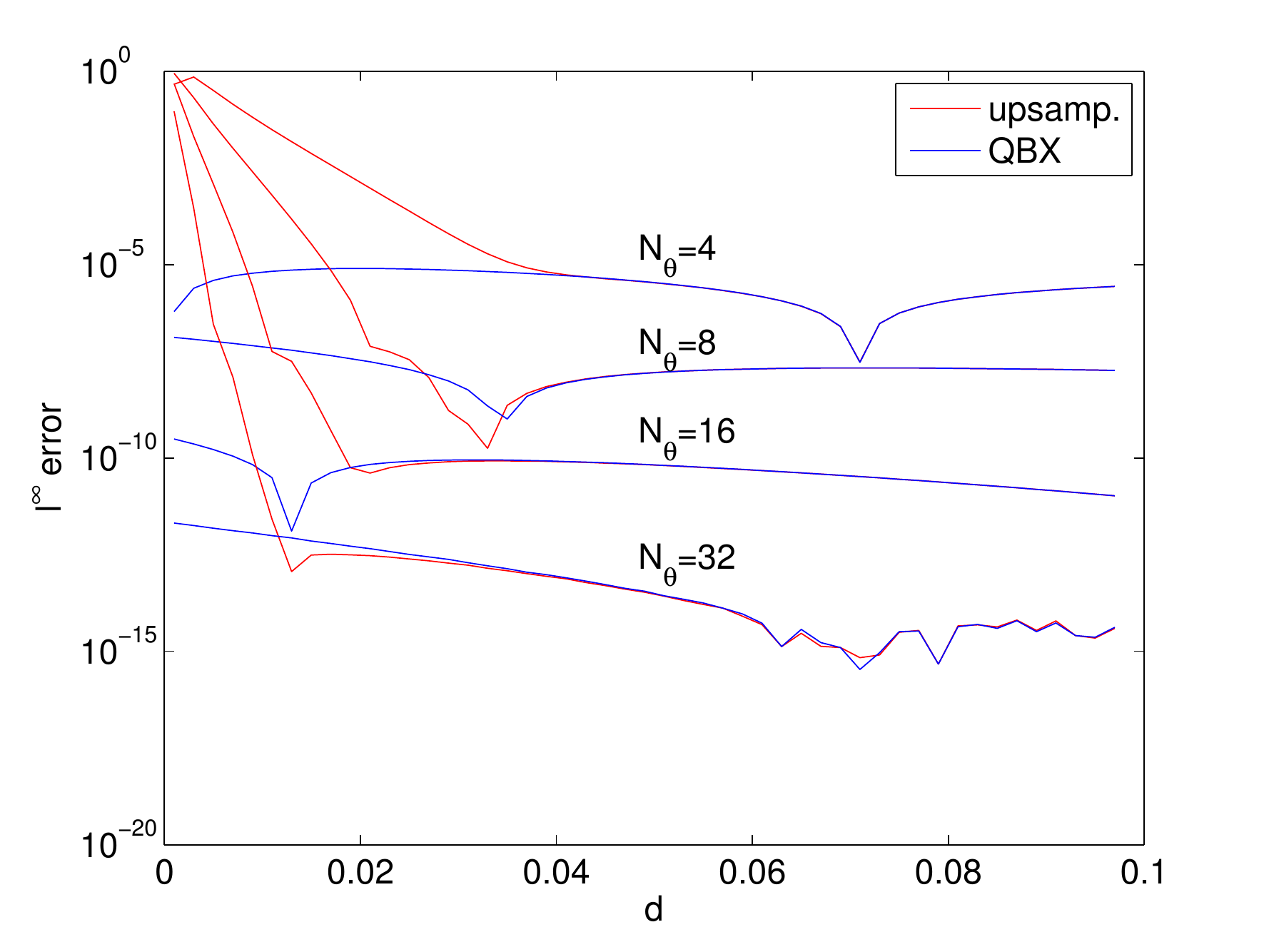}}   
  \caption{Comparison of relative $l^\infty$ error using the QBX method and  a local upsampling method to compute ${\cal D} Y_2^2$ at a target point a distance $d$ from the surface of the unit sphere. The different curves correspond to different numbers of panels $N_\theta(=N_\phi)$.  Parameter values are as in Table \ref{table1} ($p=20$). The oversampling factor for the local upsampling method is $\kappa=8$.  } 
  \label{fig2} 
\end{figure}

\subsection{Solution of the integral equation for different domains}


\subsubsection{Single sphere}

A more involved test case is to solve the second kind integral
equation (\ref{modified_second_kind}) or (\ref{eq:interior_second_kind}) that arises in the Green's function formulation of the Dirichlet problem. 
We use a Nystr{\"o}m method where the unknowns are the point values of the
density $\sigma$ at the target nodes, and employ GMRES to solve
the discrete equations
(\ref{eq:interior_discrete}) or (\ref{eq:exterior_discrete})
iteratively. At each iteration, the double layer potential is computed
via the target-specific QBX method with precomputation. Except where
noted, we set the GMRES tolerance to $10^{-10}$.  After solving for
$\sigma$, we use the  double layer representation
to evaluate the potential $u$ at a collection of target points.  


As a first example, we specify boundary data on the unit sphere to be
that generated by an exact solution from
(\ref{exact_solution_int_ext}), and then compute the solution to the
interior/exterior Dirichlet problem.  The error in the computed
solution can  be assessed by comparison with the analytical
solution.  The results of some example computations are shown in Table
\ref{table3}.  The magnitude of the errors are similar to those in the
calculation of layer potentials alone. Note the number of iterations
in the GMRES calculation typically decreases as the number of panels
is increased, which is also observed in our other numerical tests.

\begin{table}[htbp]
\vspace{-.2in}
\begin{center}
\begin{tabular}{ |l | l | l | l | l | l | }
\hline
 side & $N_\theta(=N_\phi)$ & target $r_*$ &  $l^2$ error & $l^\infty$ error & its. \\
\hline
 int  & $4$ & $\sigma$  & $3.5  \times 10^{-6}$ & $5.0  \times 10^{-6}$ & 7 \\
       &         &  0.99& $6.6  \times 10^{-6}$  & $5.4  \times 10^{-6}$ & \\
       &         &  0.5& $7.1 \times 10^{-7}$  & $2.9  \times 10^{-7}$ & \\
\cline{2-6}
   & $8$ & $\sigma$  & $8.5  \times 10^{-8}$  & $1.0  \times 10^{-7}$ &  11\\ 
       &         &  0.99& $4.5  \times 10^{-8}$  & $4.9  \times 10^{-8}$ & \\
       &         &  0.5& $2.6  \times 10^{-8}$  & $1.3 \times 10^{-8}$ & \\
\cline{2-6}
    & $16$ & $\sigma$  & $2.7  \times 10^{-9}$  & $3.5  \times 10^{-9}$ & 4 \\
       &         &  0.99& $4.2 \times 10^{-10}$  & $2.7 \times 10^{-10}$ & \\
       &         &  0.5& $5.1 \times 10^{-11}$  & $2.4  \times 10^{-11}$ & \\ 
  \hline 
\hline
 ext  & $4$ & $\sigma$  & $4.1  \times 10^{-6}$ & $6.2 \times 10^{-6}$ & 6 \\
       &         &  1.01& $7.9  \times 10^{-6}$  & $7.1  \times 10^{-6}$ & \\
       &         &  1.5& $2.7  \times 10^{-6}$  & $9.9  \times 10^{-7}$ & \\
\cline{2-6}
   & $8$ & $\sigma$  & $3.2 \times 10^{-8}$  & $7.3  \times 10^{-8}$ &  5\\ 
       &         &  1.01& $1.7  \times 10^{-7}$  & $2.2  \times 10^{-7}$ & \\
       &         &  1.5& $4.4  \times 10^{-8}$  & $2.7  \times 10^{-8}$ & \\
\cline{2-6}
    & $16$ & $\sigma$  & $8.8  \times 10^{-10}$  & $1.1 \times 10^{-9}$ & 3 \\
       &         &  1.01& $1.6 \times 10^{-9}$  & $8.9 \times 10^{-10}$ & \\
       &         &  1.5& $1.1  \times 10^{-10}$  & $5.5  \times 10^{-11}$ & \\
\hline
\end{tabular}
\end{center}
\caption{ Errors in evaluating the solution to an interior/exterior Dirichlet problem at a set of targets on a sphere of radius $r_*$, via solving a second kind integral equation for the density $\sigma$.  Boundary data is given by (\ref{exact_solution_int_ext}) with $l=m=2$ and $\rho=1$. A $\sigma$ in the $r_*$ column denotes that the errors are listed for $\sigma$.  GMRES iteration counts are shown in the last column.  Fixed parameter values are $\kappa=8, p=20, r_c=0.2, \dqbx=0.7$.     }
\label{table3}
\end{table}



Another test of our method is shown in Table \ref{table4}. For an exterior boundary value problem, we now generate an exact solution to Laplace's equation by introducing a collection of point charges in the interior region.   
This exact solution is then used to provide Dirichlet data for our boundary value problem, which is solved using an integral equation. We test the accuracy of our solution at a collection of target points on a sphere of radius $r_*$  surrounding the domain $D$.   

The results of this exterior calculation are  shown in Table \ref{table4}.  For both of the examples in Tables \ref{table3} and \ref{table4}, the error in solving the integral equation and evaluating the solution near the surface $\partial D$  is roughly  the same order of magnitude as the error in evaluating the double layer potential alone (cf. Tables \ref{table1} and \ref{table2}).   We note that accuracy is maintained at a similar level when the target $r_*$ is even closer to $1$.
This shows that the method can compute within a specified accuracy for target points that are arbitrarily close to the domain boundary.

\begin{table}[htbp]
\begin{center}
\begin{tabular}{ |l | l | l | l | l | l | }
\hline
 side & $N_\theta(=N_\phi)$ & target $r_*$ &  $l^2$ error & $l^\infty$ error & its. \\
\hline
 ext  & $4$ & $1.005$  & $1.4  \times 10^{-5}$ & $1.9  \times 10^{-5}$ & 10 \\
       &         &  1.5& $2.8  \times 10^{-6}$  & $1.9  \times 10^{-6}$ & \\
\cline{2-6}
   & $8$ & $1.005$  & $6.2  \times 10^{-7}$  & $8.0  \times 10^{-7}$ & 9\\
 
       &         &1.5& $8.9  \times 10^{-8}$  & $5.4  \times 10^{-8}$ & \\
\cline{2-6}
    & $16$ & $1.005$  & $5.8  \times 10^{-9}$  & $7.2  \times 10^{-9}$ & 8 \\
       &         &  1.5 & $1.5 \times 10^{-10}$  & $8.8 \times 10^{-11}$ & \\
  \hline 
\end{tabular}
\end{center}
\caption{ Accuracy of exterior Dirichlet computation for a domain with
  a single sphere, when  boundary data is generated by $49$ point
  charges spread around a spherical interior surface  with radius
  $\rho=0.5$. 
Errors are evaluated at a set of targets on a sphere of radius $r_*$.   Other parameter values are as in Table \ref{table3}.
 }
\label{table4}
\end{table}





\subsubsection{Nonspherical shape}

Table \ref{table5} illustrates the accuracy of our method for  some  example nonspherical geometries.  In (a) we consider a triaxial ellipsoid with semi-major axes   lengths  $1/2, 1$, and  $2$.  In (b) we choose a shape with four-fold symmetry in the $\phi$-direction,  
whose boundary is described by 
$\bx (\theta, \phi ) = \rho_s (\theta, \phi) {\pmb \omega}(\theta, \phi)$ where
$\rho_s(\theta, \phi) = 1 + \varepsilon \sin^2 \theta \cos 4 \phi$ and ${\pmb \omega} (\theta, \phi)$ is the standard spherical-coordinate parameterization of a unit sphere,  and we take $\varepsilon=0.3$. This form satisfies the requirement that $\rho_s$ is independent of $\phi$ at $\theta=0, \pi$ and that $\partial_\theta \rho_s (\theta=0, \phi) = \partial_\theta \rho_s (\theta=\pi, \phi) =0$, which is necessary for smoothness of the surface shape at the poles. We introduce a collection of unit point charges in the interior of each shape, spread over a spherical surface of radius $\rho=0.2$, and use the point charges to generate boundary data for the exterior problem. The boundary shapes and interior point charges are shown in Figure \ref{fig4a}.

\begin{figure}
\vspace{-.5in}
  \center{\includegraphics[width=.49\textwidth]{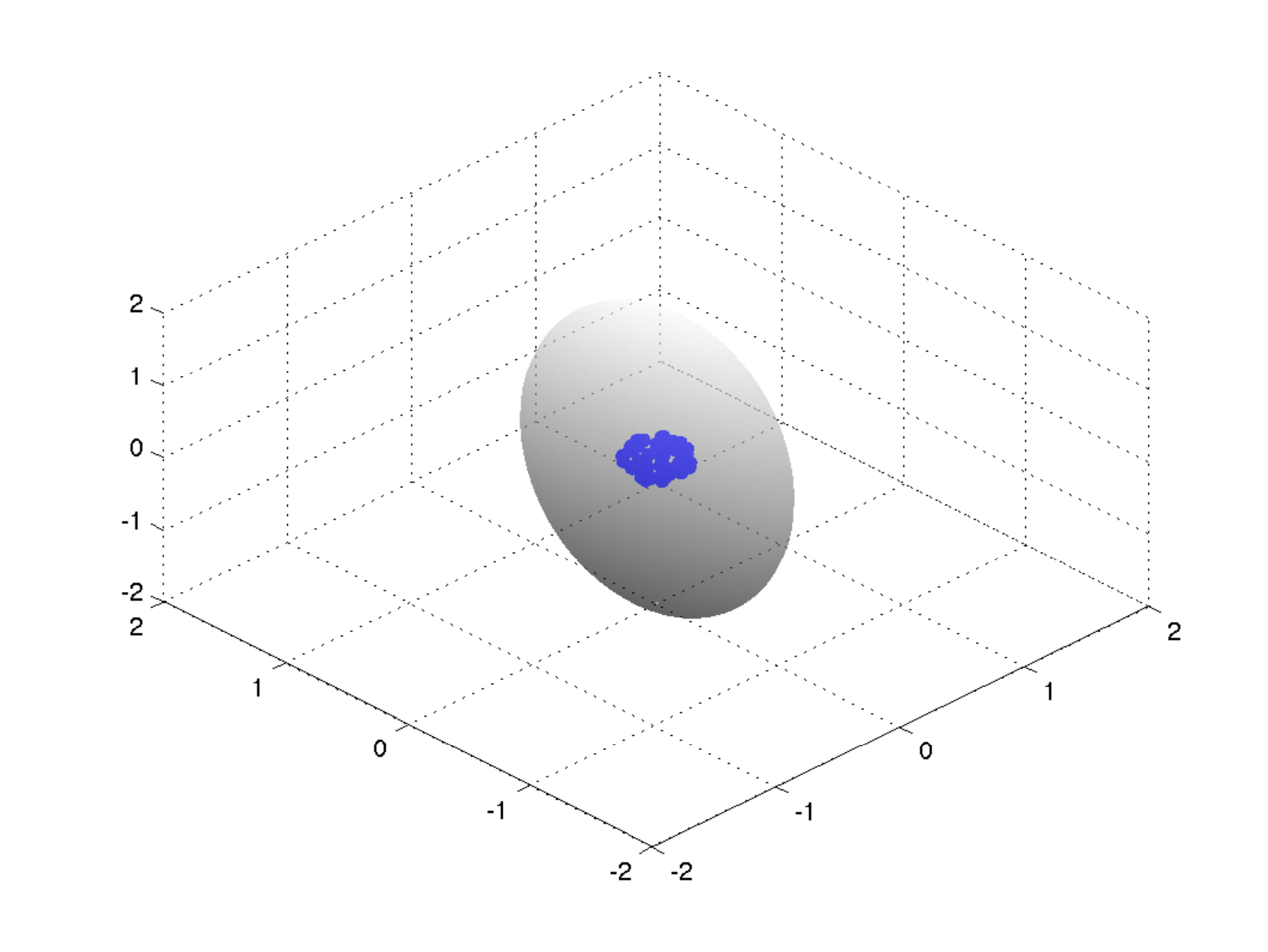}
\includegraphics[width=.49\textwidth]{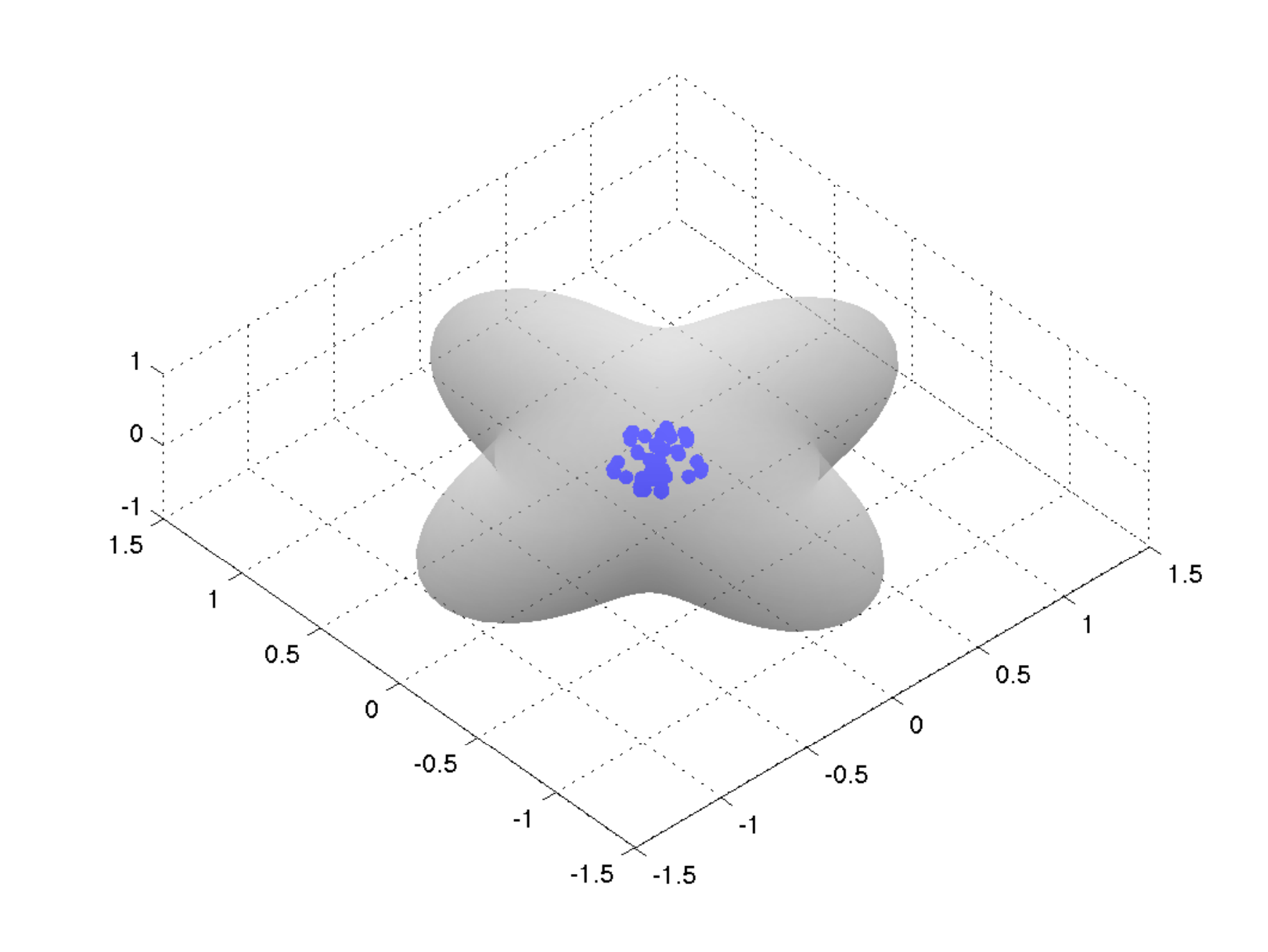}}   
  \caption{Boundary shapes and point charges for the tests of the solution
to the exterior Dirichlet problem in Table \ref{table5}. Left:
Triaxial ellipsoid, right: four-fold symmetry.} 
  \label{fig4a} 
\end{figure}

In these examples, we  employ  the same scaling of parameters as in Table \ref{table2};  namely  $\dqbx \sim r_c \sim h$.  The support of the region where the local QBX correction is made  then shrinks for each increase in $N_\theta$, which gives a  storage and computation cost of $O(1)$ per target. 

The results are shown in Table \ref{table5}.
The decrease in error with $N_\theta$ for the triaxial ellipsoid  is roughly similar to that which is observed for a  spherical shape. For the four-fold symmetric shape, the error does not decay as rapidly with $N_\theta$. This is possibly due to the presence of both positive and negative signed curvature  along the surface.

\begin{table}[htbp]
\vspace{-0.in}
\begin{center}
(a) Triaxial ellipsoid

\begin{tabular}{| l | l | l | l | l | l | l | }
\hline
  $N_\theta(=N_\phi)$ & $\mbox{target}~ r_*$ & $\dqbx$ & $r_c$ & $l^2$ error & $l^\infty$ error & its. \\
\hline
  $2$ & $1.005$ & $0.7$ & $0.2$ &  $2.9 \times 10^{-4}$  & $1.4 \times 10^{-3}$  & 16 \\
$$ & $2.5$ & &  &  $9.7 \times 10^{-5}$  & $1.0 \times 10^{-4}$  &   \\
\hline 
  $4$ & $1.005$ & $0.7$ & $0.2$ &  $3.3 \times 10^{-6}$  & $4.6 \times 10^{-6}$ & 18  \\
$$ & $2.5$ & $$ & $$ &  $1.9 \times 10^{-6}$  & $2.9 \times 10^{-6}$ &  \\
 \hline 
  $8$ & $1.005$ & $0.35$ & $0.1$ &  $1.7 \times 10^{-7}$  & $1.9 \times 10^{-7}$ &  17\\
$$ & $2.5$ & $$ & $$ &  $3.8 \times 10^{-9}$  & $1.2 \times 10^{-8}$ &  \\
 \hline 
  $16$ & $1.005$ & $0.17$ & $0.05$ &  $4.0 \times 10^{-9}$  & $5.2 \times 10^{-9}$  & 16  \\
$$ & $2.5$ & $$ & $$ &  $4.0 \times 10^{-11}$  & $5.9 \times 10^{-11}$  &   \\
 \hline 
\end{tabular}

(b) Four-fold symmetric shape
\vspace{.3in}
\begin{tabular}{| l | l | l | l | l | l | l | }
\hline
  $N_\theta(=N_\phi)$ & $\mbox{target}~ r_*$ & $\dqbx$ & $r_c$ & $l^2$ error & $l^\infty$ error & its. \\
\hline
  $2$ & $1.005$ & $1.4$ & $0.4$ &  $9.4 \times 10^{-2}$  & $6.8 \times 10^{-2}$  & 17  \\
$$ & $1.8$ & &  &  $7.5 \times 10^{-2}$  & $5.1 \times 10^{-2}$  &   \\
\hline 
  $4$ & $1.005$ & $0.7$ & $0.2$ &  $3.3 \times 10^{-5}$  & $3.2 \times 10^{-5}$ & 17  \\
$$ & $1.8$ & $$ & $$ &  $8.1 \times 10^{-5}$  & $5.6 \times 10^{-5}$ &  \\
 \hline 
  $8$ & $1.005$ & $0.35$ & $0.1$ &  $2.0 \times 10^{-6}$  & $3.1 \times 10^{-6}$ &  17\\
$$ & $1.8$ & $$ & $$ &  $4.3 \times 10^{-7}$  & $3.2 \times 10^{-7}$ &  \\
 \hline 
  $16$ & $1.005$ & $0.17$ & $0.05$ &  $9.4 \times 10^{-8}$  & $7.9 \times 10^{-8}$  &  16 \\
$$ & $1.8$ & $$ & $$ &  $3.5 \times 10^{-10}$  & $2.3 \times 10^{-10}$  &   \\
 \hline 
\end{tabular}
\end{center}
\caption{Accuracy of  exterior Dirichlet computation for nonspherical domains. Boundary data is generated by $49$ point charges spread on an interior spherical surface of radius $\rho=0.2$.  Errors are evaluated at a set of targets on a larger surrounding surface of the same shape, but scaled by 1.005, and on a spherical surface of radius $r_*=2.5$ in (a) and $r_*=1.8$ in (b).  Other parameters values are as in Table \ref{table2}.  }
\label{table5}
\end{table}




\subsection{Two spheres close to touching } 

\begin{figure}[htbp]
\vspace{-.0in}
\center{
\resizebox{7cm}{!}{ 
 \includegraphics[width=.75\textwidth]{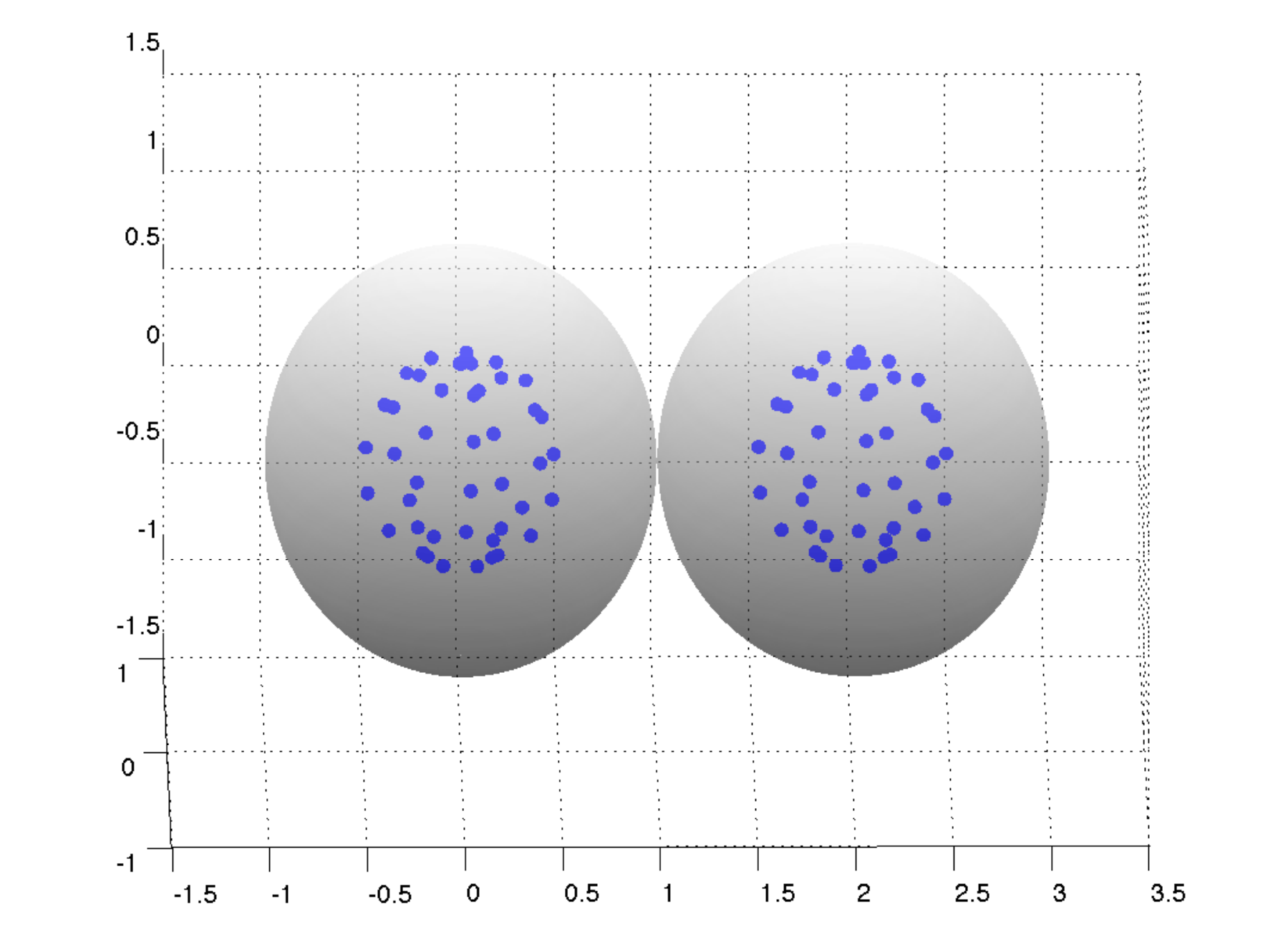}}
} 
\put(-170,100){{ $D_1$}}
\put(-45,100){{ $D_2$}}
 \caption{Boundary locations and point charges for the test of the solution
 in Table \ref{table6}. The solution is evaluated at a set of targets on a spherical surface that surrounds $D_1$ and bisects the gap between the two spheres.} 
  \label{fig4} 
\end{figure}

An example using the full target-specific QBX algorithm to solve an exterior Dirichlet problem for two nearly touching spheres is given in Table \ref{table6}.  Let $D_1$ and $D_2$ denote the two spherical regions, each  with unit radius. We introduce point sources inside $D_1$ and $D_2$ at a radius $\rho=0.5$ with respect to each region's center to generate boundary data.
After solving the integral equation for the density  $\sigma$, we evaluate the solution $u(\bx)$ at a collection of targets located on a spherical surface that surrounds $D_1$. The  radius $r_*$ of the target surface is chosen to  bisect the gap between $D_1$ and $D_2$. This is a challenging test case  for our method. 

\begin{table}[htbp]
\begin{center}
\begin{tabular}{ |l | l | l | l | l | l | }
\hline
 side & $N_\theta (=N_\phi)$  & target $r_*$ &  $l^2$ error & $l^\infty$ error & its. \\
\hline
 ext  & $2$ & $1.005$  & $7.1  \times 10^{-4}$ & $5.5  \times 10^{-4}$ & 31 \\
\cline{2-6}
   & $4$ & $1.005$  & $6.6  \times 10^{-6}$  & $1.1 \times 10^{-5}$ & 34\\
\cline{2-6}
    & $8$ & $1.005$  & $2.1  \times 10^{-7}$  & $3.1  \times 10^{-7}$ & 27\\
\cline{2-6}
    & $16$ & $1.005$  & $1.7  \times 10^{-9}$  & $1.3  \times 10^{-9}$ & 25\\
  \hline 
\end{tabular}
\end{center}
\caption{ Accuracy of exterior Dirichlet computation for a domain with
  two unit spheres, with centers a distance $d=2.01$ apart. Boundary
  data is generated by $49$ point charges located on a spherical
  surface inside each sphere  with radius $r_{p}=0.5$. 
Errors are evaluated at a set of targets on a sphere of radius $r_*$ surrounding $D_1$.  The number of panels $N_\theta$ is per sphere. Other parameter values are as in Table \ref{table2}. }
\label{table6}
\end{table}

The error shows good convergence as the number of panels is increased. This demonstrates the accuracy of our method in computing solutions to boundary value problems for closely spaced surfaces.

\begin{figure}
\centering
\begin{subfigure}[t]{.5\textwidth}
  \centering
  \includegraphics[width=\textwidth]{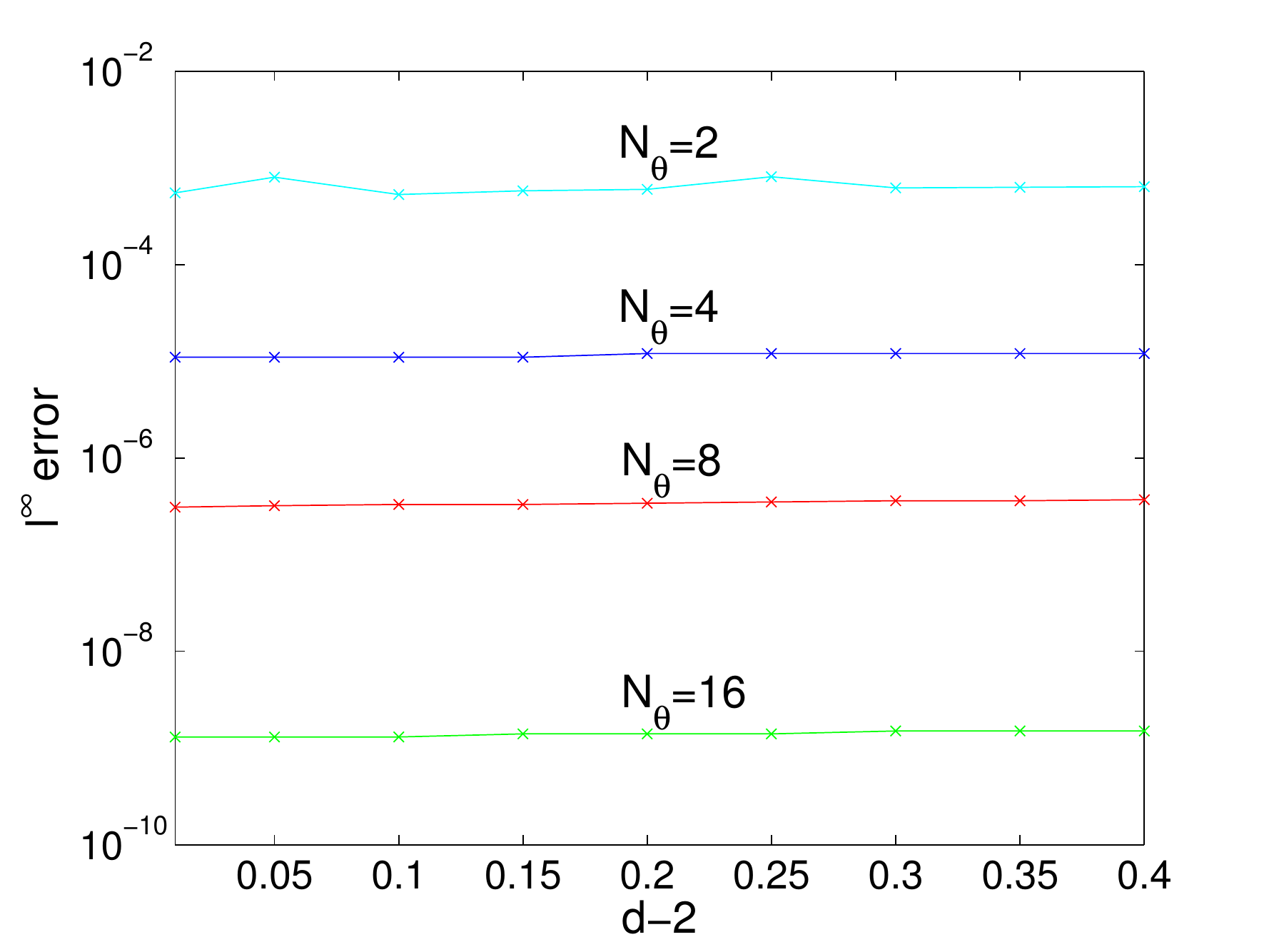}
  \caption{$l^\infty$ error for the  exterior Dirichlet computation with two unit spheres versus surface separation distance $d-2$. Boundary data, parameter values, and computation of error are as in Table \ref{table6}. }
  \label{fig5}
\end{subfigure}%
~~~~~
\begin{subfigure}[t]{.5\textwidth}
  \centering
  \includegraphics[width=\textwidth]{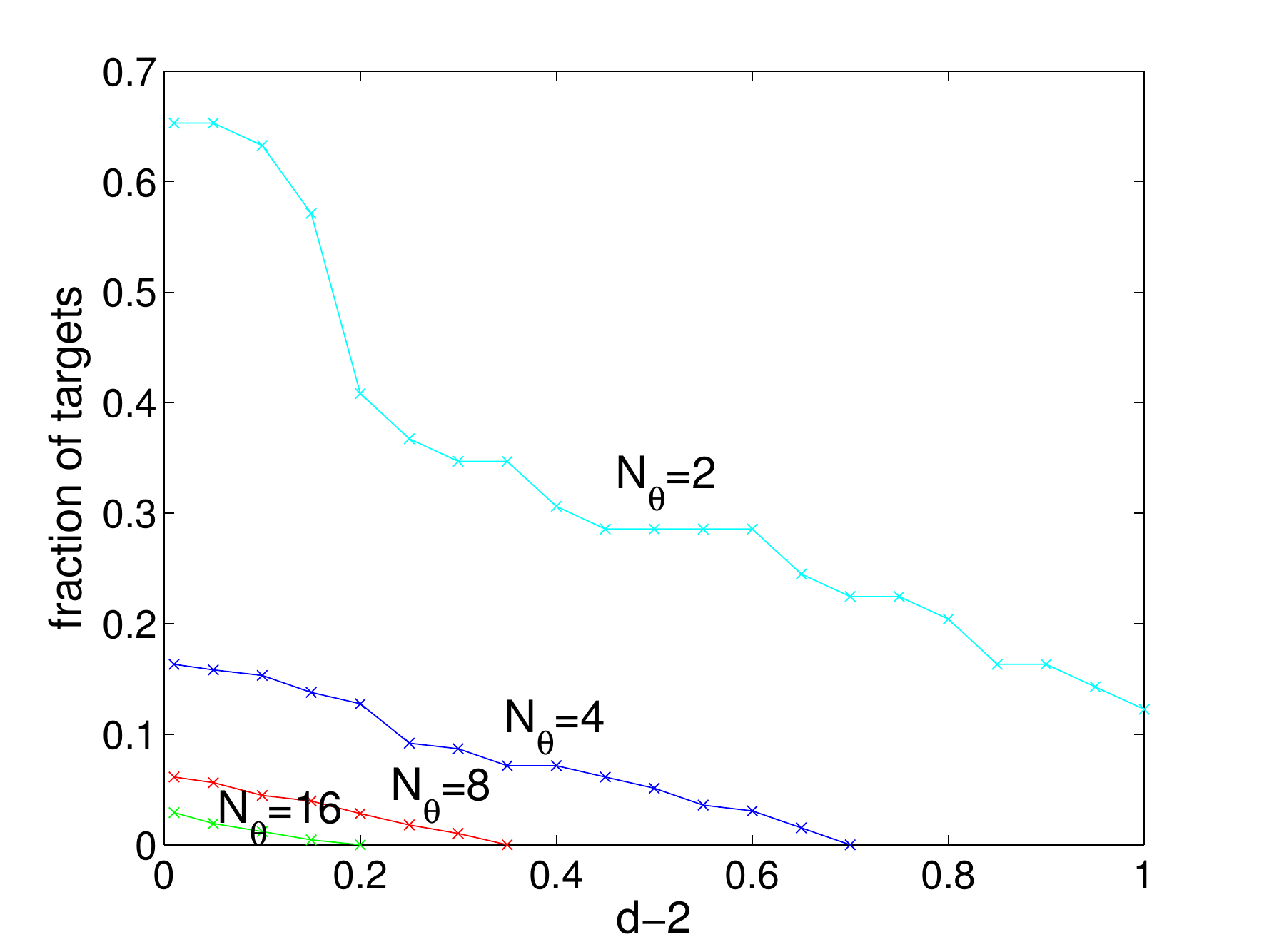}
  \caption{Fraction of targets for which the on-the-fly local QBX
    correction is made, versus center separation distance $d$, for the
    two-sphere example in Table \ref{table6}. Parameter values are the
    same as in Figure \ref{table6}.}
  \label{figure6}
\end{subfigure}
\caption{Effect of separation distance for two sphere problem}
\label{fig5and6}
\end{figure}


Figure \ref{fig5} shows 
the $l^\infty$ error for the two-sphere calculation in Figure \ref{fig4} as the  surface separation distance $d$ is varied. The error is found to be    uniform in the separation distance.  

We comment on the efficiency of our method for nearly touching particles.
When performing the on-surface computation for a single particle, or
for multiple particles which are well-separated, there is no need to
make the on-the-fly local QBX correction for the interaction of target
and source points that lie on different particle surfaces. In this
case, all of the local QBX corrections are for the interaction of
target/source points that lie on the same surface. These can be
precomputed, and the efficiency of the full method is about the same
as  for computing the global integral using the original grid. 

In the computation for two nearly touching particles, however, the on-the-fly local QBX correction is made for target points on each particle that are near the other particle's surface. 
Figure \ref{figure6} shows the fraction of target points for which the
on-the-fly local correction is made in the two-sphere example in Table
\ref{table6}, plotted versus separation distance $d$.  At higher
resolutions, the fraction of targets that require the local QBX
correction decreases.  This is because of the decrease in $d_{QBX}$
with increasing resolution.  Thus, 
the
fraction of CPU time spent making the on-the-fly local correction
becomes negligible as resolution is increased. The fraction of
corrected targets also decreases with increasing separation $d$, as
expected.





\subsection{A larger problem}

\begin{figure}
\vspace{-0.6in}
  \center{\includegraphics[width=.85\textwidth]{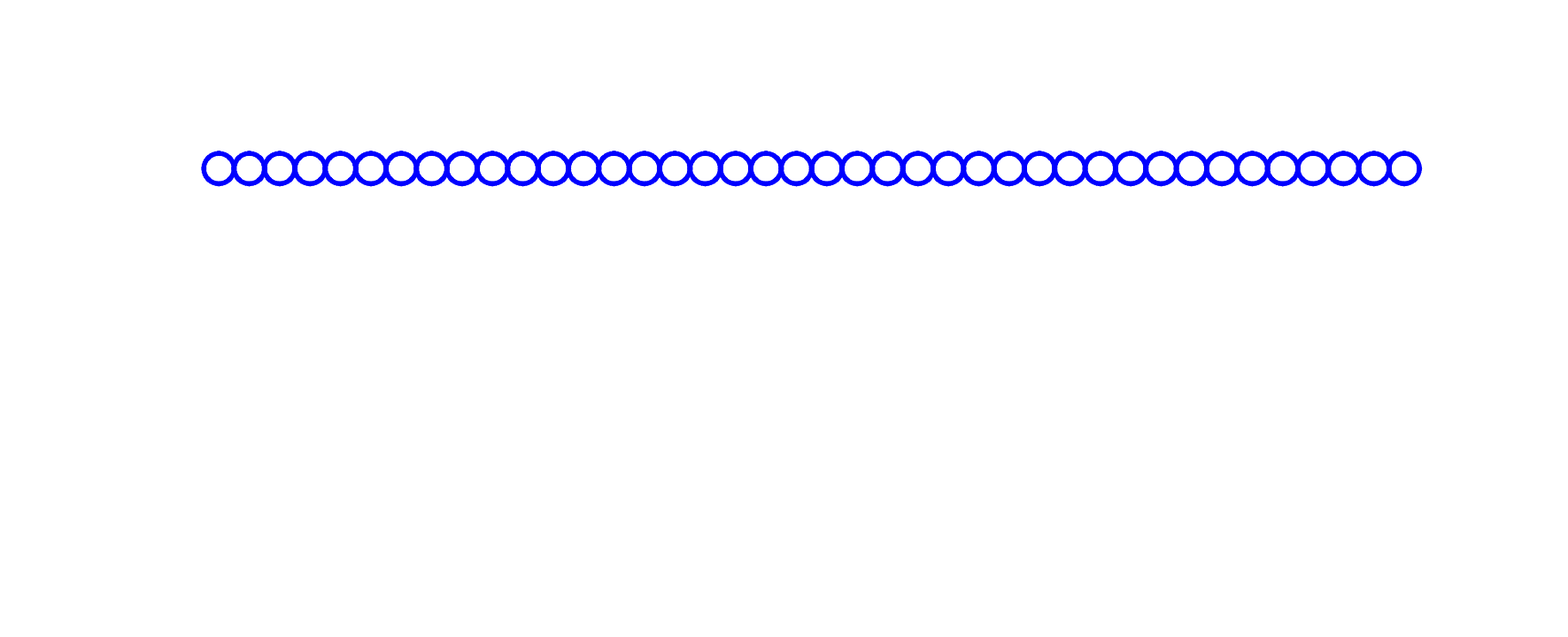}}   
  \vspace{-1.in}\caption{Configuration of 40 spheres of unit radius for the exterior Dirichlet computation in Table \ref{table7}. The distance between spheres is $0.01$. } 
  \label{figure8} 
\end{figure}

\begin{table}[htbp]
\begin{center}
\begin{tabular}{| l | l | l | l |}
\hline
  No. ~spheres & $l^2$ error & $l^\infty$ error & Its. \\
\hline
  $4$ & $2.0 \times 10^{-5}$ & $8.3 \times 10^{-6}$ & $45$  \\
  $10$ & $6.8 \times 10^{-5}$ & $2.2 \times 10^{-5}$ & $66$ \\
  $20$ & $3.6 \times 10^{-6}$ & $5.6 \times 10^{-6}$ & $96$ \\
  $30$ & $3.7 \times 10^{-6}$ & $5.7 \times 10^{-6}$ & $154$ \\
  $40$ & $2.5 \times 10^{-5}$ & $8.3 \times 10^{-6}$ & $267^*$ \\
  \hline  
\end{tabular}
\end{center}
\caption{ Accuracy of exterior Dirichlet computation for multiple unit spheres.  The number of panels is fixed at $N_\theta(=N_\phi)=4$ per sphere,  with other parameters as in  Table \ref{table2}.  Generation of boundary data and location of target surface are as
in Table \ref{table6}. The asterisk denotes that the GMRES tolerance was raised from $10^{-10}$ to $10^{-8}$ for the computation with $40$ spheres.  
 }
\label{table7}
\end{table}

Our last example considers the exterior Dirichlet problem for a domain with a larger number of unit spheres. We now fix the number  of panels per sphere at $N_\theta (= N_\phi)=4$, but vary the number of particles in the domain.  For simplicity, the particles are arranged in a simple linear configuration (see figure \ref{figure6}), with the distance between spheres fixed at $0.01$. The boundary Dirichlet data is generated in the same way as for the two-sphere example, i.e., by introducing point sources inside each sphere.  

Here we use a treecode, the implementation of which closely follows  \cite{Krasny},  to compute the global double layer integral. The treecode algorithm divides interface points into a nested set of clusters and approximates velocity interactions between a point and a distant cluster using Taylor's expansion. Interaction of nearby points is computed using direct summation.   The treecode algorithm introduces two additional parameters, the order of the Taylor's series expansion $p_T$, and a separation parameter $\epsilon_T$. For the particle-cluster interaction, the Taylor's approximation is chosen over the direct sum when $\epsilon_T>  R_T/D_T$, where $R_T$ is the radius of the cluster and $D_T$ is the distance from the target point to the cluster center. A decrease in $\epsilon_T$ improves the accuracy of the treecode, but slows the computations. Our computations use $\epsilon_T=0.2$ and $p_T=5$. For a small number of spheres, these values were found to give results that are indistinguishable from direct summation, for the selected number of panels.

The results of our computations are shown in Table \ref{table7}. The
observed accuracy, which is limited by the discretization error, is
found to be roughly independent of the number of spheres, but the
number of GMRES iterations grows due to ill-conditioning.
In \cite{Klinteberg2016},
a similar behavior is observed as the number of spheroidal particles
grows, and the problem is alleviated by introducing a block diagonal
preconditioner.


\subsection{Operation count for TSQBX versus QBX}
\tg{
Truncating the expansions of the double layer potential at an
expansion order $n=p$, 
the target specific QBX (TSQBX) expansion (\ref{eq:qbx_loc_exp_h}) 
has $p+1$ terms, where as the corresponding QBX expansion (cf. (\ref{eq:qbx_sum}))  has $(p+1)^2$ terms. 
Hence, there will be more coefficients to compute and more terms to
evaluate and sum in the general QBX expansion as compared to the target specific
expansion. On the other hand, the coefficients in the general QBX expansion do
not depend on the target point. Hence, several target points can
use the same expansion for evaluation, if they fall within the radius
of convergence of the expansion. 
} 

Let us choose a set $\tau=\{ \bx_i: i=1, \ldots T\}$ of $T$ targets
inside a sphere of radius $r$ centered at a center $\bc$, and ask for the value
of $T$ at which the QBX computation of the double layer potential on
$\tau$ is similar in cost to the the TSQBX computation.  

\tg{   
The cost of evaluating the QBX expansion at $T$ targets equals the
cost of computing the $(p+1)^2$ coefficients $z_{nm}^h$ plus the cost
of evaluating the truncated sum of the form (\ref{eq:qbx_sum}) at the targets:
\be \label{QBX_complexity}
QBX ~ complexity  = c_{QBX} N_Q (p+1)^2+ c_T  T (p+1)^2.
\ee
Here $c_{QBX} N_Q$ is the cost of evaluating  one of the coefficients
$z_{nm}^h$ using the  localized discrete version of   (\ref{eq:qbx_znm}), 
with $N_Q$ the total number of (upsampled) quadrature  points, and
$c_T$ is the cost of evaluating a single term in the sum
(\ref{eq:qbx_sum}), given the coefficients $z^h_{nm}$.  By the same
reasoning, the cost of evaluating the TSQBX expansion
(\ref{eq:qbx_loc_exp_h}) at $T$ targets is 
\be 
\label{TSQBX_complexity} 
TSQBX ~ complexity = c_{TSQ} N_Q T (p+1) + c_S T (p+1).
\ee Both terms are now proportional to $p+1$ instead of $(p+1)^2$, but on
the other hand, the first term is  proportional to $T$ as
coefficients are recomputed for each target. 
Typically
$c_{TSQ} N_Q>>c_S$ so we can neglect the second term in (\ref{TSQBX_complexity}).
} 

\tg{   
The number of targets $T$ for which the two methods have comparable
complexity is then found by equating (\ref{QBX_complexity}) and
(\ref{TSQBX_complexity}).  The result is 
\be \label{number_targets}
T=\frac{c_{QBX}N_Q(p+1)}{c_{TSQ}N_Q-c_T(p+1)}.  
\ee 
A rough count of the
number of operations to compute $z^h_{nm}$ and $z_n^h$ suggests
that $c_{QBX} \approx 2 c_{TSQ}$, for the same set of panels and
quadrature points, due to the slightly more complicated integrand in
(\ref{eq:qbx_znm}) compared to (\ref{eq:qbx_znD}).  We also expect
that we can neglect the second term in the denominator of
(\ref{number_targets}) compared to the first term. Taking into account
these simplifications, we have 
\be \label{target_crossover} 
T \sim 2(p+1).  
\ee
} 

\tg{   
When using QBX for on-surface evaluations, one center is often used
per target point. Even if a few centers use the same expansion, 
it is clearly beneficial to use the target specific expansion. 
This is also true e.g.\ when the double layer potential is computed on
nearly touching spheres as in our examples. On the other hand, if the
solution is to be computed in a dense set of points close to a
surface, such that the number of points per center/expansion grows
past $2p$, it will be beneficial to use the original QBX
expansion. 
} 

\section{Conclusions}

We have developed a local target specific QBX method to evaluate singular and nearly singular layer potentials in 3D, and have applied it to boundary value problems for Laplace's equation in multiply-connected domains. 
Here, the approach to QBX is different from \cite{Rachh2016}  in that our QBX algorithm is not designed to be integrated into the FMM or other hierarchical fast algorithm.  Our method takes a local approach, and the QBX correction is only applied over those surface  panels that are close to the evaluation point, for which a standard quadrature has large error. For other panels the integral is well resolved using standard quadrature, and the QBX correction is not necessary.  
\tb{We consider only domains with smooth boundaries, but the method can potentially be adapted to nonsmooth domains (e.g., with corners) by applying special quadratures \cite{Helsing2016}. }

Our local QBX method is designed to  have $O(N)$  complexity for $N$ surface discretization points. This is achieved  by (i) combining with a fast hierarchical method such as an FMM or treecode to compute the contribution to the layer potentials from source panels that are outside of the local correction patch, and (ii) scaling numerical parameters so that the local QBX correction at a given evaluation point can be computed with $O(1)$ complexity.   We  emphasize that the fast hierarchical algorithm in step (i)  is  decoupled from the QBX expansions, which simplifies the implementation of our method.  A detailed error analysis developed here and in \cite{Klinteberg2016b}  aids in the selection of parameters for step (ii).

The  QBX expansion coefficients are computed by oversampled Gauss-Legendre quadrature, which can be expensive, but our method is accelerated by several key choices. For one, we  make use  of a target specific expansion in Legendre polynomials, rather than the usual spherical harmonic or Taylor's series expansions.  The target specific expansion requires only $p$ terms to achieve the same accuracy as $O(p^2)$ spherical harmonic expansion terms or $O(p^3)$ terms of a Taylor's expansion. Secondly, we precompute the contributions to the QBX expansion coefficients from panels that lie on the same surface as the evaluation point, which gives a significant speed-up. 
Contributions to the expansion coefficients from panels that lie on a different surface than the evaluation point are computed 'on-the-fly'. We do this with  acceptable efficiency  by employing the target specific expansion, designing an adaptive oversampling scheme, and making a judicious choice of numerical parameters.  

Finally, although we consider the specific application to Laplace's equation, the method developed here can be extended to other applications involving a boundary integral formulation. This includes problems in Stokes flow, potential flow, electromagnetics, and elasticity theory.  We intend to apply our method to some of these applications, including those involving time-evolving interfaces, in future work.


\subsection*{Acknowledgments}
This work has been supported by the Knut and Alice Wallenberg Foundation under grant no. KAW2014.0338 and is gratefully acknowledged. 
The authors also gratefully  acknowledge support by the G{\"o}ran Gustafsson Foundation for Research in the Natural Sciences (A.K.T.), and the National Science Foundation grant DMS-1412789 (M.S.). 


\section{Appendix A: Equivalence of Cartesian Taylor expansions and spherical
  harmonics expansions}
\label{app:equivexp}

In this appendix, we want to explicitly show the equivalence of the
Cartesian Taylor expansion (\ref{Taylors_approx}) and the spherical harmonics expansion
(\ref{eqn:sph_expansion}) when they are truncated appropriately. 

Consider the recursion relation (\ref{eqn:rec_rel}) for the $b_{\bk}$ coefficients.
Introduce
\[
B_n(\bc,\bx,\by)=\sum_{\normk=n} b_{\bk}(\bc,\by)
(\bx-\bc)^\mathbf{k}, 
\]
such that
\begin{equation}
\frac{1}{|\bx-\by|}=\sum_{n=0}^{\infty}  B_n(\bc,\bx,\by)
\label{eqn:Bn_expansion}
\end{equation}
Now, multiply the recursion relation (\ref{eqn:rec_rel}) by
$(\bx-\bc)^{\bk}$ and sum over indices $\bk$ with $\normk=n$. 
This yields
\begin{align}
& nR^2\sum_{\normk=n} b_{\bk}(\bx-\bc)^{\bk}
-(2 n-1) \sum_{\normk=n} \sum_{i=1}^3
(y_i-(x_c)_i)b_{\bk-\ei}(\bx-\bc)^{\bk} \\
& \hspace{6.0cm} +(n -1) \sum_{\normk=n} \sum_{i=1}^3 b_{\bk-2\ei} (\bx-\bc)^{\bk} =0, 
\end{align}
which can be written as (omitting the argument of $B$), 
\[
nR^2 B_n -(2n-1)B_{n-1} \sum_{i=1}^3
((y_i-(x_c)_i)(\bx-\bc)^{\ei}+(n-1) B_{n-2} \sum_{i=1}^3 (\bx-\bc)^{2\ei}=0,
\]
and so
\[
nR^2 B_n -(2n-1)B_{n-1} (\by-\bc) \cdot (\bx-\bc)+(n-1) B_{n-2}
|\bx-\bc |^2=0.
\]
Introduce $r=| \bx -\bc|$, $\alpha=(\by-\bc)\cdot (\bx-\bc)$. 
With 
\begin{equation}
B_n(\bx,\bc,\by)=\frac{1}{R} \left(\frac{r}{R}\right)^n
P_n \left( \frac{\alpha}{rR} \right), \quad P_0=1, 
\label{eqn:defBn_w_Pn}
\end{equation}
the recursion for $P_n$ becomes, 
\[
nP_n(z)-(2n-1)zP_{n-1}(z)+(n-1)P_{n-2}(z)=0, \quad n=1,2,\ldots
\]
with $P_0=1$ and $P_{-1}=0$. 
This is the recursion for the Legendre polynomials, and hence $P_n(z)$ is the
Legendre polynomial of degree $n$.

The spherical harmonics expansion about a center $\bc$ is given in 
(\ref{eqn:sph_expansion}). This expansion is 
obtained from the expansion in Legendre polynomials (\ref{eqn:Pn_expand})
using the Legendre polynomial addition theorem. 
In the expansion with Legendre polynomials,  $\theta$ is the angle
between $\bx-\bc$ and $\by-\bc$.
In (\ref{eqn:defBn_w_Pn}), we have 
\[
\frac{\alpha}{rR}=\frac{(\by-\bc) }{|\by-\bc|} \cdot \frac{(\bx-\bc)}{| \bx -\bc|  }
\]
and hence $\alpha/(rR)=\cos(\theta)$ with $\theta$ as above, and we
have seen how to show the equivalence of the expansions
(\ref{Taylors_approx}) and (\ref{eqn:Pn_expand}).

From the above analysis we can conclude that the error incurred by
truncating the Taylor expansion (\ref{Taylors_approx}) after including
all spherical shells such that $\|\bk\| \le p$ is the same as the
error obtained when truncating the spherical harmonics expansion
(\ref{eqn:sph_expansion}) at $n=p$, i.e.\ once all spherical harmonics
up to degree $p$ have been included.

\section*{Appendix B: Truncation error estimates} \label{sec:AppendixB}

We derive an expression for the truncation error in the case of the the
single layer potential, with the result for the double layer potential in 
Theorem \ref{DL_error_nonplanar}
following similarly. Consider first the simplified case in which the
local correction to the single layer potential, which we denote by
$\slayer_L \sigma (\bx)$, involves integration over a planar
disk-shaped surface $\setofpanels$ of radius $\bar{R}<<1$, i.e.,
\be \label{single_layer_local} \slayer_L \sigma (\bx) = \int_\setofpanels
\sigma (\by) G (\bx,\by) ~dS_\by.  \ee Let the origin $O$ of a Cartesian
coordinate system $(x_1,x_2,x_3)$ be located at the center of
$\setofpanels$, with the $x_3$ axis normal to the disk (see Figure \ref{fig_trunc}).  We will compute
$\slayer_L \sigma (\bx)$ by expanding the Green's function
$G(\bx,\by)=1/(4 \pi |\bx-\by|)$ in a Taylor's series with respect to
$\bx$ about the point $\bc=(0,0,c)$, which lies either above or below
the center of the disk. We further assume that 
\be 
\bar{R}^2<<|c|<<\bar{R}<<1.  
\ee 
Since $\setofpanels$ is planar we set
$y_3=0$ and in an abuse of notation denote $\by=(y_1,y_2)$ and
$r_y = (y_1^2 + y_2^2)^{1/2}$.  For convenience set $x_1=x_2=0$, and
consider the target point to lie on the $x_3-$axis inside the radius
of convergence for the Taylor's series.

We  define $t=(c-x_3)/(c^2+r_y^2)^{1/2}$
and $z=c/(c^2+r_y^2)^{1/2}$ and write the Green's function as
\[
G(x_3,r_y)= \frac{1}{ 4  \pi}  \frac{1}{(c^2+r_y^2)^{1/2} (1-2zt+t^2)^{1/2}},
\]
where we suppress the explicit  dependence of $t$ and $z$ on $x_3, r_y$. The Taylor's series expansion   of the Green's function is provided by the generating function \cite{ArfkenWeber}
\be \label{generating_function}
\frac{1}{(1-2zt+t^2)^{1/2}}=  \sum_{n=0}^\infty P_n(z) t^n,
\ee
so that the error in truncating the Taylor's series after $p$ terms is
\be \label{error_truncation}
E_T=\frac{1}{ 4 \pi} \left| \int_\setofpanels \frac{\sigma(\by)}{(c^2+r_y^2)^{1/2}}  \sum_{n=p+1}^\infty P_n \left( \frac{c}{(c^2+r_y^2)^{1/2}} \right)  \left(  \frac{c-x_3}{(c^2+r_y^2)^{1/2}} \right)^n  ~dS_\by \right|.
\ee

We now assume the density $\sigma(\by)$ is a smooth function of $\by$, and  using standard multi-index notation,  expand it in a Taylor's series 
about $\by={\bf 0}$ for $|\by|<\bar{R}$:
\[
\sigma(\by)=\sum_{\| \bk \| \geq 0 } \frac{\by^\bk}{\bk !} \partial_\by^\bk \sigma(\by={\bf 0}),
\]
where $\bk=(k_1,k_2)$ is an integer multi-index with all $k_i \geq 0$, and $\|\bk \|=k_1+k_2$.
Substituting this into (\ref{error_truncation}) and writing the integral in polar coordinates yields
\begin{multline} \label{Error_p}
E_T= \frac{1}{4 \pi} \left| \sum_{\| \bk \| \geq 0} \frac{ \partial_\by^\bk \sigma(0)}{ \bk!}  F(\bk) \right. \\
 \left. \times \int_0^{\bar{R}} \frac{r_y^{\| \bk \| +1}}{(c^2+r_y^2)^{1/2}}   \sum_{n=p+1}^\infty P_n \left( \frac{c}{(c^2+r_y^2)^{1/2} }\right) \left( \frac{c-x_3}{(c^2+r_y^2)^{1/2} }\right)^n dr_y \right|,
\end{multline}
where 
\[
F(\bk) =  \int_0^{2 \pi} \cos^{k_1} \theta \sin^{k_2} \theta ~d \theta
\]
is  the angle integral.  This integral  is zero unless $k_1$ and $k_2$ are both even, so we subsequently take them to be even.  We now substitute the 
explicit representation of the Legendre function
\[
P_n(z)=\frac{1}{2^n} \sum_{j=0}^{\floor{\frac{n}{2}}} (-1)^j \binom{n}{j} \binom{2n-2j}{n} z^{n-2j},
\]
where $\floor{\cdot}$ is the floor function, into (\ref{Error_p}).  After some rearrangement, we can write the error (\ref{Error_p}) as
\begin{multline} \label{Error_p1}
E_T= \left|
 \sum_{\| \bk\| \geq 0}  a_\bk\sum_{n=p+1}^\infty \frac{(c-x_3)^n}{2^n}\sum_{j=0}^{\floor{\frac{n}{2}}} (-1)^j \binom{n}{j} \binom{2n-2j}{n} c^{n-2j}  \right. \\ \left.  \times \int_0^{\bar{R}} r_y^{\| \bk \| +1}
\left( \frac{1}{(c^2+r_y^2)^{1/2}} \right)^{2n-2j+1} \ dr_y  \right|,
\end{multline}
where $a_\bk= \frac{1}{4 \pi} \frac{ \partial_\by^\bk \sigma({\bf 0})}{ \bk!}  F(\bk)$. The final step of the truncation error analysis for a planar surface is to compute the $r_y$ integral  in the above equation.

To compute this integral, we carry out  $s=\frac{\|\bk\|}{2}+1$ integrations-by-parts, which yields
\begin{multline} \label{ry_int_by_parts}
\int_0^{\bar{R}} r_y^{\| \bk \| +1}
\left( \frac{1}{(c^2+r_y^2)^{1/2}} \right)^{2n-2j+1} \ dr_y  =\\
 -\sum_{m=1}^s  \frac{\Pi_{i=1}^{m-1}\left\{ \|\bk\|-(2i-2) \right\}}{\Pi_{i=1}^m \left\{ 2n-2j-(2i-1) \right\}} \frac{{\bar{R}}^{\|\bk\|-(2m-2)}}
{(c^2+{\bar{R}}^2)^{[2n-2j-(2m-1)]/2}}\\
+ \frac{\Pi_{i=1}^{s-1}\left\{ \|\bk\|-(2i-2) \right\}}{\Pi_{i=1}^s \left\{ 2n-2j-(2i-1) \right\}} c^{-2n+2j+2s-1},
\end{multline}
where we use the notation $\prod_{i=1}^0 f(i) =1$.
An explanation of  the terms in this equation is as follows. The first $s-1$ integrations
each give a boundary contribution at $r_y=\bar{R}$, which gives the first $s-1$ terms in the sum. There is zero boundary  contribution at $r_y=0$, due to the power of $r_y$ in the integrand. The final integration gives both a boundary contribution at $r_y=\bar{R}$, which is the $m=s$ term in the sum, and a boundary contribution at $r_y=0$, which is the final term.

Together, equations (\ref{Error_p1}) and (\ref{ry_int_by_parts}) provide an exact representation of the truncation error for a  planar surface.
However, substituting  (\ref{ry_int_by_parts}) into (\ref{Error_p1}) and combining  like terms, we see that the $nth$ term in the sum has a factor of $c^{-n+2s-1}$, which is large for small $s$ and $|c|<<1$,  coming from the expression in the third line of  (\ref{ry_int_by_parts}).
The crux of the analysis is to overcome this large factor. The subsequent estimates rely on the following two lemmas on binomial coefficients, which are proven in Appendix C. 

\begin{lemma}  \label{lemma_bin1}
For any integers $1 \leq m \leq \floor{\frac{n}{2}}$ and $2 \leq n<\infty$, the binomial coefficients satisfy the identity,
\be
\sum_{j=0}^{\floor{\frac{n}{2}}} \left\{ \frac{(-1)^j \binom{n}{j} \binom{2n-2j}{n}}{\prod_{i=1}^{m} [2n-2j-(2i-1)]} \right\}=0
\ee
\end{lemma}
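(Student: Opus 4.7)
The plan is to reduce the identity to the orthogonality relation $\int_{-1}^1 P_n(u) q(u) \, du = 0$ for any polynomial $q$ of degree less than $n$, by expressing the sum as an integral of $P_n$ against a low-degree polynomial.

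First, I would rewrite the denominator as
\[
\prod_{i=1}^m (2n-2j-2i+1) = 2^m \frac{\Gamma(n-j+1/2)}{\Gamma(n-j-m+1/2)},
\]
and invoke the Beta integral
\[
\frac{\Gamma(n-j-m+1/2)}{\Gamma(n-j+1/2)} = \frac{1}{(m-1)!} \int_0^1 t^{n-j-m-1/2}(1-t)^{m-1} \, dt,
\]
which is valid because the hypotheses $j, m \le \floor{n/2}$ yield $n-j-m+1/2 > 0$. Substituting this expression into the sum and exchanging summation with integration reduces the lemma to showing that
\[
\int_0^1 t^{n/2 - m - 1/2}(1-t)^{m-1} \sum_{j=0}^{\floor{n/2}} (-1)^j \binom{n}{j}\binom{2n-2j}{n} t^{(n-2j)/2} \, dt = 0,
\]
where I have factored out the $j$-independent piece $t^{n/2-m-1/2}$. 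By the explicit polynomial representation of the Legendre polynomial quoted immediately after equation (\ref{Error_p}) in the derivation, the inner sum equals $2^n P_n(\sqrt{t})$.

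After the substitution $u = \sqrt{t}$, the identity reduces to showing that
\[
\int_0^1 (1-u^2)^{m-1} u^{n-2m} P_n(u) \, du = 0.
\]
The weight $(1-u^2)^{m-1} u^{n-2m}$ is a polynomial in $u$ of degree $2(m-1)+(n-2m) = n-2$, whose parity is $(-1)^{n-2m}=(-1)^n$; this matches the parity of $P_n$, so the integrand is even and $\int_0^1$ equals $\tfrac{1}{2} \int_{-1}^1$. Orthogonality of $P_n$ against any polynomial of degree less than $n$ on $[-1,1]$ then gives the result.

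The main obstacle is recognizing the inner sum as $2^n P_n(\sqrt{t})$, which requires choosing the Beta integral so that, after factoring out a $j$-independent power of $t$, the remaining powers of $t$ line up with the variable $z^{n-2j}$ appearing in the Legendre polynomial formula. The hypotheses of the lemma enter at two essential places: $m \ge 1$ ensures the Beta integral is nondegenerate and that $(1-u^2)^{m-1}$ is a polynomial, while $m \le \floor{n/2}$ ensures $n-2m \ge 0$, so that the weight is a polynomial of degree exactly $n-2 < n$ — precisely the regime in which Legendre orthogonality forces the integral to vanish.
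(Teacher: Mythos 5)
Your proof is correct, but it takes a genuinely different route from the paper. The paper argues purely combinatorially: it writes out the binomial coefficients for the extreme case $m=\floor{n/2}$, cancels factors to reduce the sum to $\sum_{j}\binom{\floor{n/2}}{j}(-1)^j$, and then handles $1\le m<\floor{n/2}$ by observing that the leftover product $\prod_{i=m+1}^{\floor{n/2}}(2n-2j-(2i-1))$ is a polynomial of degree $\floor{n/2}-m$ in $j$, so everything vanishes by the finite-difference identity $\sum_{j=0}^{\nu}(-1)^j\binom{\nu}{j}Q(j)=0$ for $\deg Q<\nu$ (cited from \cite{ruiz1996}). You instead encode the denominator through a Beta integral, recognize the remaining alternating sum as $2^nP_n(\sqrt{t})$ via the explicit monomial expansion of the Legendre polynomial, and conclude by orthogonality of $P_n$ against the polynomial weight $(1-u^2)^{m-1}u^{n-2m}$ of degree $n-2<n$ (the parity check that lets you pass from $\int_0^1$ to $\tfrac12\int_{-1}^1$ is the one step that must not be skipped, and you handled it). Your argument is more conceptual: it exposes the identity as a Legendre orthogonality statement and makes transparent why the hypotheses $1\le m\le\floor{n/2}$ are exactly what is needed (so that the weight is a genuine polynomial of degree below $n$ and the Gamma-function arguments stay positive). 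The paper's argument is more elementary and self-contained, requiring only a single algebraic identity on alternating binomial sums and no interchange of sum and integral or special-function apparatus. Both ultimately rest on a ``vanishing against lower degree'' principle, realized discretely in the paper and analytically in your version.
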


\begin{lemma}  \label{lemma_bin2}
Let  $d_k^{(n)}=(-1)^k \binom{n}{k} \binom{2n-2k}{n}/2^n$ be the coefficient of the monomial $z^{n-2k}$  in  the Legendre polynomial $P_n(z)$. Then $d_k^{(n)}$ satisfies the bound
$|d_k^{(n)}|  \leq (1+\sqrt{2})^n$.

\end{lemma}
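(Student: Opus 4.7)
The plan is to reduce the bound on an individual coefficient $|d_k^{(n)}|$ to a pointwise bound on $P_n$ at a cleverly chosen complex argument, and then to estimate that value via Laplace's integral representation. The key observation is that the signs of the $d_k^{(n)}$ alternate with $k$, namely $d_k^{(n)} = (-1)^k |d_k^{(n)}|$, so if we substitute $z = i$ into the polynomial, each monomial contributes with the same phase. Explicitly,
\[
P_n(i) = \sum_{k=0}^{\lfloor n/2 \rfloor} d_k^{(n)} i^{n-2k}
= i^n \sum_{k=0}^{\lfloor n/2 \rfloor} (-1)^k\, |d_k^{(n)}|\, (-1)^{-k}
= i^n \sum_{k=0}^{\lfloor n/2 \rfloor} |d_k^{(n)}|,
\]
so that $|P_n(i)| = \sum_k |d_k^{(n)}|$, and in particular $|d_k^{(n)}| \le |P_n(i)|$ for every admissible $k$.

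Next I would use the classical Laplace integral representation
\[
P_n(z) = \frac{1}{\pi} \int_0^\pi \bigl( z + \sqrt{z^2-1}\,\cos\phi \bigr)^n \, d\phi,
\]
which is valid for all complex $z$ with a consistent choice of the square root branch. At $z=i$ one has $z^2-1 = -2$, so choosing $\sqrt{z^2-1} = i\sqrt{2}$ gives
\[
P_n(i) = \frac{i^n}{\pi} \int_0^\pi \bigl( 1 + \sqrt{2}\cos\phi \bigr)^n \, d\phi.
\]
Taking absolute values inside the integral and using the elementary bound $|1+\sqrt{2}\cos\phi| \le 1+\sqrt{2}$ (attained at $\phi=0$), we get
\[
|P_n(i)| \le \frac{1}{\pi}\int_0^\pi \bigl(1+\sqrt{2}\bigr)^n \, d\phi = (1+\sqrt{2})^n.
\]
Combining with the first step yields $|d_k^{(n)}| \le (1+\sqrt{2})^n$ as desired.

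The only mildly delicate point is justifying the branch choice in Laplace's formula and confirming that the representation indeed reproduces the real-valued polynomial at complex arguments; this is standard (it follows by analytic continuation from the representation on $[-1,1]$) but should be stated explicitly to avoid sign ambiguities. Everything else is routine: the alternating sign of $d_k^{(n)}$ is immediate from its definition, and the pointwise bound $|1+\sqrt{2}\cos\phi|\le 1+\sqrt{2}$ is trivial. As a sanity check, the factor $(1+\sqrt{2})^n$ is exactly the expected rate, since the generating function $(1-2it+t^2)^{-1/2} = \sum_n P_n(i)t^n$ has its nearest singularity in $t$ at $|t| = \sqrt{2}-1 = 1/(1+\sqrt{2})$, consistent with the claimed exponential growth rate of the coefficients.
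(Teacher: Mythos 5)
Your proof is correct, but it takes a genuinely different route from the one in the paper. The paper's argument is purely recursive: it inserts the three-term recursion $(n+1)P_{n+1}(z)=(2n+1)zP_n(z)-nP_{n-1}(z)$ coefficient-by-coefficient to get $|a_{j+1}^{(n+1)}|\leq 2|a_j^{(n)}|+|a_{j+1}^{(n-1)}|$, and then chooses the smallest $b$ with $b^{n+1}\geq 2b^n+b^{n-1}$, i.e. $b^2=2b+1$, which gives $b=1+\sqrt{2}$. You instead exploit the alternating signs of $d_k^{(n)}$ to note that evaluating at $z=i$ aligns all phases, so $\sum_k |d_k^{(n)}| = |P_n(i)|$, and then bound $|P_n(i)|$ by $(1+\sqrt 2)^n$ via the Laplace integral representation (the branch issue you flag is harmless, since odd powers of $\cos\phi$ integrate to zero over $[0,\pi]$, so both branches give the same polynomial and analytic continuation settles it). Your version buys a slightly stronger conclusion — a bound on the full $\ell^1$ norm of the coefficient vector, not just on each coefficient — and it explains where $1+\sqrt 2$ "comes from" via the singularity of the generating function at $z=i$; the paper's version buys self-containedness, using nothing beyond the standard recursion and an elementary induction, which fits its role as a small technical appendix lemma. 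Either argument suffices for the way the lemma is used in the truncation-error estimate.
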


Continuing with the calculation, when $1 \leq s \leq {\floor{\frac{n}{2}}}$, then the contribution to the error (\ref{Error_p1}) from the large factor (third line) in (\ref{ry_int_by_parts}) sums to zero by Lemma \ref{lemma_bin1}. Since the remaining terms in (\ref{Error_p1}), (\ref{ry_int_by_parts})  are rather complicated, we simplify the result by presenting the leading order contribution to the error in the small parameters $c$, $\bar{R}$, and $c/\sqrt{c^2+\bar{R}^2}$.
The leading order contribution to the error is given by the $\|\bk\|=0$, $n=p+1$, and $j= {\floor{\frac{n}{2}}}$  term in the sum and is 
$O  \left( \frac{\left[(1+\sqrt{2})|c-x_3| \right]^{p+1}}{ \left( \sqrt{c^2+{\bar{R}}^2} \right)^p}  \right)$ for $p$ odd (when $p$ is even there is an additional factor of $c/\sqrt{c^2+{\bar{R}}^2}$ in the leading order error). In making this estimate, we have used Lemma (\ref{lemma_bin2}). When ${\floor{\frac{n}{2}}} < s$ we can no longer use Lemma (\ref{lemma_bin1}), but in this case the contribution to the sum (\ref{Error_p1})  from the third line of (\ref{ry_int_by_parts}) is at most $O  \left( \left[(1+\sqrt{2})|c-x_3| \right]^{p+1}   \right)$, which is smaller than the leading order contribution coming from the other terms. These remarks are summarized in the following:

\begin{lemma} \label{lemma_planar}
Let $\setofpanels$ be a planar disk of radius $0<\bar{R}<<1$, and let the origin of a Cartesian coordinate system $(x_1,x_2,x_3)$  be at the center of the disk, with the $x_3$ axis normal to the disk.  Let  $E_T$ (see  (\ref{Error_p1}))  be the truncation error  of the local single layer potential $\slayer_L \sigma (\bx)$ 
evaluated  by Taylor's expansion of order $p$ about the point $\mathbf{c}=(0,0,c)$, where ${\bar{R}}^2<<|c|<<{\bar{R}}$.  Assume the target point $(0,0,x_3)$  lies inside the radius of convergence of the Taylor's series, i.e., $r \leq |c|$ where $r=|x_3-c|$.  Then $E_T$ satisfies the bound
\begin{multline} \label{E_p_lemma}
E_T \leq   C  \ \alpha_p \left| \sigma({\bf 0}) \right| \frac{\left[(1+\sqrt{2}) \ r \right]^{p+1}}{ \left( \sqrt{c^2+{\bar{R}}^2} \right)^p}   \left( 1 + O \left( \frac{c^2}{c^2+{\bar{R}}^2} \right) \right)\\
+ O \left( \alpha_p \  {\bar{R}}^2 \frac{ r^{p+1}}{ \left( \sqrt{c^2+{\bar{R}}^2} \right) ^p} \sum_{\|\bk\| = 2} \left| \partial_\by^{\bk} \sigma({\bf 0}) \right|  \right) 
\end{multline}
where  $\alpha_p=1$ for $p$ odd and $c/\sqrt{c^2+{\bar{R}}^2}$ for $p$ even, and $C$ is a constant. 

\end{lemma}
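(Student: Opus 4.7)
\noindent
\textbf{Proof plan for Lemma \ref{lemma_planar}.}

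The plan is to start from the exact representation of the truncation error already obtained in the excerpt, namely equation (\ref{Error_p1}) with the $r_y$-integral evaluated via the integration-by-parts formula (\ref{ry_int_by_parts}), and then track carefully which terms survive after a delicate cancellation and which give only lower-order contributions. With $s=\|\bk\|/2+1$, the right-hand side of (\ref{ry_int_by_parts}) has two types of contributions: a sum of $s$ boundary terms evaluated at $r_y=\bar R$ (each scaling like $(c^2+\bar R^2)^{-(2n-2j-(2m-1))/2}$ times a power of $\bar R$), and the single ``origin'' term $c^{-2n+2j+2s-1}$, which is dangerous because it is large when $|c|\ll 1$.

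First I will handle the dangerous origin term. Substituting $c^{-2n+2j+2s-1}$ into (\ref{Error_p1}) and factoring out everything that does not depend on the summation index $j$, the $j$-sum becomes
\begin{equation}
\sum_{j=0}^{\lfloor n/2\rfloor}\frac{(-1)^j\binom{n}{j}\binom{2n-2j}{n}}{\prod_{i=1}^{s}[2n-2j-(2i-1)]}.
\end{equation}
Whenever $1\le s\le\lfloor n/2\rfloor$, Lemma \ref{lemma_bin1} tells us that this sum vanishes exactly, so the dangerous negative powers of $c$ produce no contribution in this regime. In the remaining regime $s>\lfloor n/2\rfloor$ (which forces $\|\bk\|\ge n-1$, so $\|\bk\|$ is large relative to the expansion order), Lemma \ref{lemma_bin1} no longer applies, but the factor $|c-x_3|^n/|c|^{2n-2j-2s+1}$ together with $|d_k^{(n)}|\le(1+\sqrt 2)^n$ from Lemma \ref{lemma_bin2} keeps the total contribution no worse than $O([(1+\sqrt 2)r]^{p+1})$, which is absorbed in the stated bound.

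Next I will analyze the boundary terms at $r_y=\bar R$ (the $m$-sum in (\ref{ry_int_by_parts})). Each such term carries the factor $\bar R^{\|\bk\|-(2m-2)}(c^2+\bar R^2)^{-(2n-2j-(2m-1))/2}$. Pulling the absolute value inside, using Lemma \ref{lemma_bin2} to bound $|d_k^{(n)}|$ by $(1+\sqrt 2)^n$, and summing the resulting geometric-type series in $n$ starting from $n=p+1$, the leading contribution comes from $\|\bk\|=0$ (i.e.\ $s=1$, $m=1$), $n=p+1$, and $j=\lfloor n/2\rfloor$, which is the index that extracts the lowest power of $c$ in $P_n(z)$. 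For $p$ odd this gives $j=(p+1)/2$, a nonzero constant prefactor and the size $[(1+\sqrt 2)r]^{p+1}/(\sqrt{c^2+\bar R^2})^{p}$; for $p$ even the lowest power of $c$ that appears is $c^1$, producing the extra factor $\alpha_p=c/\sqrt{c^2+\bar R^2}$. The $\|\bk\|=2$ correction is obtained by pushing to $s=2$ and noting an extra factor $\bar R^2$, which matches the second term on the right-hand side of (\ref{E_p_lemma}). All higher $\|\bk\|$ terms, and the subleading $j<\lfloor n/2\rfloor$ contributions, are absorbed into the $O(c^2/(c^2+\bar R^2))$ relative correction by exploiting the scaling hypothesis $\bar R^2\ll|c|\ll\bar R$.

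The main obstacle I expect is the bookkeeping in the two previous paragraphs: writing (\ref{Error_p1}) combined with (\ref{ry_int_by_parts}) as a triple sum over $(\bk,n,j,m)$, resumming in $j$ to invoke Lemma \ref{lemma_bin1} on exactly the right block of terms, and then isolating the dominant $(\bk,n,j,m)=(\mathbf 0,p+1,\lfloor(p+1)/2\rfloor,1)$ contribution while controlling the tail of the resulting series in $n$. The geometric bound $(1+\sqrt 2)^n r^n/(\sqrt{c^2+\bar R^2})^{n-1}$ converges because $r\le|c|\ll\bar R\le\sqrt{c^2+\bar R^2}$, so the ratio is small and the tail beyond $n=p+1$ only multiplies the leading term by a bounded constant, which I will absorb into $C$. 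Once this accounting is done, the bound (\ref{E_p_lemma}) follows directly, and the same machinery, differentiated once in $\by$ against the normal $\nu(\by)$, yields Theorem \ref{DL_error_nonplanar} for the double layer potential, as indicated in the paper.
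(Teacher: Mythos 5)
Your proposal is correct and follows essentially the same route as the paper's own argument: the exact representation (\ref{Error_p1}) combined with the integration-by-parts formula (\ref{ry_int_by_parts}), cancellation of the dangerous $c^{-2n+2j+2s-1}$ term via Lemma \ref{lemma_bin1} for $1\le s\le\lfloor n/2\rfloor$ (with the $s>\lfloor n/2\rfloor$ regime bounded separately), the bound of Lemma \ref{lemma_bin2} giving the $(1+\sqrt{2})^{n}$ factor, and identification of the dominant $\|\bk\|=0$, $n=p+1$, $j=\lfloor n/2\rfloor$ term with the $\|\bk\|=0$, $j=\lfloor n/2\rfloor-1$ and $\|\bk\|=2$ terms supplying the stated corrections. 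No gaps beyond the bookkeeping the paper itself leaves implicit.
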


\noindent
{\bf Proof:}   The leading order term follows from the comments preceding the lemma (the prefactor $\sigma(0)$ comes from $a_0$ in (\ref{Error_p1})).  
The next order corrections, correspond, respectively,  to the $\|\bk\|=0$, $n=p+1$, $j=\floor{n/2}-1$ term in  (\ref{Error_p1}), (\ref{ry_int_by_parts})  and the  $\|\bk\|=2$, $n=p+1$, $j=\floor{n/2}$ term there. \\

We now generalize to the case in which the surface patch $\setofpanels$ is  nonplanar. We assume that $\setofpanels$  is smooth, and that $\bar{\bx}$ is a point on $\setofpanels$ such that if  $B_c({\bc})$  is a ball of radius $c$ about the expansion center $\bc$ then $\overline{B}_c(\bc) \cap \setofpanels = \{ \bar{\bx} \}$.  
The surface patch $\setofpanels$ is assumed to be such that its projection $R_\Gamma$  onto the tangent plane at $\bar{\bx}$ is a disk of radius $\bar{R}$.
We place the  origin  $O$ of a Cartesian coordinate system at $\bar{\bx}$,  and assume the $x_3$ axis is directed along the line $\bar{\bx}-\bc$, i.e., normal to $\setofpanels$ at $\bar{\bx}$.  The situation is illustrated in Figure \ref{fig_trunc}. 


The local  single layer potential is written as
\[
\slayer_L \tilde{\sigma} (\bx)= \int_{R_\Gamma} \tilde{\sigma}(y_1,y_2)  \ G(\bx, y_1, y_2, Y(y_1,y_2) ) \ dy_1  \ dy_2.
\]
where we have parameterized the surface $\setofpanels$ by $Y(y_1,y_2)$ in which  $(y_1,y_2)$ varies over the planar region $R_\Gamma$ 
(this supposes that the surface is a graph in these coordinates). Additionally, we have introduced a modified 
density function $\tilde{\sigma}(y_1,y_2) = \sigma(y_1,y_2) 
(1+Y^2_{y_1}(y_1,y_2) + Y^2_{y_2}(y_1,y_2))^{1/2}$ which incorporates the surface element.
Following the analysis for the planar case,  we compute $\slayer_L \tilde{\sigma} (\bx)$   by expanding the Green's function 
 in a Taylor's series with respect to $\bx$  about the point $\bc=(0,0,c)$. We  assume that the parameter scaling
(\ref{scaling_b_R}) holds,
and    as before,  set $x_1=x_2=0$ and consider the target point to lie on the $x_3-$axis inside the radius of convergence for the Taylor's series. If $G^{(p)}(x_3,\by)$ denotes the $p$-term Taylor's expansion of $G(x_3, \by)$ in powers of $c-x_3$ (where now $\by=(y_1,y_2,Y(y_1,y_2)$), then the truncation error for a nonplanar surface  is 
\begin{eqnarray} 
E_T  &=&    \left|  \int_{R_\Gamma} \tilde{\sigma}(y_1,y_2) \left(  G(x_3,\by)-G^{(p)}(x_3,\by) \right) \ dy_1 \ dy_2 \right|  \label{nonplanar_truncation}\\
&=& \left| \sum_{n=p+1}^\infty  c_n (c-x_3)^n. \label{c_n} \right|
\end{eqnarray}
where the $c_n$ are real.
In the following analysis, we compute the leading order part of  $c_n$ in the small parameters $c, ~{\bar{R}}$, and $c/\sqrt{c^2+{\bar{R}}^2}$.

We rotate the $y_1$ and $y_2$ axes so that they are aligned with the directions of principal curvature of $\setofpanels$ at $\bar{\bx}$. Under our assumptions, the surface $Y(y_1,y_2)$ has a Taylor's expansion about $(0,0)$ for $r_y<{\bar{R}}$ of the form
\be \label{Y_Taylors}
Y(y_1,y_2) = b_1 y_1^2 + b_2 y_2^2 + \sum _{\|\bk\| \geq 3} b_\bk y_1^{k_1} y_2^{k_2}.
\ee
where $b_1,b_2,b_\bk$ are the Taylors coefficients and we recall $r_y=(y_1^2+y_2^2)^{1/2}$. Note that $b_1$ and $b_2$ are also the principal curvatures of the surface $\setofpanels$.  The modified density function is also expanded about $(0,0)$,  and subsequently we only consider the leading order term $\tilde{\sigma}(0,0) \equiv \tilde{\sigma}_0$, since it can be shown, as for the planar case,  that higher order terms in the density give a  higher order contribution to  the truncation error.

 Next,  substitute $c+(x_3-c) - Y$ for $x_3-Y$ in the Green's function,  factor out $D^{1/2}(r_y,x_3)$ where $D(x_3, r_y)=c^2+r_y^2-2c(c-x_3)+(c-x_3)^2$ and apply the binomial expansion to obtain
\be \label{G_expansion_nonplanar}
G(x_3,\by)= \frac{1}{4 \pi}   \frac{1}{D^{1/2}} \sum_{l=0}^{\infty} \binom{-1/2}{l} \left[ \frac{2(c-x_3)Y-2cY+Y^2}{D} \right]^l
\ee
where we define $\binom{-1/2}{0} = 1$. This expansion is justified by the assumption (\ref{scaling_b_R}).  Substitute the expansion (\ref{Y_Taylors}) for $Y$ into (\ref{G_expansion_nonplanar}) and represent the surface integral in polar coordinates as 
\be \label{single_layer_polar}
\slayer_L \tilde{\sigma}(x_3) \approx \tilde{\sigma}_0 \int_0^{2 \pi} \int_0^{\bar{R}}  G(x_3, r_y, \theta)  \ r_y  \ dr_y  \  d \theta.
\ee
where $G(x_3,r_y,\theta)$ is given by (\ref{G_expansion_nonplanar}) and $Y(r_y,\theta)$  by (\ref{Y_Taylors})  with $y_1=r_y \cos \theta$ and $y_2=r_y \sin \theta$, and we have approximated $\sigma \approx \sigma_0$.

The truncation error (\ref{nonplanar_truncation}) is calculated by expanding (\ref{G_expansion_nonplanar})  in powers 
of $c-x_3$ using (\ref{generating_function}). 
It can be shown   that the leading order contribution to the Taylor's coefficient $c_n$ in (\ref{c_n})  comes from the $l=0$ term in the  sum  (\ref{G_expansion_nonplanar}), and  the next order correction from the $l=1$ term, while the  contributions from $l=2, 3, \ldots$ are successively higher order.  
We will  calculate   the leading order $l=0$ and $1$ contributions to the truncation error.  In doing so, we can neglect the  $Y^2$  term in (\ref{G_expansion_nonplanar}) and approximate  $Y \approx b_1 y_1^2 + b_2 y_2^2$, since the neglected terms give higher order contributions to $c_n$.  
After we substitute the $l=0$ and $1$ terms from  (\ref{G_expansion_nonplanar}) into (\ref{single_layer_polar}), make the above approximations, and compute the angle integral, we obtain
for the  truncation error the expression
\be \label{l=0_and_1}
E_T \approx \frac{\sigma_0}{2}  \left| \int_0^{\bar{R} } \left[ \tilde{G} (x_3, r_y) - \tilde{G}^{(p)}(x_3, r_y) \right] r_y \ dr_y \right|,
\ee
where
\be \label{Gtil_def}
\tilde{G}(x_3, r_y)=  \frac{1}{D^{1/2}(x_3, r_y)} + \frac{\left( c+(x_3-c) \right) {\cal H} r_y^2}{D^{3/2}(x_3, r_y)},
\ee
and  ${\cal H}=(b_1+b_2)/2$ is the mean curvature of $\setofpanels$ at $\bar{\bx}$. 
Here``$\approx$'' means leading order in the sense of the largest contribution to the to the Taylor's coefficients $c_n$  in our small parameters.

We still need to expand (\ref{l=0_and_1}), (\ref{Gtil_def}) in powers of $c-x_3$.
The  first  term in (\ref{Gtil_def})  is identical to one that arises in the single layer potential  for a planar surface with constant density, and is expanded as above. 
The second  term in (\ref{Gtil_def}) is  integrated by parts. This gives a boundary contribution at $r_y={\bar{R}}$ and an integral term:
\[
\tilde{\sigma}_0 {\cal H} \left( c+ (x_3-c) \right)  \cdot \left( -\frac{{\bar{R}}^2}{2 D^{1/2}(x_3, {\bar{R}})} + \int_0^{\bar{R}} \frac{r_y}{D^{1/2}(x_3, r_y)} \ d r_y \right).
\]  The first term above is Taylor expanded in   $c-x_3$ using (\ref{generating_function}), after factoring out $(c^2+{\bar{R}}^2)^{1/2}$.  The second (integral) term is again identical to one which arises in the planar analysis, and is treated the same as there.  We leave the details to the reader, and summarize the result:
\begin{theorem} \label{Ep_nonplanar}
Let  $\setofpanels$ be a smooth surface and $\bar{\bx}$ a point on the surface such that the projection $R_\Gamma$ of $\setofpanels$ onto the tangent plane at $\bar{\bx}$ is a disk of radius ${\bar{R}}$.  Let $\bc$ be an expansion center with $\overline{B}_c(\bc) \cap \setofpanels = \left\{ \bar{\bx} \right\}$. Let the origin of a Cartesian coordinate system $(x_1,x_2,x_3)$ be at $\bar{\bx}$, and let the $x_3$ axis be directed along the line $\bar{\bx}-\bc$.   Let  $E_T$ be the truncation error  defined in Lemma \ref{lemma_planar} for center point $\bc=(0,0,c)$, and assume ${\bar{R}}^2<<|c|<<{\bar{R}}<<1$. Then for any target point $(0,0,x_3)$ inside the radius of convergence of the Taylor's series, $E_T$ satisfies the bound
\begin{multline}
E_T \leq   C \ \alpha_{p} \left| \sigma({\bf 0}) \right| \frac{\left[(1+\sqrt{2}) \ r \right]^{p+1}}{ \left( \sqrt{c^2+{\bar{R}}^2} \right)^p}   \left( 1 + O \left( \frac{c^2}{c^2+{\bar{R}}^2} \right) \right)\\
+ O \left( \alpha_{p+1}  \ |\sigma({\bf 0})| \  {\cal H} {\bar{R}}^2 \frac{ r^{p+1}}{ (\sqrt{c^2+{\bar{R}}^2})^{p+1}} \right)  \label{SL_trunc_nonplanar}
\end{multline}
where $\alpha_p$ and $r$  are as in Lemma \ref{lemma_planar},  ${\cal H}$ is the mean curvature of $\setofpanels$  at $\bar{\bx}$, and $C$ is a constant.

\end{theorem}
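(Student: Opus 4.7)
The plan is to follow the outline sketched in the excerpt and reduce the nonplanar case to the planar case treated in Lemma \ref{lemma_planar}, with a curvature-generated correction. First I would parameterize the surface patch $\setofpanels$ as a graph $Y(y_1,y_2)$ over its tangent-plane projection $R_\Gamma$, absorb the surface element into a modified density $\tilde{\sigma}$, and rotate the $(y_1,y_2)$ axes to align with the principal curvature directions at $\bar{\bx}$, so that $Y=b_1 y_1^2+b_2 y_2^2+O(r_y^3)$. Under the scaling $\bar{R}^2\ll |c|\ll \bar{R}\ll 1$, the quantity $[2(c-x_3)Y-2cY+Y^2]/D$ is small, so the binomial expansion (\ref{G_expansion_nonplanar}) of $G(x_3,\by)$ about $\bc$ converges, and each $l$-term contributes successively higher powers of $\bar{R}^2/\sqrt{c^2+\bar{R}^2}$ to the Taylor coefficients $c_n$ in $c-x_3$. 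I would then verify that only $l=0$ and $l=1$ matter at leading order, and that keeping only $\tilde{\sigma}(0,0)$ and only the quadratic part of $Y$ is similarly justified (higher-order Taylor terms of $\sigma$ and of $Y$ each pick up additional factors of $\bar{R}$ in the angular/radial integration and thus drop to subleading order).

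Next I would treat the two surviving pieces. The $l=0$ piece is exactly the single-layer integrand over a planar disk treated in Lemma \ref{lemma_planar}, so its contribution to $E_T$ is bounded by the first line of (\ref{E_p_lemma}). For the $l=1$ piece, the angular integral collapses the quadratic surface perturbation to the mean curvature: $\int_0^{2\pi}(b_1\cos^2\theta+b_2\sin^2\theta)\,d\theta=2\pi\,{\cal H}$, yielding the extra term $(c+(x_3-c)){\cal H}r_y^2/D^{3/2}$ in $\tilde G$. I would integrate by parts in $r_y$ to rewrite $\int_0^{\bar R}r_y^3\,D^{-3/2}\,dr_y$ as a boundary term at $r_y=\bar R$ plus $\int_0^{\bar R}r_y\,D^{-1/2}\,dr_y$. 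The latter integral is, up to a trivial factor, again of the planar single-layer form and is handled exactly as in Lemma \ref{lemma_planar} (apply (\ref{generating_function}), do the integration-by-parts chain analogous to (\ref{ry_int_by_parts}), and invoke Lemma \ref{lemma_bin1} to annihilate the dangerous $c^{-n+2s-1}$ terms, with Lemma \ref{lemma_bin2} bounding the Legendre coefficients by $(1+\sqrt 2)^n$).

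The boundary contribution at $r_y=\bar R$ is expanded via the generating function after factoring out $(c^2+\bar R^2)^{1/2}$. Because it carries an extra factor of $\bar R^2$ and a factor of ${\cal H}$, and because the leading-order Legendre coefficient picks up an extra $c/\sqrt{c^2+\bar R^2}$ when $p+1$ is odd versus even, this piece produces precisely the second term of (\ref{SL_trunc_nonplanar}), with $\alpha_{p+1}$ replacing $\alpha_p$ (the parity flip coming from the fact that the $r_y^2$ prefactor shifts which power of $c-x_3$ is leading at order $p+1$). Summing the $l=0$ contribution with this curvature correction gives the claimed bound.

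The main obstacle I anticipate is the bookkeeping: precisely tracking which terms in the joint expansion in $(c,\bar R,c/\sqrt{c^2+\bar R^2})$ are leading versus subdominant, in particular confirming that $l\geq 2$ in (\ref{G_expansion_nonplanar}), higher Taylor terms of $Y$ (the $b_{\bk}$ with $\|\bk\|\geq 3$), and higher Taylor terms of $\tilde{\sigma}$ all give contributions to $c_n$ that are smaller than the two terms retained. This requires the scaling hypothesis (\ref{scaling_b_R}) in an essential way; without $\bar R\ll 1$ and $\bar R^2\ll |c|$, the $l$-th term in the binomial sum would not decay, and the integration-by-parts argument that transfers the $r_y^3$ numerator into a benign form would lose its edge.
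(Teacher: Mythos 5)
Your proposal is correct and follows essentially the same route as the paper's Appendix~B argument: graph parameterization with the surface element absorbed into $\tilde{\sigma}$, rotation to principal curvature directions, the binomial expansion (\ref{G_expansion_nonplanar}) keeping only $l=0,1$, reduction of the $l=0$ piece to Lemma \ref{lemma_planar}, the angular integral producing ${\cal H}$, and integration by parts of the $r_y^3 D^{-3/2}$ term into a boundary piece (expanded via the generating function after factoring out $(c^2+\bar{R}^2)^{1/2}$) plus a planar-type integral handled with Lemmas \ref{lemma_bin1} and \ref{lemma_bin2}. The bookkeeping you flag (subleading nature of $l\geq 2$, higher-order terms of $Y$ and $\tilde{\sigma}$) is likewise asserted rather than fully detailed in the paper, so your treatment is at the same level of rigor.
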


\noindent

Note that the leading order truncation error for the single layer potential is a factor of ${\bar{R}}$ smaller than the truncation error of the double layer potential in Theorem \ref{DL_error_nonplanar}.  
The truncation error estimates have been derived assuming the scaling (\ref{scaling_b_R}), but  they are expected to hold for $|c|/{\bar{R}}$ sufficiently small that    the expansions above are  valid,
e.g. if $|c|/\bar{R}$ tends to a small constant as the panel size $h \rightarrow 0$.

\section*{Appendix C: Proofs of Lemmas \ref{lemma_bin1} and \ref{lemma_bin2}}

The proof of Lemma \ref{lemma_bin1} makes use of the following identity 
from Corollary 2 in  \cite{ruiz1996}:
\be \label{bin_identity}
\sum_{j=0}^n (-1)^j \binom{n}{j} Q(j) =0,
\ee
where $Q(z)$ is any polynomial of degree less than $n$, and $n>0$.
We first prove the lemma  for  the special case $m=\floor{n/2}$. Write out the binomial coefficients, cancel  common factors,  and factor out a power of $2$ to obtain
\begin{eqnarray}
\sum_{j=0}^{\floor{\frac{n}{2}}} \left\{ \frac{(-1)^j \binom{n}{j} \binom{2n-2j}{n}}{\prod_{i=1}^{\floor{n/2}} [2n-2j-(2i-1)]} \right\} \nonumber
&=&
2^{\ceil{n/2}} \sum_{j=0}^{\floor{n/2}} \frac{(-1)^j}{j!  (\floor{n/2}-j)!}, \nonumber
\\
&=& \frac{2^{\ceil{n/2}}}{\floor{n/2}!} \sum_{j=0}^{\floor{n/2}} \binom{\floor{n/2}}{j}(-1)^j,  \nonumber\\
&=& 0, \nonumber
\end{eqnarray}
where $\ceil{\cdot}$ is the ceiling function, and in the last equality we have used the identity (\ref{bin_identity}) with $Q(z)=1$.
To prove the lemma for $1 \leq m < \floor{n/2}$, we write
\begin{eqnarray}
\sum_{j=0}^{\floor{\frac{n}{2}}} \left\{ \frac{(-1)^j \binom{n}{j} \binom{2n-2j}{n}}{\prod_{i=1}^{m} [2n-2j-(2i-1)]} \right\}
&=&
\sum_{j=0}^{\floor{\frac{n}{2}}} \left\{ \frac{(-1)^j \binom{n}{j} \binom{2n-2j}{n}}{\prod_{i=1}^{\floor{n/2}} [2n-2j-(2i-1)]} \right\} \\
&~& ~~~~~\times
\prod_{i=m+1}^{\floor{n/2}} (2n-2j-(2i-1)), \nonumber
 \\
&=&
\frac{2^{\ceil{n/2}}}{\floor{n/2}!} \sum_{j=0}^{\floor{n/2}} \binom{\floor{n/2}}{j}(-1)^j Q_{\floor{n/2}-m}(j), \nonumber \\
&=&
0 \nonumber
\end{eqnarray}
where $Q_{\floor{n/2}-m}(j)$ is a polynomial of degree $\floor{n/2}-m$ in $j$.   The last identity follows from (\ref{bin_identity}).

We next consider the proof of Lemma \ref{lemma_bin2}.  The $n$th degree Legendre polynomial is 
\[
P_n(z)=\frac{1}{2^n} \sum_{k=0}^{\floor{n/2}} (-1)^k \binom{n}{k}
\binom{2n-2k}{n} z^{n-2k}.
\]
Let $a_j^{(n)}$ be the coefficient of the monomial $z^j$ in $P_n (z).$ 
We seek a uniform in $j$ bound on the magnitude of the coefficients $a_j^{(n)}$, that is,  we find a constant $b \geq 1$ such that $| a_j^{(n)} | \leq b^n$ for any $n$ and $0 \leq j  \leq n$.  To do this, we make use of the recursion formula
\[
(n+1) P_{n+1}(z) = (2n+1) z P_n(z) - n P_{n-1}(z).
\]
for $n \geq 1$ with $P_0(z)=1$, from which it is easy to see that 
\[
\left| a_{j+1}^{(n+1)} \right| \leq 2 \left| a_j^{(n)} \right| + \left| a_{j+1}^{(n-1)} \right|, ~~\mbox{for} ~0 \leq j \leq n,
\]
and $| a_{0}^{(n+1)} | \leq | a_{0}^{(n-1)}  |.$
To find a suitable (smallest) $b$, set $b^{n+1}=2 b^n + b^{n-1}$, which gives
$b=1 + \sqrt{2}$.


\bibliographystyle{plain} 
\bibliography{qbx_refFeb26}

\end{document}